\theoremstyle{plain}
\newtheorem{theorem}{Theorem}[section]
\newtheorem{lemma}[theorem]{Lemma}
\theoremstyle{definition}
\newtheorem{definition}[theorem]{Definition}
\newtheorem{remark}[theorem]{Remark}
\numberwithin{equation}{section}
\let\orgdescriptionlabel\descriptionlabel
\renewcommand*{\descriptionlabel}[1]{%
	\let\orglabel\label
	\let\label\@gobble
	\phantomsection
	\edef\@currentlabel{#1}%
	\let\label\orglabel
	\orgdescriptionlabel{#1}%
}
\numberwithin{equation}{section}
\DeclareMathOperator{\data}{\textbf{data}}
\DeclareMathOperator{\diam}{diam}
\DeclareMathOperator{\tr}{tr}
\def\O{\Omega}
\newcommand{\R}{\mathbb{R}}
\newcommand{\norm}[1]{\left\lVert#1\right\rVert}
\newcommand{\inner}[2]{\left\langle #1, #2 \right\rangle}
\newcommand{\I}{\int\limits}
\begin{document}
	
\title[]{Global regularity results for a class of singular/degenerate fully nonlinear elliptic equations}

\everymath{\displaystyle}

%    Information for first author
\author{Sumiya Baasandorj}
\address{Department of Mathematical Sciences, Seoul National University, Seoul 08826, Korea}
\email{summa2017@snu.ac.kr}

\author{Sun-Sig Byun}
\address{Department of Mathematical Sciences and Research Institute of Mathematics,
Seoul National University, Seoul 08826, Korea}
\email{byun@snu.ac.kr}

\author{Ki-Ahm Lee}
\address{Department of Mathematical Sciences and Research Institute of Mathematics,
	Seoul National University, Seoul 08826, Korea.}
\email{kiahm@snu.ac.kr}

\author{Se-Chan Lee}
\address{Research Institute of Mathematics,
	Seoul National University, Seoul 08826, Korea.}
\email{dltpcks1@snu.ac.kr}
%\thanks{XXX2}

%\thanks{}

\begin{abstract}
	We provide the Alexandroff-Bakelman-Pucci estimate and global $C^{1, \alpha}$-regularity for a class of singular/degenerate fully nonlinear elliptic equations. We also derive the existence of a viscosity solution to the Dirichlet problem with the associated operator.
\end{abstract}

\keywords{Singular/degenerate fully nonlinear equations; Global regularity; Comparison principle}
\subjclass[2010]{Primary  35B65; Secondary 35J60, 35J70, 35D40.}
\maketitle
\tableofcontents

\section{Introduction}\label{intro}
In this paper, we are concerned with the existence and global regularity results for viscosity solutions of a class of singular/degenerate fully nonlinear equations of the form
\begin{align}
	\label{me}
		\left\{\begin{array}{rclcc}
		\Phi(x,|Du|)F(D^2u) &=& f(x) & \text{in }& \O, \\
		u(x)&=&g(x) & \text{on }& \partial\O,
		\end{array}\right.	  
	\end{align}
where $F : \mathcal{S}(n)\rightarrow \R$ is a uniformly $(\lambda,\Lambda)$-elliptic operator in the sense of \ref{a1}, $\Phi : \Omega\times [0,\infty)\rightarrow [0,\infty)$ is a continuous map featuring degeneracy and singularity for the gradient described as in \ref{a2}, $f(\cdot)$ and $g(\cdot)$ are suitable regular functions in the sense of \ref{a3}, and $\Omega$ is a $C^{1,1}$-domain as in \ref{a4}. We recall that, as a consequence of Krylov-Safonov theory \cite{CC1}, viscosity solutions to the homogeneous equation
\begin{align*}
F(D^{2}u)=0 \quad \text{in $B_{1}$}, \quad \text{where $F$ is uniformly $( \lambda, \Lambda)$-elliptic,}
\end{align*}  
belong to $C^{1, \overline{\alpha}}_{\mathrm{loc}}(B_{1})$ for a universal constant $\overline{\alpha} \equiv \overline{\alpha}(n, \lambda, \Lambda) \in (0,1)$.

Some special cases of \eqref{me}, which are  singular or degenerate PDEs in non-divergence structure, have been widely studied in the past years. To be precise, the local $C^{1, \alpha}$-regularity results for degenerate fully nonlinear equations were developed in \cite{ART1, IS1} for $\Phi(x,t)=t^{p}$ with $p \geq 0$, in \cite{SR1, De1} for $\Phi(x,t)=t^{p}+a(x)t^{q}$ with $0 \leq p \leq q$,  in \cite{BPRT1} for $\Phi(x,t)=t^{p(x)}$ with $\inf p(\cdot)>-1$, and in \cite{dSJRR1, FRZ1} for $\Phi(x,t)=t^{p(x)}+a(x)t^{q(x)}$ with $0 \leq p(\cdot) \leq q(\cdot)$. On the other hand, comparison principle, Liouville type results, and the ABP estimate are found mostly for $\Phi(x,t)=t^{p}$ with $-1<p<0$; we refer to \cite{BD1, BD2, DFQ1, DFQ2, Im1} for details.  Finally, for both singular and degenerate general operators which are considered in this paper, the local $C^{1, \alpha}$-regularity with the optimality was shown by the authors \cite{BBLL1}. Global counterpart of such local regularity results can be found in \cite{BD4} for $\Phi(x,t)=t^{p}$ with $p \geq 0$ and in \cite{dSJRR1} for $\Phi(x,t)=t^{p(x)}+a(x)t^{q(x)}$ with $0 \leq p(\cdot) \leq q(\cdot)$. We remark that global regularity of viscosity solutions to \eqref{me} plays an essential role in proving the solvability of the Dirichlet problem and in investigating obstacle problems; see \cite{SV2, SV1, dSJRR1}.

The goal of this paper is to investigate the global regularity, involving the ABP estimate and $C^{1, \alpha}$-estimate up to the boundary, for both singular and degenerate fully nonlinear elliptic equations in a unified way. Our first main theorem in this paper reads as follows:
\begin{theorem}[Global $C^{1, \alpha}$-regularity] \label{thm:main}
	Suppose the assumptions \ref{a1}-\ref{a4} (to be described in the next section) are in force. Let $\alpha$ be chosen to satisfy
	\begin{align} \label{alpha}
	\alpha \in
	\begin{cases}
	 (0, \overline{\alpha}) \cap \left(0, \frac{1}{1+s(\Phi)} \right] \cap (0, \beta_g) \ &\text{if $i(\Phi) \geq 0$},\\
	(0, \overline{\alpha}) \cap \left(0, \frac{1}{1+s(\Phi)-i(\Phi)} \right] \cap (0, \beta_g) \ &\text{if $-1<i(\Phi) <0$}.
	\end{cases}
	\end{align} 
	For any viscosity solution $u$ of
	\begin{align*}
	\begin{cases}
	\Phi(x, |Du|)F(D^2u)=f \ &\text{in $\Omega$},\\
	u=g \ &\text{on $\partial \Omega$},
	\end{cases}
	\end{align*}
	there exists a constant $c \equiv c(n, \lambda, \Lambda, i(\Phi), L, \alpha)$ such that $u \in C^{1,\alpha}(\overline{\Omega})$ and 
	\begin{align*}
	\|u\|_{C^{1,\alpha}(\overline{\Omega})} \leq c\left(1+\norm{u}_{L^{\infty}(\Omega)} + \|g\|_{C^{1, \beta_{g}}(\partial \Omega)} + \norm{f/\nu_{0}}_{L^{\infty}(\Omega)}^{\frac{1}{1+i(\Phi)}}\right).
	\end{align*}
\end{theorem}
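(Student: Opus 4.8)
The plan is to combine the interior $C^{1,\alpha}$-estimate, which for the general operators considered here is already available from \cite{BBLL1}, with a boundary $C^{1,\alpha}$-estimate at points of $\partial\Omega$, and then to patch the two together by a covering argument; the new content is the boundary estimate, so I concentrate on it. Fix $x_{0}\in\partial\Omega$. Using assumption \ref{a4} I would flatten $\partial\Omega$ near $x_{0}$ by a $C^{1,1}$-diffeomorphism (alternatively, exploit the uniform interior and exterior ball conditions of a $C^{1,1}$-domain and work with the tangent hyperplane $T_{x_{0}}\partial\Omega$ directly); under such a change of variables the equation keeps the structural form \ref{a1}--\ref{a2} up to a first-order perturbation with bounded, in fact Lipschitz, coefficients, which is harmless for what follows. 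Subtracting a $C^{1,\beta_{g}}$-extension of $g$ from $u$ and rescaling, we reduce to a normalized viscosity solution $v$ with $\|v\|_{L^{\infty}(B_{1}^{+})}\le 1$, $v=0$ on $B_{1}\cap\{x_{n}=0\}$, right-hand side as small as needed, and the goal becomes to produce a number $a$ with $|a|\le C$ and $\sup_{B_{r}^{+}}|v-ax_{n}|\le C\,r^{1+\alpha}$.

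The boundary estimate itself I would prove by a geometric improvement of oscillation: construct $\rho\in(0,1)$ universal and slopes $a_{k}$ with $|a_{k+1}-a_{k}|\le C\rho^{k\alpha}$ and $\sup_{B_{\rho^{k}}^{+}}|v-a_{k}x_{n}|\le \rho^{k(1+\alpha)}$. The inductive step is carried out on the rescaling $v_{k}(x)=\rho^{-k(1+\alpha)}\big(v(\rho^{k}x)-a_{k}\rho^{k}x_{n}\big)$, which has unit sup-norm on $B_{1}^{+}$, vanishes on $\{x_{n}=0\}$, and solves a rescaled equation whose weight and right-hand side absorb exactly the scaling factors encoded in \eqref{alpha}. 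One then splits into two regimes according to the size of the slope relative to the perturbation scale $\rho^{k\alpha}$. If $|a_{k}|$ is large, then on the unit scale the gradient of $v_{k}$ plus the small gradient of the subtracted data stays bounded away from $0$, so $\Phi$ is comparable to a fixed positive constant; dividing it out turns the equation into a genuinely uniformly elliptic one, and by a boundary stability/compactness lemma $v_{k}$ is close in $B_{1/2}^{+}$ to a solution $h$ of a homogeneous uniformly elliptic equation $\mathcal{F}(D^{2}h)=0$ with $h=0$ on $\{x_{n}=0\}$. The Krylov boundary $C^{1,\overline{\alpha}}$-estimate then places $h$ within $\tfrac12\rho^{1+\alpha}$ of some $bx_{n}$ on $B_{\rho}^{+}$ once $\rho$ is small, using $\alpha<\overline{\alpha}$, which closes the step with $a_{k+1}=a_{k}+\rho^{k\alpha}b$. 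If instead $|a_{k}|$ is small, then $|Dv|$ is small on the corresponding scale, so $v$ is already within the prescribed rate of the affine function $a_{k}x_{n}$ (a small gradient forces a function vanishing on the flat boundary to be close to a linear function of $x_{n}$), and one keeps $a_{k+1}=a_{k}$; this regime is exactly where the restrictions $\alpha\le\frac{1}{1+s(\Phi)}$, respectively $\alpha\le\frac{1}{1+s(\Phi)-i(\Phi)}$, and the exponent $\frac{1}{1+i(\Phi)}$ on $\|f/\nu_{0}\|_{L^{\infty}}$ arise, from the scaling of $\Phi$ near $t=0$ and the solvability scaling of $\Phi(x,|Du|)F(D^{2}u)=f$.

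Finally, with pointwise $C^{1,\alpha}$-estimates in hand both in the interior (from \cite{BBLL1}) and at every boundary point, with constants depending only on $n,\lambda,\Lambda,i(\Phi),L,\alpha$, a standard covering and interpolation argument — estimating a pair of points by the boundary estimate when one of them is close to $\partial\Omega$ and by the interior estimate otherwise — upgrades everything to the global bound for $\|u\|_{C^{1,\alpha}(\overline{\Omega})}$ claimed in the theorem.

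I expect the main obstacle to be the coupling of the singular/degenerate scaling with the boundary geometry: one must produce boundary barriers and a boundary compactness lemma that remain compatible with the weight $\Phi$, so that the non-degenerate/degenerate dichotomy and the two-regime iteration survive near $\partial\Omega$ with uniform constants, and, in the flattening approach, one must check that the first-order perturbation introduced by the $C^{1,1}$-change of variables neither violates \ref{a1}--\ref{a2} nor worsens the admissible range of $\alpha$ in \eqref{alpha}. Controlling the transition between the two regimes at each scale of the boundary iteration is the delicate technical point.
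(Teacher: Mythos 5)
Your high-level skeleton (boundary iteration by affine approximations, compactness to a uniformly elliptic limit, a Krylov-type boundary $C^{1,\overline{\alpha}}$ estimate, then a covering argument with the interior estimate of \cite{BBLL1}) is the same as the paper's, but two of your opening reductions do not work as stated. You cannot subtract a $C^{1,\beta_g}$ extension of $g$: such an extension has no Hessian bound, so $u$ minus the extension no longer solves an equation of the form \ref{a1}--\ref{a2}; only affine functions can be subtracted without destroying the structure. This is exactly what the paper does: in \Cref{keylemma} the iteration uses $l_k(y)=a_k+b_k\cdot(y-x)$, keeps $g$ as the (rescaled) boundary datum with $\|g_k\|_{C^{1,\beta_g}}\le 1$, and imports the boundary $C^{1,\alpha}$ regularity for the homogeneous uniformly elliptic limit problem from \cite{AS1,LZ1}, which is where $\alpha<\beta_g$ enters. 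Relatedly, flattening by a $C^{1,1}$ diffeomorphism makes the second-order coefficients $x$-dependent (only Lipschitz), leaving the class \ref{a1}; the paper avoids this by working directly on the graph $\{y_n>\phi(y')\}$ and using that $\phi_k(y')=\rho^{-k}\phi(\rho^k y')$ flattens under the rescaling.

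The substantive gaps are in the iteration itself and in the compactness it needs. In your small-slope regime, ``$|a_k|$ small implies $|Dv|$ small'' does not follow from anything you have, and even granting a small gradient you have not shown the improved decay $\sup_{B_\rho^+}|v-a_kx_n|\le\rho^{1+\alpha}$ --- that improvement is precisely what has to be proved, and in the degenerate setting it requires an approximation argument, not just the boundary condition. The paper dispenses with the slope dichotomy at this stage: its approximation lemma (\Cref{approxlem}) covers \emph{every} translation vector $\xi$ via a contradiction/compactness argument, splitting the contradicting sequence into bounded and unbounded $\{\xi_k\}$, and the compactness rests on uniform boundary Lipschitz estimates proved in Section 4 by distance-function barriers, the ad hoc comparison principle \Cref{specialCP}, and the Ishii--Lions method (\Cref{basicreg} for $\xi=0$; \Cref{basicreg2} for large $|\xi|$, valid only when $i(\Phi)\ge0$). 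You explicitly defer these barriers and the boundary compactness lemma as ``the main obstacle,'' but they are the core of the boundary estimate, not a technicality. Finally, the singular range $-1<i(\Phi)<0$ is not addressed: a large-slope/small-slope scheme modeled on the degenerate case does not go through there (the large-$|\xi|$ Lipschitz estimate is exactly the step restricted to $i(\Phi)\ge0$), whereas the paper first obtains a boundary Lipschitz bound from \Cref{basicreg} and then rewrites the equation with $\tilde{\Phi}(y,t)=t^{-i(\Phi)}\Phi(y,t)$ and $\tilde{f}=|Du|^{-i(\Phi)}f$, so that $i(\tilde{\Phi})=0$ and the degenerate argument applies; some such mechanism is missing from your proposal.
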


The second main theorem concerning the solvability of the Dirichlet problem follows from \Cref{thm:main} together with Perron's method.
\begin{theorem}[Existence of viscosity solution]
	\label{thm:mthm1}
	Suppose the assumptions \ref{a1}-\ref{a4} and \ref{a5} (to be stated in \Cref{Perron}) are in force. Then there exists a viscosity solution $u\in C(\overline{\O})$ of \eqref{me}.
\end{theorem}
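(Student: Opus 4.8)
The plan is to prove \Cref{thm:mthm1} by Perron's method. Its two substantive inputs are a comparison principle for the operator $v \mapsto \Phi(x,|Dv|)F(D^2v) - f(x)$ — the content of assumption \ref{a5}, whose verification for these non-proper (degenerate/singular) operators is exactly where the a priori $C^{1,\alpha}$-information of \Cref{thm:main} enters, reducing comparison, on the region where a subsolution and a supersolution nearly touch, to comparison for a uniformly elliptic equation with the weight $\Phi$ frozen near the common gradient — and the construction of boundary barriers matched to $g$. So I would first build the barriers, then set up the Perron envelope, then read off continuity up to $\partial\Omega$.

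\textbf{Barriers.} Fix $\xi \in \partial\Omega$. By the $C^{1,1}$-regularity of $\Omega$ (\ref{a4}) there is a uniform exterior-sphere radius $R$ and a ball $B_R(y_\xi)$ with $\overline{B_R(y_\xi)}\cap\overline{\Omega} = \{\xi\}$. I would take a supersolution barrier of the form
\[
w_\xi^+(x) = g(\xi) + Dg(\xi)\cdot(x-\xi) + \Psi_\xi(x) \quad\text{on } \Omega\cap B_{r_0}(\xi),
\]
where $\Psi_\xi \ge 0$ is a concave correction vanishing at $\xi$ — a large multiple of $R^{-\tau} - |x-y_\xi|^{-\tau}$ for $\tau$ large, combined with a power of $\dist(x,\partial\Omega)$ if needed to absorb the $C^{1,\beta_g}$-defect of $g$ from \ref{a3}. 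On $\Omega\cap B_{r_0}(\xi)$ one then has $\mathcal{M}^+_{\lambda,\Lambda}(D^2 w_\xi^+)\le -c<0$ for Pucci's maximal operator, while the structure of $\Phi$ near $t=0$ and $t=\infty$ from \ref{a2} controls $\Phi(x,|Dw_\xi^+|)$ so that $\Phi(x,|Dw_\xi^+|)\,\mathcal{M}^+_{\lambda,\Lambda}(D^2 w_\xi^+)\le f$ there; hence $w_\xi^+$ is a supersolution, it dominates $g$ on $\partial\Omega\cap B_{r_0}(\xi)$, and gluing it with a large constant produces a global supersolution. A symmetric construction gives a subsolution barrier $w_\xi^-$. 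By uniformity of $R$ and of the $C^{1,\beta_g}$-norm of $g$, all constants are independent of $\xi$ and $w_\xi^\pm(x)\to g(\xi)$ as $x\to\xi$ uniformly in $\xi$.

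\textbf{Perron envelope.} Set $\overline{u} := \inf_\xi w_\xi^+$ and $\underline{u} := \sup_\xi w_\xi^-$; these are a super- and a subsolution with $\underline{u}\le g\le\overline{u}$ on $\partial\Omega$, and $\underline{u}\le\overline{u}$ on $\overline{\Omega}$ by comparison. Define
\[
u(x) := \sup\bigl\{\, v(x) : v\in\mathrm{USC}(\overline{\Omega}) \text{ is a subsolution in } \Omega,\ \underline{u}\le v\le\overline{u} \,\bigr\},
\]
so $\underline{u}\le u\le\overline{u}$. The standard stability machinery then applies: the supremum of a family of subsolutions bounded above by a fixed supersolution has upper semicontinuous envelope $u^*$ a subsolution, and the usual bump construction shows that if the lower semicontinuous envelope $u_*$ failed to be a supersolution at an interior point, a small perturbation of $u$ near that point would stay in the admissible class and exceed $u$, contradicting maximality; hence $u_*$ is a supersolution. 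The only departure from the textbook argument is the singular regime $-1<i(\Phi)<0$: one must use throughout the notion of viscosity solution adapted to the weight (discarding test functions with vanishing gradient at the contact point) and check that the bump and gluing constructions respect it, the joint continuity of $\Phi$ in $(x,t)$ up to $t=0$ in \ref{a2} being what makes this harmless. Applying the comparison principle of \ref{a5} to $u^*$ and $u_*$ gives $u^*\le u_*$, so $u=u^*=u_*$ is continuous in $\Omega$ and a viscosity solution of $\Phi(x,|Du|)F(D^2u)=f$ there.

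\textbf{Boundary values and the main difficulty.} For $\xi\in\partial\Omega$ the inequalities $w_\xi^-\le u\le w_\xi^+$ hold near $\xi$ by construction, and since $w_\xi^\pm(x)\to g(\xi)$ as $x\to\xi$ uniformly in $\xi$, we get $\lim_{x\to\xi}u(x)=g(\xi)$; thus $u$ extends to an element of $C(\overline{\Omega})$ with $u|_{\partial\Omega}=g$, i.e.\ $u$ solves \eqref{me}. (At this stage \Cref{thm:main} upgrades $u$ further to $C^{1,\alpha}(\overline{\Omega})$.) I expect the genuine obstacle to be the comparison principle underlying \ref{a5}: for degenerate or singular $\Phi$ the equation is not proper and comparison can fail without extra structure, so closing the doubling-of-variables argument forces one to exploit the $C^{1,\alpha}$-regularity supplied by \Cref{thm:main}; once that is in hand, the barriers and the Perron iteration are routine, modulo the bookkeeping needed to treat the singular range of exponents uniformly with the degenerate one.
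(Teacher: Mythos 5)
There is a genuine gap at the heart of your plan: you run Perron's method directly on the original equation $\Phi(x,|Du|)F(D^2u)=f$ and then invoke ``the comparison principle of \ref{a5}'' to identify $u^*$ with $u_*$. But no comparison principle is available for the original operator, and \ref{a5} does not supply one: the map $H(x,r,p,X)=\Phi(x,|p|)F(X)-f(x)$ is only \emph{nonincreasing} (in fact constant) in $r$ and genuinely depends on $x$, which is precisely the situation where comparison can fail; \ref{a5} is merely a structure condition in the $x$-variable of the type in \cite{CIL1}, and in the doubling-of-variables argument it only produces the inequality $0\le f(y_\alpha)-f(x_\alpha)+\omega(\alpha|x_\alpha-y_\alpha|^2+|x_\alpha-y_\alpha|)$, which yields no contradiction without a strictly monotone zeroth-order term. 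Your suggested remedy --- that the comparison step can be closed by using the $C^{1,\alpha}$-regularity of \Cref{thm:main} to freeze $\Phi$ near the common gradient --- is not substantiated and cannot work as stated: the competitors in the Perron class (and the envelopes $u^*$, $u_*$) are arbitrary semicontinuous sub/supersolutions, not solutions, so the a priori estimate of \Cref{thm:main} does not apply to them, and the sketch never actually closes the argument. This is not a bookkeeping issue in the singular range; the Perron identification step collapses for all $i(\Phi)>-1$.

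The way the paper proceeds (and the way to repair your proof) is to restore strict properness by approximation: consider $H_\varepsilon:=\Phi(x,|Du|)F(D^2u)-f(x)-\varepsilon u$ as in \eqref{AE:1}, for which \ref{a1}, \ref{a2}, \ref{a5} do give comparison (\Cref{thm_CP}, where the term $\varepsilon[v(x_\alpha)-w(y_\alpha)]\ge \varepsilon L_0/2$ produces the contradiction), build the exterior-sphere barriers for the $\varepsilon$-problem with bounds uniform in $\varepsilon$ (\Cref{thm_exist}; your barrier construction is essentially this one), and apply Perron's method to obtain $u_\varepsilon$ solving \eqref{AE:1} with $u_\varepsilon=g$ on $\partial\Omega$. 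Regularity then enters only at the final stage, not in the comparison step: rewriting the equation for $u_\varepsilon$ with right-hand side $f_\varepsilon=f+\varepsilon u_\varepsilon$, which is uniformly bounded, the uniform H\"older/Lipschitz estimates (\Cref{basicreg}, or \Cref{thm:main}) give equicontinuity of $\{u_\varepsilon\}$ on $\overline\Omega$, and a subsequence converges uniformly to a function $u\in C(\overline\Omega)$ that solves \eqref{me} by stability. Your boundary-barrier and envelope constructions are fine in themselves, but they must be carried out at the level of the approximated problems; as written, the proof of the crucial identity $u^*\le u_*$ has no valid justification.
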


Our strategy is to improve the global regularity of a viscosity solution $u$ gradually. For this purpose, we begin with the ABP estimate to show the global boundedness of solutions. Then, by constructing an appropriate barrier function near the boundary, we capture the boundary behavior of solutions in terms of a distance function. The comparison with a distance function allows us to achieve a global Lipschitz estimate. In the end, we prove the approximation lemma by employing the compactness argument and then determine approximating linear functions in an iterative manner. 

The main difficulty arises due to different behaviors of solutions relying on the sign of $i(\Phi)$ defined in \ref{a2}.
To overcome such a challenge, we first discuss the degenerate case ($i(\Phi) \geq 0$) in \Cref{C01} and then transport the regular properties to a viscosity solution of the singular case ($-1<i(\Phi)<0$) along with a suitable modification of equations in \Cref{C1alpha}. In addition, the degenerate or singular character of PDEs leads to the lack of the comparison principle in general settings. Therefore, we formulate special types of the comparison principle: one is \Cref{specialCP}, where we exploit the smooth feature of barrier functions, and the other is \Cref{thm_CP}, in which we approximate the equations to have a monotone property with respect to viscosity solution $u$ of \eqref{me},

The paper is organized as follows. In \Cref{preliminaries}, we present the assumptions \ref{a1}-\ref{a4} on the equation \eqref{me} and data to be used throughout the paper, and then collect preliminary results related to our main theorem. \Cref{ABP} is devoted to the proof of ABP estimate. The proofs for global $C^{0, 1}$-estimate and $C^{1, \alpha}$-estimate of viscosity solutions $u$ of \eqref{me} are provided in \Cref{C01} and \Cref{C1alpha}, respectively. Finally, in \Cref{Perron}, we prove the comparison principle under an additional assumption \ref{a5} to deduce the existence of a viscosity solution by Perron's method.  

\section{Preliminaries}
\label{preliminaries}
Throughout the paper, we denote by $B_{r}(x_0):= \{x\in \R^{n} : |x-x_0|< r\}$ the open ball of $\R^{n}$ with $n\geq 2$ centered at $x_0$ with positive radius $r$. If the center is clear in the context, we shall omit the center point by writing $B_{r}\equiv B_{r}(x_0)$. Moreover, $B_{1}\equiv B_{1}(0)\subset \R^{n}$ denote the unit ball. We shall always denote by $c$ a generic positive constant, possible varying line to line, having dependencies on parameters using brackets, that is, for example $c\equiv c(n,i(\Phi),\nu_0)$ means that $c$ depends only on $n,i(\Phi)$ and $\nu_0$. For two positive functions $f, g$, we write $f \lesssim g$ when there exists a universal constant $c>0$ such that $f \leq cg$.

For a measurable map $g : \mathcal{B}\subset B_{1}\rightarrow \R^{N}$ $(N\geq 1)$ with $\beta\in (0,1]$ being a given number, we shall use the notation
\begin{align*}
	[g]_{C^{0,\beta}(\mathcal{B})}:= \sup\limits_{x\neq y\in \mathcal{B}} \frac{|g(x)-g(y)|}{|x-y|^{\beta}},\quad
	[g]_{C^{0,\beta}}:= [g]_{C^{0,\beta}(B_{1})}.
\end{align*}

We now state the main assumptions in the paper.
\begin{description}
	\item[(A1)\label{a1}] The operator $F : \mathcal{S}(n)\rightarrow \R$ is continuous and uniformly $(\lambda,\Lambda)$-elliptic in the sense that
	\begin{align*}
		\lambda \text{tr}(N) \leq F(M+N)-F(M) \leq \Lambda \text{tr}(N)
	\end{align*}
	holds with some constants $0<\lambda\leq \Lambda$ and $F(0)=0$, whenever $M,N\in \mathcal{S}(n)$ with $N\geq 0$, where we denote by $\mathcal{S}(n)$ to mean the set of $n\times n$ real symmetric matrices.
	\item[(A2)\label{a2}] $\Phi : \Omega\times [0,\infty)\rightarrow [0,\infty) $ is a continuous map satisfying the following properties: 
	\begin{enumerate}
		\item[1.] There exist constants $ s(\Phi)\geq i(\Phi)>-1$ such that the map $\textstyle t\mapsto \frac{\Phi(x,t)}{t^{i(\Phi)}}$ is almost non-decreasing with constant $L\geq 1$ in $(0,\infty)$ and the map $\textstyle t\mapsto \frac{\Phi(x,t)}{t^{s(\Phi)}}$ is almost non-increasing with constant $L\geq 1$ in $(0,\infty)$ for all $x\in \Omega$.
		\item[2.] There exists constants $0<\nu_0\leq \nu_1$ such that $\displaystyle \nu_{0} \leq  \Phi(x,1) \leq \nu_{1}$ for all $x\in \Omega$.
	\end{enumerate}
	\item[(A3)\label{a3}] $f\in C(\Omega) \cap L^{\infty}({\O})$ and $g\in C^{1,\beta_{g}}(\partial\O)$ for some $\beta_{g}\in (0,1)$.
	\item[(A4)\label{a4}] $\Omega\subset\R^{n}$ is a bounded $C^{1,1}$-domain. 
\end{description}

The Pucci extremal operators $P_{\lambda,\Lambda}^{\pm} : \mathcal{S}(n)\rightarrow \R$ are defined as 
\begin{align*}
	P_{\lambda,\Lambda}^{+}(M):= \Lambda\sum\limits_{\lambda_{k}>0} \lambda_{k} + \lambda\sum\limits_{\lambda_{k}<0}\lambda_{k}
\end{align*}
and
\begin{align*}
	P_{\lambda,\Lambda}^{-}(M):= \lambda\sum\limits_{\lambda_{k}>0} \lambda_{k} +\Lambda\sum\limits_{\lambda_{k}<0}\lambda_{k},
\end{align*}
where $\{\lambda_{k}\}_{k=1}^{n}$ are the eigenvalues of the matrix $M$. The $(\lambda,\Lambda)$-ellipticity of the operator $F$ via the Pucci extremal operators can be formulated as 
\begin{align*}
	P_{\lambda,\Lambda}^{-}(N) \leq F(M+N)-F(M) \leq P_{\lambda,\Lambda}^{+}(N)
\end{align*}
for all $M, N\in \mathcal{S}(n)$.

Before we proceed, we briefly explain several useful results concerning the assumption \ref{a4}. We may assume that $0 \in \partial \Omega$, and  there exist a ball $B=B_R(0)$ in $\mathbb{R}^n$ and $\phi \in C^{1,1}(\mathbb{R}^{n-1})$ such that $\phi(0)=0, \nabla \phi(0)=0$ and
		\begin{align*}
		\Omega \cap B \subset \{y\in B : y_n>\phi(y') \}, \quad \partial \Omega \cap B=\{y\in B : y_n=\phi(y') \}.
		\end{align*}

		\begin{definition}[The ball condition, \cite{AKSZ1}]
			Let $\Omega$ be a bounded domain in $\mathbb{R}^n$. We say that $D$ satisfies the \textit{exterior ball condition} (with radius $r$) if there exists $r>0$ satisfying the following condition: for every $x \in \partial \Omega$, there exists a point $x^e \in \mathbb{R}^n \setminus \Omega$ such that $B_{r}(x^e) \subset \mathbb{R}^n \setminus \Omega$ and $x \in \partial B_{r}(x^e)$. Similarly, we can define the \textit{interior ball condition}. Finally, we say that $\Omega$ satisfies the \textit{ball condition} (with radius $r$) if $\Omega$ satisfies both the exterior and the interior ball condition (with radius $r$).
		\end{definition}
	
 \begin{lemma}[\cite{AKSZ1}]\label{ballcond}
			Let $\Omega \subset \mathbb{R}^n$ be a bounded domain. Then $\Omega$ is a $C^{1, 1}$-domain if and only if $\Omega$ satisfies the ball condition.
		\end{lemma}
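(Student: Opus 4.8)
The plan is to prove the two implications separately; the equivalence is classical, so I only sketch the mechanism. For the direction ``$C^{1,1}$-domain $\Rightarrow$ ball condition'', I would use compactness of $\partial\Omega$ to cover it by finitely many coordinate cylinders in which $\partial\Omega$ is a graph $y_n=\phi(y')$ with $\phi(0)=0$, $\nabla\phi(0)=0$, and then pass to the worst constants over this finite cover, so that I may assume a common cylinder radius $R>0$ and a common Lipschitz bound $[\nabla\phi]_{C^{0,1}}\le K$. The Lipschitz control of $\nabla\phi$ yields the quantitative Taylor inequality $|\phi(y')-\phi(z')-\nabla\phi(z')\cdot(y'-z')|\le\tfrac{K}{2}|y'-z'|^2$. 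Fixing a boundary point, moving it to the origin with inner normal $e_n$, this inequality traps the graph between the spheres $\partial B_\rho(\rho e_n)$ and $\partial B_\rho(-\rho e_n)$ inside the cylinder as soon as $\rho\le 1/K$, since a point $(y',y_n)$ of such a sphere satisfies $|y_n|\ge |y'|^2/(2\rho)\ge (K/2)|y'|^2$; hence the interior and exterior ball conditions both hold with the uniform radius $r=\min\{1/K,\,R/2\}$.

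For the converse, assume $\Omega$ satisfies the ball condition with radius $r$. The first step is to observe that the two tangent balls at each $x\in\partial\Omega$ are antipodal: if $B_r(x^i)\subset\Omega$ and $B_r(x^e)\subset\R^n\setminus\Omega$ touch $\partial\Omega$ at $x$, then they are disjoint open balls of radius $r$, so $|x^i-x^e|\ge 2r$, while $|x^i-x|=|x^e-x|=r$; by the triangle inequality $x$ must be the midpoint of the segment $[x^i,x^e]$. In particular $x^e$ is uniquely determined by $x$, so $\nu(x):=(x^e-x)/r$ is a well-defined unit vector (the candidate outward normal). Next, for $z,w\in\partial\Omega$, I would compare the interior ball at $w$ with the exterior ball at $z$ (they are disjoint) and expand $|(w-r\nu(w))-(z+r\nu(z))|\ge 2r$; combined with the elementary bounds $|\langle z-w,\nu(w)\rangle|\le|z-w|^2/(2r)$ and $|\langle z-w,\nu(z)\rangle|\le|z-w|^2/(2r)$ (valid because neither $z$ nor $w$ lies in the open balls centred at $w\mp r\nu(w)$ and $z\mp r\nu(z)$), this gives $1-\langle\nu(z),\nu(w)\rangle\le\tfrac{3}{2r^2}|z-w|^2$, i.e. $\nu$ is Lipschitz on $\partial\Omega$ with constant $\sqrt{3}/r$.

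With the Lipschitz normal field available, I would fix $x_0\in\partial\Omega$, rotate so that $\nu(x_0)=-e_n$ (hence $B_r(re_n)\subset\Omega$ and $B_r(-re_n)\subset\R^n\setminus\Omega$), and work in a small cylinder around $x_0=0$. The Lipschitz bound forces the last component of $\nu$ to stay close to $-1$ there, which by the inner-product estimates above makes the projection $z\mapsto z'$ injective on $\partial\Omega$ near $0$; a connectedness argument along vertical segments whose top and bottom endpoints lie in $B_r(re_n)$ and $B_r(-re_n)$ respectively shows this projection is onto a neighbourhood of $0$ in $\R^{n-1}$, so $\partial\Omega$ is locally the graph $y_n=\phi(y')$ with $\phi(0)=0$. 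Applying $|\langle z-z_0,\nu(z_0)\rangle|\le|z-z_0|^2/(2r)$ with $z=(y',\phi(y'))$, $z_0=(y'_0,\phi(y'_0))$ first shows $\phi$ is Lipschitz with a small constant near $0$ (so $|z-z_0|\simeq|y'-y'_0|$), and then, after writing $e_n$ in terms of $\nu(z_0)$, gives $|\phi(y')-\phi(y'_0)-p(z_0)\cdot(y'-y'_0)|\le C|y'-y'_0|^2$ with $p(z_0):=-\nu'(z_0)/\nu_n(z_0)$, where $\nu'$ denotes the first $n-1$ components of $\nu$. Thus $\phi$ is differentiable with $\nabla\phi(y')=p((y',\phi(y')))$ and $\nabla\phi(0)=0$; since $\nu$ is Lipschitz and $\nu_n$ is bounded away from $0$, $p$ is Lipschitz, and composing with the Lipschitz map $y'\mapsto(y',\phi(y'))$ shows $\nabla\phi$ is Lipschitz with a constant depending only on $n$ and $r$. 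This is the desired local $C^{1,1}$-graph representation, and compactness of $\partial\Omega$ turns it into a finite atlas.

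I expect the forward implication to be essentially Taylor-expansion bookkeeping. The substance lies in the converse, where the two crucial quantitative inputs are the antipodality of the tangent balls (which gives a single well-defined normal $\nu$) and the expansion of the distance between an interior ball at one boundary point and an exterior ball at another (which yields the Lipschitz estimate for $\nu$). Once $\nu$ is Lipschitz, producing the local graph and bounding $[\nabla\phi]_{C^{0,1}}$ is routine; the point demanding the most care is verifying that the projection onto the tangent hyperplane is genuinely a homeomorphism onto a neighbourhood — i.e. that $\partial\Omega$ is an actual graph, not merely a set squeezed between two spheres.
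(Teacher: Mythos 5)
The paper does not prove this lemma at all: it is quoted as a known result from the cited reference \cite{AKSZ1}, so there is no in-paper argument to compare yours against. Your sketch is a correct, self-contained proof along the standard lines (essentially the argument one finds in the literature for this equivalence): the forward direction is indeed just quantitative Taylor bookkeeping with a uniform $C^{1,1}$ constant, and in the converse the two decisive steps you isolate --- antipodality of the interior and exterior tangent balls, which makes the unit normal $\nu$ well defined, and the separation of an interior ball at $w$ from an exterior ball at $z$, which together with $|\langle z-w,\nu(w)\rangle|\le |z-w|^{2}/(2r)$ gives the Lipschitz bound $|\nu(z)-\nu(w)|\le (\sqrt{3}/r)\,|z-w|$ --- are exactly what carries the proof, and your subsequent derivation of the graph representation and of $[\nabla\phi]_{C^{0,1}}\lesssim 1/r$ from $|\langle z-z_{0},\nu(z_{0})\rangle|\le |z-z_{0}|^{2}/(2r)$ is sound. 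Two routine points deserve one extra line if this were written out in full: in the converse one should check not only that $\partial\Omega$ is locally a graph but that $\Omega$ locally coincides with the supergraph (your vertical-segment argument does extend to this), and in the forward direction the tangent plane at a nearby boundary point is tilted relative to the cylinder's coordinates, so the containment of the tangent balls needs the Taylor inequality applied in the rotated frame (or a slightly smaller radius); neither affects the validity of the approach.
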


On the other hand, for any vector $\xi\in\R^{n}$, we  consider a map $G_{\xi} : \Omega\times \R^{n}\times \mathcal{S}(n)\rightarrow \R$ defined by
\begin{align*}
	G_{\xi}(x,p,M): = \Phi(x,|\xi+p|)F(M)-f(x)
\end{align*}
under the assumptions prescribed in \ref{a1}-\ref{a3}. In \Cref{C01} and \Cref{C1alpha}, we shall focus on viscosity solutions of the equation
\begin{align}\label{xi-eq}
	G_{\xi}(x,Du,D^2u)=0 \text{ in } \Omega.
\end{align}

We now give the definition of a viscosity solution $u$ of the equation \eqref{xi-eq} as follows.

\begin{definition}
	A lower semicontinuous function $v$ is called a \textit{viscosity supersolution} of \eqref{xi-eq} if for all $x_0\in \Omega$ and $\varphi\in C^{2}(\Omega)$ such that $v-\varphi$ has a local minimum at $x_0$, then 
	\begin{align*}
		G_{\xi}(x_{0},D\varphi(x_0),D^{2}\varphi(x_0))\leq 0.
	\end{align*}
	An upper semicontinuous function $w$ is called is a \textit{viscosity subsolution} of \eqref{xi-eq} if for all $x_0\in \Omega$ and $\varphi\in C^{2}(\Omega)$ such that $w-\varphi$ has a local maximum at $x_0$, there holds 
	\begin{align*}
		G_{\xi}(x_{0},D\varphi(x_0),D^{2}\varphi(x_0))\geq 0.
	\end{align*}
	We say that $u\in C(\Omega)$ is a \textit{viscosity solution} of \eqref{xi-eq} if $u$ is a viscosity supersolution and a subsolution simultaneously.
\end{definition}

We also recall a concept of superjet and subjet introduced in \cite{CIL1}.
\begin{definition}
	\label{def:ssjet}
	Let $v : \Omega\rightarrow \R$ be an upper semicontinuos function and $w : \Omega\rightarrow \R$ be a lower semicontinuous function. For every $x_0\in\O$, we define the \textit{second order superjet} of $v$ at $x_0$ by
\begin{equation*}
\begin{array}{rcl}
J^{2,+} v(x_0) & := & \Bigg\{ (p,M)\in \R^{n}\times\mathcal{S}(n) : v(x) \leq
		v(x_0) + \inner{p}{x-x_0} \Bigg.\\
& & \quad\left.+ \frac{1}{2}\inner{M(x-x_0)}{x-x_0} + o(|x-x_0|^{2}) \text{ as } x\rightarrow x_0 \right\}
\end{array}
\end{equation*}
and the \textit{second order subjet} of $w$ at $x_0$ by
\begin{equation*}
\begin{array}{rcl}
J^{2,-} w(x_0) & := & \Bigg\{ (p,M)\in \R^{n}\times\mathcal{S}(n) : w(x) \geq 
		w(x_0) + \inner{p}{x-x_0} \Bigg.\\
& & \quad\left.+ \frac{1}{2}\inner{M(x-x_0)}{x-x_0} + o(|x-x_0|^{2}) \text{ as } x\rightarrow x_0 \right\}.
\end{array}
\end{equation*}
\begin{enumerate}[(i)]
\item A couple $(p,M)\in \R^{n}\times \mathcal{S}(n)$ is a \textit{limiting superjet} of $v$ at $x_{0}\in \O$ if there exists a sequence $\{x_k,p_{k},M_{k}\}\rightarrow \{x,p,M\}$ as $k\rightarrow \infty$  in such a way that $(p_{k},M_{k})\in J^{2,+}v(x_k)$ and $\lim\limits_{k\to\infty}v(x_{k}) = v(x_0)$.
\item A couple $(p,M)\in \R^{n}\times \mathcal{S}(n)$ is a \textit{limiting subjet} of $w$ at $x\in B_{1}$ if there exists a sequence $\{x_k,p_{k},M_{k}\}\rightarrow \{x,p,M\}$ as $k\rightarrow \infty$  in such a way that $(p_{k},M_{k})\in J^{2,-}w(x_k)$ and $\lim\limits_{k\to\infty}w(x_{k}) = w(x_0)$.
	\end{enumerate}
\end{definition}

%\begin{lemma}[{\cite[Lemma 2.1]{Ju1}}]
%	\label{lem:ssjet}
%	Let $v : \Omega\rightarrow \R$ be an upper semicontinuous function, $w : \Omega\rightarrow \R$ be a lower semicontinuous function and $x_0\in \Omega$. Then 
%	\begin{equation*}
%\begin{array}{rcl}
%J^{2,+} v(x_0) & = & \left\{ (D\varphi(x_0),D^{2}\varphi(x_0)) : \varphi\in C^2(\Omega),\, \varphi(x_0) = v(x_0),\right.\\
%& & \quad\left. \varphi \geq v \text{ in a neighborhood of } x_0 \right\}.
%\end{array}
%\end{equation*}
%
%\begin{equation*}
%\begin{array}{rcl}
%J^{2,-} w(x_0) & = & \left\{ (D\varphi(x_0),D^{2}\varphi(x_0)) : \varphi\in C^2(\Omega),\, \varphi(x_0) = w(x_0),\right.\\
%& & \quad\left. \varphi \leq w \text{ in a neighborhood of } x_0 \right\}.
%\end{array}
%\end{equation*}
%\end{lemma}

We finish this section by providing the interior $C^{1, \alpha}$-regularity results shown in \cite{BBLL1}. 
\begin{theorem}[{\cite[Theorem 1.1]{BBLL1}}] \label{interior}
	Let $u \in C(B_1)$ be a viscosity solution of
	\begin{align*}
	\Phi(x, |Du|)F(D^2u)=f(x) \quad \text{in} \quad B_1,
	\end{align*}
	under the assumptions \ref{a1} and \ref{a2} with $f \in L^{\infty}(B_1)$. Then $u \in C_{\mathrm{loc}}^{1, \beta}(B_1)$ for all $\beta>0$ satisfying
	\begin{align} \label{beta}
	\beta \in
	\begin{cases}
	(0, \overline{\alpha}) \cap \left(\frac{1}{1+s(\Phi)} \right]   \ &\text{if $i(\Phi) \geq 0$},\\
	(0, \overline{\alpha}) \cap \left(\frac{1}{1+s(\Phi)-i(\Phi)} \right] \ &\text{if $-1<i(\Phi) <0$}.
	\end{cases}
	\end{align} 
 Moreover, for every $\beta$ in \eqref{beta}, there exists a constant $c \equiv c(n, \lambda, \Lambda, i(\Phi), L, \beta)$ such that
	\begin{align*}
	\|u\|_{L^{\infty}(B_{1/2})}+\sup_{x \neq y \in B_{1/2}} \frac{|Du(x)-Du(y)|}{|x-y|^{\beta}} \leq c\left(1+\|u\|_{L^{\infty}(B_1)}+\|f/\nu_0 \|_{L^{\infty}(B_1)}^{\frac{1}{1+i(\Phi)}} \right).
	\end{align*}
\end{theorem}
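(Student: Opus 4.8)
The strategy is the method of improvement of flatness: one shows that near each interior point $u$ is well approximated by affine functions whose slopes converge at the H\"older rate $|x|^{\beta}$. First I would normalise. Given $u$, set $K:=1+\|u\|_{L^{\infty}(B_{1})}+\|f/\nu_{0}\|_{L^{\infty}(B_{1})}^{1/(1+i(\Phi))}$ (admissible since $1+i(\Phi)>0$) and $v:=u/K$. Then $v$ solves an equation of the same structure $\widetilde{\Phi}(x,|Dv|)\widetilde{F}(D^{2}v)=\widetilde{f}$ in $B_{1}$, with $\widetilde{F}$ uniformly $(\lambda,\Lambda)$-elliptic, $\widetilde{\Phi}$ satisfying \ref{a2} with the \emph{same} indices $i(\Phi),s(\Phi)$, and $\|\widetilde{f}/\widetilde{\nu}_{0}\|_{L^{\infty}(B_{1})}\le\delta$ for any prescribed small universal $\delta$; the only thing to check is how $\Phi(x,Kt)$ compares with $\Phi(x,t)$, and by \ref{a2} their ratio lies between $L^{-1}K^{i(\Phi)}$ and $LK^{s(\Phi)}$, so that the exponent $1/(1+i(\Phi))$ in the statement appears precisely through $\Phi(x,K)\gtrsim\nu_{0}K^{i(\Phi)}$. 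Since $[Du]_{C^{0,\beta}(B_{1/2})}=K\,[Dv]_{C^{0,\beta}(B_{1/2})}$ and $\|u\|_{L^{\infty}}=K\|v\|_{L^{\infty}}$, it suffices to bound $\|v\|_{L^{\infty}(B_{1/2})}+[Dv]_{C^{0,\beta}(B_{1/2})}$ by a universal constant for normalised $v$.

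Next I would assemble the two analytic ingredients. The first is a uniform interior H\"older estimate for normalised solutions, obtainable from the ABP estimate and Krylov--Safonov theory (in any region where $|Dv|$ is comparable to $1$ one may divide by $\widetilde{\Phi}$, which is controlled above and below there by \ref{a2}, to reach a uniformly elliptic equation with bounded right-hand side); this yields $v\in C^{0,\gamma}_{\mathrm{loc}}$ with a universal bound and hence precompactness. The second is an \emph{approximation lemma}: for every $\varepsilon>0$ there is $\delta>0$ such that every normalised solution with $\|\widetilde{f}/\widetilde{\nu}_{0}\|_{L^{\infty}(B_{1})}\le\delta$ is within $\varepsilon$ in $L^{\infty}(B_{1/2})$ of some $h$ with $P^{-}_{\lambda,\Lambda}(D^{2}h)\le 0\le P^{+}_{\lambda,\Lambda}(D^{2}h)$ in $B_{3/4}$, which therefore enjoys a universal interior $C^{1,\overline{\alpha}}$ estimate. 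I would prove this by contradiction and compactness: a sequence $v_{j}$ with right-hand side tending to $0$ subconverges locally uniformly to some $h$ for which the Pucci inequalities hold — on the set of non-degenerate limiting gradient by stability of viscosity solutions, and at points of vanishing gradient directly from test functions, using that the factor $\widetilde{\Phi}(x_{j},|Dv_{j}|)$ cannot reverse the sign. Since $\widetilde{\Phi}$ may simultaneously degenerate and blow up one cannot simply divide by it in the limit, and it is here that the cases diverge: I would treat the degenerate case $i(\Phi)\ge 0$ first and reduce the singular case $-1<i(\Phi)<0$ to it through a regularisation of the equation together with a special comparison principle.

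With these in hand I would run the iteration. Fixing $x_{0}\in B_{1/2}$, construct inductively affine functions $\ell_{k}(x)=a_{k}+b_{k}\cdot(x-x_{0})$ with $\|v-\ell_{k}\|_{L^{\infty}(B_{\rho^{k}}(x_{0}))}\le\rho^{k(1+\beta)}$ and $|a_{k+1}-a_{k}|+\rho^{k}|b_{k+1}-b_{k}|\lesssim\rho^{k(1+\beta)}$, for a small universal $\rho$. The rescaled function $w(x):=\rho^{-k(1+\beta)}\big(v(x_{0}+\rho^{k}x)-\ell_{k}(x_{0}+\rho^{k}x)\big)$ solves $\widetilde{\Phi}(x_{0}+\rho^{k}x,|b_{k}+\rho^{k\beta}Dw|)F_{k}(D^{2}w)=\rho^{k(1-\beta)}\widetilde{f}(x_{0}+\rho^{k}x)$ with $F_{k}(M):=\rho^{k(1-\beta)}\widetilde{F}(\rho^{-k(1-\beta)}M)$ again $(\lambda,\Lambda)$-elliptic — an equation of the type $G_{\xi}$ with $\xi\sim b_{k}$. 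If $|b_{k}|\ge K_{0}\rho^{k\beta}$ then $|b_{k}+\rho^{k\beta}Dw|$ is comparable to $|b_{k}|$, so after dividing by $\widetilde{\Phi}(\cdot,|b_{k}|)\gtrsim\rho^{k\beta s(\Phi)}$ the right-hand side is $\lesssim\rho^{k(1-\beta(1+s(\Phi)))}\delta\le\delta$, and the approximation lemma plus the $C^{1,\overline{\alpha}}$ estimate for $h$ (using $\beta<\overline{\alpha}$) produce $\ell_{k+1}$. If $|b_{k}|<K_{0}\rho^{k\beta}$ the rescaled gradient has size $\lesssim\rho^{k\beta}$ and one keeps the slope comparably small; a careful accounting of how $\widetilde{\Phi}$ rescales at small arguments — using $\widetilde{\Phi}(x,t)\gtrsim t^{s(\Phi)}$ for $t\le 1$, and in the singular case the regularisation of the equation, which costs an extra factor controlled by the gap $s(\Phi)-i(\Phi)$ — shows that the flatness improves exactly when $\beta(1+s(\Phi))\le 1$ for $i(\Phi)\ge 0$, respectively $\beta(1+s(\Phi)-i(\Phi))\le 1$ for $-1<i(\Phi)<0$, which is the range \eqref{beta}.

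Finally, summing the geometric estimates gives $b_{k}\to Dv(x_{0})$ with $|Dv(x_{0})-b_{k}|\lesssim\rho^{k\beta}$ and $\|v-\ell_{\infty}\|_{L^{\infty}(B_{r}(x_{0}))}\lesssim r^{1+\beta}$ for all small $r$, where $\ell_{\infty}$ is the affine limit with $D\ell_{\infty}=Dv(x_{0})$. Since $x_{0}\in B_{1/2}$ was arbitrary with uniform constants, the standard pointwise-to-interior upgrade yields $Dv\in C^{0,\beta}(B_{1/2})$ with a universal bound, and undoing the normalisation of the first step gives the asserted estimate. I expect the main obstacle to be the approximation lemma together with the degenerate (small-gradient) regime of the iteration: showing, uniformly in the scale, that the compactness limit is a genuine Pucci sub/supersolution when $\widetilde{\Phi}$ degenerates or blows up, and — especially in the singular case — correcting $u$ by a bounded affine increment without losing the geometric decay; this is where the hypotheses on $i(\Phi)$ and $s(\Phi)$ are used in full strength and where the singular and degenerate cases must be handled separately.
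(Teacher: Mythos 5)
You should note at the outset that this paper does not prove \Cref{interior} at all: it is imported verbatim from \cite{BBLL1}, so the only fair comparison is with the strategy of that paper (which is mirrored in the global argument of \Cref{C1alpha} here). Your sketch does follow that same scheme in outline: normalization to a smallness regime with the exponent $1/(1+i(\Phi))$ coming from $\Phi(x,K)\gtrsim \nu_{0}K^{i(\Phi)}$, a compactness approximation lemma, an iteration of affine approximations with the rescaled nonlinearity $\Phi_{k}(y,t)=\Phi(\rho^{k}y,\rho^{k\alpha}t)/\Phi(\rho^{k}y,\rho^{k\alpha})$ and the constraint $\beta(1+s(\Phi))\le 1$, and a treatment of the degenerate case first with the singular case reduced to it.

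There is, however, a genuine gap in the central step. Your approximation lemma only produces an $h$ satisfying $P^{-}_{\lambda,\Lambda}(D^{2}h)\le 0\le P^{+}_{\lambda,\Lambda}(D^{2}h)$, i.e.\ membership in the Pucci class $S(\lambda,\Lambda)$; such functions are in general only $C^{0,\gamma}$ by Krylov--Safonov (for instance $h(x)=|x_{1}|$ lies in this class), so the ``universal interior $C^{1,\overline{\alpha}}$ estimate'' you invoke to define $\ell_{k+1}$ does not follow from these two inequalities. The actual argument must show that the compactness limit solves a genuine uniformly elliptic equation $\mathcal{F}(D^{2}h)=0$, independent of $x$ and of the gradient; this is precisely the delicate cutting step \cite[Lemma 4.1]{BBLL1} (used again in \Cref{approxlem} of this paper), where one verifies the equation also at test points with vanishing gradient rather than settling for the Pucci class. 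Relatedly, the equicontinuity needed for the compactness argument cannot be obtained by ``dividing by $\widetilde{\Phi}$ in regions where $|Dv|$ is comparable to $1$'' --- the equation cannot be localized to such regions a priori; in \cite{BBLL1} (and in \Cref{basicreg} and \Cref{basicreg2} here for the boundary version) it comes from a Lipschitz estimate proved by the Ishii--Lions method. That Lipschitz bound is also what makes the singular case work: one rewrites the equation with $\tilde{\Phi}(x,t)=t^{-i(\Phi)}\Phi(x,t)$ and $\tilde{f}=|Du|^{-i(\Phi)}f$, so that $i(\tilde{\Phi})=0$, $s(\tilde{\Phi})=s(\Phi)-i(\Phi)$ and $\|\tilde{f}\|_{\infty}$ is controlled by the Lipschitz constant, which is the concrete mechanism behind your vaguer ``regularisation plus special comparison principle'' and the source of the exponent $1/(1+s(\Phi)-i(\Phi))$. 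With these two points repaired --- limit solving $\mathcal{F}(D^{2}h)=0$ instead of mere Pucci inequalities, and Ishii--Lions Lipschitz bounds as the compactness/reduction tool --- your outline matches the known proof.
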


\section{Alexandroff-Bakelman-Pucci estimate}\label{ABP}
Before we develop $C^{0, 1}$-regularity in \Cref{C01} and $C^{1, \alpha}$-regularity in \Cref{C1alpha}, our study on the global regularity starts with the Alexandroff-Bakelman-Pucci (ABP) estimate. In short, the ABP estimate gives the supremum of $u$ over $\Omega$ in terms of the supremum of $u$ on $\partial \Omega$ and the $L^{n}$-norm of $f$. In this section, we deduce an appropriate version of the ABP estimate for a viscosity subsolution of \eqref{me}. We refer to \cite{DFQ1, Im1, Ju1} for similar results.

\begin{definition}
	\label{def:upconset}
	For $v : \Omega\rightarrow \R$ and $R>0$, the \textit{upper contact set} is defined by 
	\begin{align*}
		\begin{split}
			& \Gamma^{+}(v,\Omega) = \left\{ x\in\Omega : \exists p\in\R^{n}\text{ such that } u(y) \leq
			u(x) + \inner{p}{y-x}\text{ for all }y\in\Omega \right\},
			\\&
			\Gamma^{+}_{R}(v,\Omega) = \left\{ x\in\Omega : \exists p\in\overline{B_{R}(0)} \text{ such that } u(y) \leq
			u(x) + \inner{p}{y-x}\text{ for all }y\in\Omega \right\}.
		\end{split}
	\end{align*}
\end{definition}

We are now ready to prove the ABP estimate.
\begin{theorem}[Alexandroff-Bakelman-Pucci estimate]
	\label{thm_ABP} 
	Suppose that $u\in C(\overline{\O})$ is a viscosity subsolution (resp. supersolution) of \eqref{me} in $\{x\in\Omega : u(x)>0\}$ (resp. $\{x\in\Omega : u(x)<0\}$) under the assumptions \ref{a1}-\ref{a2}. Suppose that $f\in L^{n}(\Omega)\cap C(\Omega)$. Then, there exists a constant $c\equiv c(n,\lambda,i(\Phi),s(\Phi),L,\nu_0)$ such that 
	\begin{align}
		\label{thm_ABP:1}
		\sup\limits_{\Omega} u \leq \sup\limits_{\partial\Omega} g^{+}
		+ c\diam(\Omega)\left(\max\left\{ \norm{f^{-}}^{\frac{1}{i(\Phi)+1}}_{L^{n}(\Gamma^{+}(u^{+}))}, \norm{f^{-}}^{\frac{1}{s(\Phi)+1}}_{L^{n}(\Gamma^{+}(u^{+}))} \right\}+1\right),
	\end{align}
	respectively, 
	\begin{align}
		\label{thm_ABP:2}
		\left(\sup\limits_{\Omega} u^{-}\leq \sup\limits_{\partial\Omega} g^{-}
		+ c\diam(\Omega)\left(\max\left\{ \norm{f^{+}}^{\frac{1}{i(\Phi)+1}}_{L^{n}(\Gamma^{+}(u^{-}))}, \norm{f^{+}}^{\frac{1}{s(\Phi)+1}}_{L^{n}(\Gamma^{+}(u^{-}))} \right\}+1\right)\right).
	\end{align}
	In particular, we have 
	\begin{align}
		\label{thm_ABP:3}
		\norm{u}_{L^{\infty}(\Omega)} \leq
		\norm{g}_{L^{\infty}(\partial\Omega)} + c\diam(\Omega)\left(\max\left\{ \norm{f}_{L^{n}(\Omega)}^{\frac{1}{i(\Phi)+1}}, \norm{f}_{L^{n}(\Omega)}^{\frac{1}{s(\Phi)+1}} \right\}+1\right)
	\end{align}
	for some constant $c\equiv c(n,\lambda,i(\Phi),s(\Phi),L,\nu_0)>0$.
\end{theorem}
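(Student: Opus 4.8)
The plan is to absorb the weight $\Phi(x,|Du|)$ into the right-hand side, reduce to the classical uniformly elliptic ABP machinery on an upper contact set, and then recover the two exponents $\tfrac{1}{1+i(\Phi)}$ and $\tfrac{1}{1+s(\Phi)}$ by a radial change of variables in the image of the gradient map. First I would record the elementary consequence of \ref{a2}: comparing $t\mapsto\Phi(x,t)/t^{i(\Phi)}$ and $t\mapsto\Phi(x,t)/t^{s(\Phi)}$ with $t=1$ and using $\Phi(x,1)\geq\nu_0$ gives $\Phi(x,t)\geq\frac{\nu_0}{L}\min\{t^{i(\Phi)},t^{s(\Phi)}\}$ for all $x\in\Omega$, $t>0$. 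It suffices to prove \eqref{thm_ABP:1}: estimate \eqref{thm_ABP:2} follows by applying it to $-u$, and \eqref{thm_ABP:3} by combining the two together with $\|f^\mp\|_{L^n(\Gamma^+(u^\pm))}\leq\|f\|_{L^n(\Omega)}$. I would set $d:=\diam(\Omega)$, $\ell:=\sup_{\partial\Omega}g^+$, $v:=(u-\ell)^+$ extended by $0$ outside $\Omega$, and $M:=\sup_\Omega v$, and assume $M>0$; then $v\in C(\R^n)$, $v\geq0$, $v\equiv0$ outside $\Omega$, and wherever $v>0$ one has $u>\ell\geq0$, so that $u$, hence $v$, is a viscosity subsolution of \eqref{me} there.

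Next I would invoke the standard concave-envelope/covering lemma underlying the ABP estimate (see \cite{CC1}; cf.\ \cite{Im1,Ju1}): letting $w$ be the concave envelope of $v$ over a large ball containing $\overline\Omega$ and $\mathcal C:=\{w=v\}$ its contact set, one has $B_{M/d}(0)\subseteq\overline{Dw(\mathcal C\cap\Omega)}$, the function $w$ is twice differentiable a.e.\ on $\mathcal C$, and at a.e.\ $x\in\mathcal C\cap\Omega$ with $v(x)>0$ one has $(Dw(x),D^2w(x))\in J^{2,+}v(x)$ with $D^2w(x)\leq0$. Testing the subsolution property at such $x$ gives $\Phi(x,|Dw(x)|)F(D^2w(x))\geq f(x)\geq-f^-(x)$; since $D^2w(x)\leq0$, \ref{a1} forces $F(D^2w(x))\leq\lambda\,\tr(D^2w(x))\leq0$, so $-\tr(D^2w(x))\leq f^-(x)/(\lambda\,\Phi(x,|Dw(x)|))$ and, by the arithmetic--geometric mean inequality for $-D^2w(x)\geq0$,
\[
\bigl|\det D^2w(x)\bigr|\;\leq\;\left(\frac{f^-(x)}{n\lambda\,\Phi(x,|Dw(x)|)}\right)^{\!n}.
\]
The points of $\mathcal C\cap\Omega$ with $v(x)=0$ or $Dw(x)=0$ contribute nothing ($\det D^2w=0$ a.e.\ there), and since $w\geq v\geq0$ one checks $\mathcal C\cap\Omega\cap\{v>0\}\subseteq\Gamma^+(u^+,\Omega)$.

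Then I would introduce $h\in C^1([0,\infty))$, the increasing bijection with $h(0)=0$, $h'(t)=t^{s(\Phi)}$ for $t\leq1$ and $h'(t)=t^{i(\Phi)}$ for $t\geq1$ (well defined because $i(\Phi),s(\Phi)>-1$), together with the radial homeomorphism $\Psi(p):=\tfrac{p}{|p|}h(|p|)$ of $\R^n$ ($\Psi(0):=0$), for which $|\det D\Psi(p)|=h'(|p|)\,(h(|p|)/|p|)^{n-1}$ when $p\neq0$. Using $h(t)\simeq t^{s(\Phi)+1}$ for $t\leq1$, $h(t)/t\lesssim t^{i(\Phi)}$ for $t\geq1$ (again $i(\Phi)>-1$), and the lower bound on $\Phi$ above, a direct computation gives $|\det D\Psi(p)|\,\Phi(x,|p|)^{-n}\leq C_0\equiv C_0(n,i(\Phi),s(\Phi),L,\nu_0)$ for all $p\neq0$. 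Applying $\Psi$ to the inclusion $B_{M/d}(0)\subseteq\overline{Dw(\mathcal C\cap\Omega)}$ and then the area formula to $\Psi\circ Dw$ on $\mathcal C\cap\Omega\cap\{v>0\}$ (with a routine approximation near $\{Dw=0\}$) yields
\[
|B_1|\,h(M/d)^n\;=\;\bigl|\Psi(B_{M/d}(0))\bigr|\;\leq\;\int_{\mathcal C\cap\Omega\cap\{v>0\}}\!\!|\det D\Psi(Dw)|\,|\det D^2w|\,dx\;\leq\;\frac{C_0}{(n\lambda)^n}\,\|f^-\|_{L^n(\Gamma^+(u^+))}^n.
\]
Since $h(t)\simeq t^{s(\Phi)+1}$ near $0$ and $h(t)\simeq t^{i(\Phi)+1}$ near $\infty$, inverting $h$ and distinguishing the cases $M/d\leq1$ and $M/d\geq1$ gives $M/d\leq c\bigl(\max\{\|f^-\|_{L^n(\Gamma^+(u^+))}^{1/(1+i(\Phi))},\,\|f^-\|_{L^n(\Gamma^+(u^+))}^{1/(1+s(\Phi))}\}+1\bigr)$ with $c\equiv c(n,\lambda,i(\Phi),s(\Phi),L,\nu_0)$; recalling $\sup_\Omega u=M+\ell$, this is exactly \eqref{thm_ABP:1}.

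The hard part will be making the second step rigorous for a mere viscosity subsolution: the a.e.\ twice differentiability of the concave envelope on its contact set, the validity of $(Dw,D^2w)\in J^{2,+}v$ there, the covering $B_{M/d}\subseteq\overline{Dw(\mathcal C\cap\Omega)}$ using only interior contact points with $v>0$, and the integrability and (local) Lipschitz facts needed to run the area formula for $\Psi\circ Dw$ near $\{Dw=0\}$. Once that input is in hand, the weight-dependent computations — the lower bound for $\Phi$ and the verification of the scaling properties of $h$ — are routine.
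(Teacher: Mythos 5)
Your core mechanism is the same as the paper's: bound $|\det D^2w|$ at upper contact points by $\left(\frac{f^-}{n\lambda\,\Phi(x,|Dw|)}\right)^n$ via the subsolution property and $D^2w\leq 0$, then cancel the factor $\Phi^{-n}$ by measuring the gradient image with a weight comparable to $\min\{|p|^{i(\Phi)n},|p|^{s(\Phi)n}\}$, and finally invert a two-regime radial profile to produce the exponents $\frac{1}{1+i(\Phi)}$ and $\frac{1}{1+s(\Phi)}$. Your radial map $\Psi$ with $|\det D\Psi(p)|\simeq\min\{|p|^{i(\Phi)n},|p|^{s(\Phi)n}\}$ is just the change-of-variables form of the weighted inequality $\int_{B_R}g(z)\,dz\leq\int_{\Gamma^+_R(u^+)}g(Du)|\det D^2u|\,dx$ that the paper imports from \cite[Lemma 3.1]{Ju1} with $g(z)=\min\{|z|^{i(\Phi)n},|z|^{s(\Phi)n}\}$ (regularized by $\left(\frac{|z|}{|z|+\delta}\right)^{-i(\Phi)n}$ in the singular case, the analogue of your "approximation near $\{Dw=0\}$"), and your inversion of $h$ reproduces the paper's two-regime computation of $I_1$. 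The reductions you list (the lower bound $\Phi(x,t)\geq\frac{\nu_0}{L}\min\{t^{i(\Phi)},t^{s(\Phi)}\}$, passing from \eqref{thm_ABP:1} to \eqref{thm_ABP:2} via $-u$ and $\tilde F(M):=-F(-M)$, and deducing \eqref{thm_ABP:3}) are fine.

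The genuine gap is the step you defer. You run the argument directly on the concave envelope $w$ of a merely continuous viscosity subsolution and assert that Alexandrov a.e.\ twice differentiability plus the area formula give $|\Psi(B_{M/d})|\leq\int_{\mathcal{C}\cap\Omega}|\det D\Psi(Dw)|\,|\det D^2w|\,dx$. Concavity and a.e.\ second differentiability alone do not yield this: for a general concave function the gradient (superdifferential) image of a set can strictly exceed the integral of the a.e.\ Hessian determinant, because the associated Monge--Amp\`ere measure may carry singular mass (a concave cone is the standard example: its vertex has gradient image of positive measure while $\det(-D^2w)=0$ a.e.), so the inequality you need goes in the wrong direction. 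The classical repair, as in \cite{CC1}, is to use the equation to show the envelope is $C^{1,1}$ on the contact set; but that is exactly where the present degeneracy bites, since at a contact point the PDE only gives $-\tr(D^2w)\leq f^-/\bigl(\lambda\,\Phi(x,|Dw|)\bigr)$, and for $i(\Phi)\geq 0$ this bound can blow up as $|Dw|\to 0$, so no uniform second-order touching is available near contact points with small slope. Handling this is precisely the content of \cite{Im1, DFQ1, Ju1}, and it is why the paper structures the proof differently: Part~1 proves the estimate for subsolutions in $C^2(\Omega)\cap C(\overline\Omega)$, where $u$ itself is an admissible test function and the area formula for $Du$ on $\Gamma^+_R(u^+)$ is classical, and Part~2 passes to continuous viscosity subsolutions by sup-convolution following \cite{Ju1} and \cite{DFQ1}. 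As written, your proposal is incomplete at this foundational step; it would be repaired either by carrying out the sup-convolution approximation (the paper's route) or by genuinely establishing the envelope regularity for this degenerate/singular operator — neither of which is routine — while the weight-dependent computations you present are correct and coincide with the paper's Steps 1--2.
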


\begin{proof}
	The proof consists of two parts. In the first part, we prove the above theorem for the viscosity subsolution $u\in C^{2}(\Omega)\cap C(\overline{\Omega})$. In the second part, we consider $u\in C(\overline{\Omega})$ via approximation based on the sup convolutions. 

\textbf{Part 1.} Suppose that the subsolution $u$ belongs to $C^{2}(\Omega)\cap C(\overline{\Omega})$. Let us define 
	\begin{align*}
		R_0\equiv R_0(u):= \frac{1}{\diam(\Omega)}\left(\sup\limits_{x\in\Omega}u(x)-\sup\limits_{x\in\partial\Omega}u^{+}\right).
	\end{align*}
The purpose is to obtain a certain estimate on $R_0$ in terms of $\norm{f^{-}}_{L^{n}(\Gamma^{+}(u^{+}))}$ and $\data$, from which the estimate \eqref{thm_ABP:1} follows. Applying \cite[Lemma 3.1]{Ju1}, for all $R<R_0$, we find 
\begin{align}
	\label{ABP:2}
	\I_{B_{R}(0)} g(z)\,dz \leq
	\I_{\Gamma_{R}^{+}(u^{+})}g(Du)\left|\det\left(D^{2}u\right)\right|\,dx
	\quad (\forall g\in C(\R^{n}),\, g\geq 0)
\end{align}
and 
\begin{align}
	\label{ABP:3}
	D^{2}u \leq 0 \text{ on } \Gamma_{R}^{+}(u^{+})\subset \{x\in \Omega : u(x)>0\}.
\end{align}

Let us now discuss the behavior of $Du$ in the set $\Gamma_{R}^{+}(u^{+})$. Let $x_0\in \Gamma_{R}^{+}(u^{+})$ be any point. If $Du(x_0)\neq 0$, then we are able to take $u$ as a test function in the definition of viscosity subsolution. In turn, we have 
\begin{align*}
	\Phi(x_0,|Du(x_0)|)F(D^2 u(x_0)) \geq f(x_0).
\end{align*}
Then we see 
\begin{align*}
	-f^{-}(x_0) \leq f(x_0) \leq \Phi(x_0,|Du(x_0)|)F(D^{2}u(x_0))
	\leq
	\Phi(x_0,|Du(x_0)|) P_{\lambda,\Lambda}^{+}(D^2u(x_0)).
\end{align*}
Recalling $D^2u(x_0)\leq 0$ by \eqref{ABP:3}, we find $P_{\lambda,\Lambda}^{+}(D^{2}u(x_0)) = \lambda\tr(D^{2}u(x_0))$ and 
\begin{align}
	\label{ABP:6}
	\left(\frac{-\tr(D^2u(x_0))}{n} \right)^{n} \leq \left(\frac{f^{-}(x_0)}{n\lambda\Phi(x_0,|Du(x_0)|)} \right)^{n}.
\end{align}
If $Du(x_0)=0$ and $D^2 u(x_0)\neq 0$, then $x_0$ is a critical point of $u$. On the other hand, recalling again \eqref{ABP:3}, we have $D^2u(x_0)<0$, which means that $x_0$ is a non-degenerate critical point of $u$. However, the set of non-degenerate critical points of $u$ is countable since $u\in C^2(\Omega)$. 

Let us recall also the following classical inequality,
\begin{align*}
	\det(A)\det(B) \leq \left(\frac{\tr(AB)}{n} \right)^{n}
	\text{ for all } A,B \in \mathcal{S}(n)\text{ with } A,B\geq 0.
\end{align*}
In turn, the last display together with \eqref{ABP:3} and \eqref{ABP:6} yields
\begin{align}
	\label{ABP:8}
	\left|\det D^{2}u(x) \right| \leq \left( \frac{f^{-}(x)}{n\lambda\Phi(x,|Du(x)|)} \right)^{n}
\end{align}
for all $x\in \Gamma_{R}^{+}(u^{+})\setminus\mathcal{U}$, where $\mathcal{U} = \{x\in \Gamma_{R}^{+}(u^{+}) : Du(x)=0\}$. Now we consider two steps depending on the sign of $i(\Phi)$.

\textbf{Step 1. $i(\Phi)\geq 0$:} Let us select $g(z) = \min\left\{|z|^{i(\Phi)n}, |z|^{s(\Phi)n}\right\}$ in \eqref{ABP:2}. In turn, recalling \eqref{ABP:8}, we find 
\begin{align*}
	I_{1}&:=\I_{B_{R}(0)}\min\left\{|z|^{i(\Phi)n}, |z|^{s(\Phi)n}\right\}\,dz
	\notag\\&
	\leq
	\I_{\Gamma_{R}^{+}(u^{+})\setminus \mathcal{U}}\min\left\{|Du|^{i(\Phi)n}, |Du|^{s(\Phi)n}\right\}\left( \frac{f^{-}}{n\lambda\Phi(x,|Du|)} \right)^{n}\,dx
	\notag\\&
	\leq
	\frac{L^{n}}{n^{n}\lambda^{n}\nu_{0}^{n}}\I_{\Gamma_{R}^{+}(u^{+})} (f^{-})^{n}\,dx,
\end{align*}
where we have used \ref{a2}. On the other hand, by co-area formula, we have 
\begin{align*}
	I_{1} &= \I_{0}^{R}\min\left\{t^{i(\Phi)n}, t^{s(\Phi)n}\right\}\I_{\partial B_{t}(0)}\,dSdt
	= n \omega_{n}\I_{0}^{R}\min\left\{t^{i(\Phi)n}, t^{s(\Phi)n}\right\}t^{n-1}\,dt
	\notag\\&
	=
	\left\{\begin{array}{lr}
        \frac{\omega_{n}R^{(i(\Phi)+1)n}}{i(\Phi)+1}- \frac{\omega_{n}(s(\Phi)-i(\Phi))}{(i(\Phi)+1)(s(\Phi)+1)} & \text{if } R\geq 1,\\
        \frac{\omega_{n}R^{(s(\Phi)+1)n}}{s(\Phi)+1} & \text{if } R < 1.
        \end{array}\right.
\end{align*}
Combining the last two displays, we arrive at \eqref{thm_ABP:1}.

\textbf{Step 2: $-1<i(\Phi)<0$.} In this case, we select $g(z) = \left( \frac{|z|}{|z|+\delta} \right)^{-i(\Phi)n}\min\left\{ |z|^{i(\Phi)n},|z|^{s(\Phi)n} \right\}$ for an arbitrary  number $\delta>0$. Clearly, $g\in C(\R^{n})$ and so we have
\begin{align*}
	I_{2}(\delta)&:= \I_{B_{R}(0)}\left( \frac{|z|}{|z|+\delta} \right)^{-i(\Phi)n}\min\left\{ |z|^{i(\Phi)n},|z|^{s(\Phi)n} \right\}\,dz
	\notag\\&
	\leq
	\I_{\Gamma_{R}^{+}(u^{+})\setminus\mathcal{U}}\left( \frac{|Du|}{|Du|+\delta} \right)^{-i(\Phi)n}\min\left\{ |Du|^{i(\Phi)n},|Du|^{s(\Phi)n} \right\}\left(\frac{f^{-}}{n\lambda\Phi(x,|Du|)} \right)^{n}\,dx
	\notag\\&
	\leq
	\frac{L^{n}}{n^{n}\lambda^{n}\nu_{0}^{n}}\I_{\Gamma_{R}^{+}(u^{+})}(f^{-})^{n}\,dx,
\end{align*}
where we have used again \ref{a2} and the fact that $i(\Phi)<0$. By using co-area formula and recalling that $-1<i(\Phi)<0$, we get 
\begin{align*}
	I_{2}(\delta)&=
	 n \omega_{n}\I_{0}^{R}\left(\frac{t}{t+\delta} \right)^{-i(\Phi)n} \min\left\{ t^{i(\Phi)n},t^{s(\Phi)n} \right\}t^{n-1}\,dt.
\end{align*}
By applying Lebesgue's dominated convergence theorem, we conclude
\begin{align*}
\lim_{\delta \to 0^{+}}I_{2}(\delta)=&= n \omega_{n}\I_{0}^{R}\min\left\{t^{i(\Phi)n}, t^{s(\Phi)n}\right\}t^{n-1}\,dt
	\notag\\&
	=
	\left\{\begin{array}{lr}
        \frac{\omega_{n}R^{(i(\Phi)+1)n}}{i(\Phi)+1}- \frac{\omega_{n}(s(\Phi)-i(\Phi))}{(i(\Phi)+1)(s(\Phi)+1)} & \text{if } R\geq 1,\\
        \frac{\omega_{n}R^{(s(\Phi)+1)n}}{s(\Phi)+1} & \text{if } R < 1.
        \end{array}\right.
\end{align*}

Combining the last two displays, we get \eqref{thm_ABP:1}.

\textbf{Part 2.} Let $u\in C(\overline{\Omega})$. Since we have ABP estimates for $u\in C^{2}(\Omega)\cap C(\overline{\Omega})$, the remainder of the proof can be argued similarly as in the proof of \cite[Theorem 1.1]{Ju1}, see also \cite{DFQ1}.
\end{proof}

%%%%%%%%%%%%%%%%%%%%%%%%%%%%%%%%%%%%%%%%%%%%%%%%%%%%%%%%%%%%%%%%%%%%%%%%%%%%%%%%%%%%%
%%%%%%%%%%%%%%%%%%%%%%%%%%%%%%%%%%%%%%%%%%%%%%%%%%%%%%%%%%%%%%%%%%%%%%%%%%%%%%%%%%%%%
%%%%%%%%%%%%%%%%%%%%%%%%%%%%%%%%%%%%%%%%%%%%%%%%%%%%%%%%%%%%%%%%%%%%%%%%%%%%%%%%%%%%
%%%%%%%%%%%%%%%%%%%%%%%%%%%%%%%%%%%%%%%%%%%%%%%%%%%%%%%%%%%%%%%%%%%%%%%%%%%%%%%%%%%%%
%%%%%%%%%%%%%%%%%%%%%%%%%%%%%%%%%%%%%%%%%%%%%%%%%%%%%%%%%%%%%%%%%%%%%%%%%%%%%%%%%%%%%

\section{Local Lipschitz estimates up to the boundary}\label{C01}
By \Cref{thm_ABP}, any viscosity solution of \eqref{me} is bounded in $L^{\infty}(\Omega)$ under the assumptions \ref{a1}-\ref{a3}. In this section, to derive further Lipschitz estimates up to the boundary as in \cite{BD1, dSJRR1}, we consider a bounded viscosity solution of 
\begin{align} \label{locDP} 
\left\{ \begin{array}{ll} 
\Phi(y, |Du|) F(D^2u)=f(y) & \text{in $B \cap \{y_n > \phi(y')\}$},\\
u(y)=g(y) & \text{on $B \cap \{y_n=\phi(y')\}$},
\end{array} \right.
\end{align}
where the function $\phi$ is introduced in \Cref{preliminaries}.

\begin{remark}[Smallness regime]\label{rmk:small}
Here we verify that, for a bounded viscosity solution $u$ of 
\begin{align}\label{sr:0}
\left\{ \begin{array}{ll} 
\Phi(y, |\xi+Du|) F(D^2u)=f(y) & \text{in $B \cap \{y_n > \phi(y')\}$},\\
u(y)=g(y) & \text{on $B \cap \{y_n=\phi(y')\}$},
\end{array} \right.
\end{align} 
we are able to assume 
\begin{align}
	\label{small}
	\|u\|_{L^{\infty}(B_{1} \cap \{y_n > \phi(y')\})} \leq 1, \, \|g\|_{C^{1, \beta_g}(B_1 \cap \{y_n = \phi(y')\})} \leq 1  \text{ and }\|f\|_{L^{\infty}(B_{1} \cap \{y_n > \phi(y')\})} \leq \varepsilon_{0},
\end{align}
for some constant $0<\varepsilon_{0}<1$ small enough, and also $\nu_0=\nu_1=1$ in \ref{a2}.
In order to consider the problem in a smallness regime as in \eqref{small}, for a fixed ball $B_{r}(x)\subset B$, we define $\bar{u} : B_{1} \cap \{y_{n} >\bar{\phi}(y')\} \rightarrow \R$ by
\begin{align*}
	\bar{u}(y):= \frac{u(ry+x)}{K}
\end{align*}
for a function $\bar{\phi}$ and positive constants $K\geq 1\geq r$ to be determined later. It can be seen that $\bar{u}$ is a viscosity solution of 
\begin{align} \label{sr:2} 
\left\{ \begin{array}{ll} 
\bar{\Phi}(y,|\bar{\xi}+D\bar{u}|)\bar{F}(D^{2}\bar{u})=\bar{f}(y) & \text{in $B_{1} \cap \{y_n > \bar{\phi}(y')\}$},\\
\bar{u}(y)=\bar{g}(y) & \text{on $B_{1} \cap \{y_n=\bar{\phi}(y')\}$},
\end{array} \right.
\end{align}
where 
\begin{align*}
	\displaystyle
	\bar{F}(M)&:= \frac{r^{2}}{K}F\left(\frac{K}{r^{2}}M \right),\quad
	\bar{\Phi}(y,t) := \frac{\Phi\left(ry+x,\frac{K}{r}t\right)}{\Phi\left(ry+x,\frac{K}{r}\right)},\quad
	\bar{f}(y) := \frac{r^{2}}{\Phi\left(ry+x,\frac{K}{r}\right)K}f(ry+x),\\
	 \bar{\xi}&:= \frac{r}{K}\xi,\quad
	 \bar{\phi}(y'):=\frac{\phi(ry'+x')-x_{n}}{r} \text{ and }
	 \bar{g}(y):=\frac{g(ry+x)}{K}.
\end{align*}
Note that $\bar{F}$ is still a uniformly $(\lambda,\Lambda)$-elliptic operator, the map $\displaystyle t\mapsto \frac{\bar{\Phi}(y,t)}{t^{i(\Phi)}}$ is almost non-decreasing and the map $\displaystyle t\mapsto \frac{\bar{\Phi}(y,t)}{t^{s(\Phi)}}$ is almost non-increasing with the same constants $L\geq 1$ and $s(\Phi)\geq i(\Phi)>-1$ as in \ref{a2}, and $\bar{\Phi}(y,1)=1$ for all $y\in B_{1}$. It is immediate from the choice of $r$ that $\|D^{2}\bar{\phi}\|_{\infty} \leq \|D^{2}\phi\|_{\infty}$ and
\begin{align*}
 \|\bar{g}\|_{C^{1, \beta_g}(B_1 \cap \{y_n = \bar{\phi}(y')\})} \leq \frac{1}{K}  \|g\|_{C^{1, \beta_{g}}(\partial \Omega)}.
\end{align*}
Moreover, the assumption \ref{a2} implies
\begin{align*}
	\norm{\bar{f}}_{L^{\infty}(B_{1} \cap \{y_n > \bar{\phi}(y')\})}
	\leq \frac{L r^{2+i(\Phi)}}{\nu_{0} K^{1+i(\Phi)}}\norm{f}_{L^{\infty}(\Omega)}.
\end{align*}
By recalling $i(\Phi)> -1$ and setting 
\begin{align*}
	K:= 2\left(1+\norm{u}_{L^{\infty}(\Omega)} + \|g\|_{C^{1, \beta_{g}}(\partial \Omega)} + \left[\frac{L}{\nu_0}\norm{f}_{L^{\infty}(\Omega)} \right]^{\frac{1}{1+i(\Phi)}}\right)
\end{align*}
and 
\begin{align*}
	r:= \varepsilon_{0}^{\frac{1}{2+i(\Phi)}},
\end{align*}
we see that $\bar{u}$ solves the equation \eqref{sr:2} under the smallness regime in \eqref{small}.
\end{remark}

If we have special conditions on a viscosity supersolution (or subsolution), then we can apply the comparison principle without an additional structure condition such as \ref{a5} in \Cref{Perron}. See \Cref{Perron} for more comments on the comparison principle.

\begin{lemma}[Comparison principle I] \label{specialCP}
	Let $f_{1}, f_{2} \in C(\overline{\Omega})$ with $f_{1} >f_{2}$ and $v \in C(\overline{\Omega})$ be a viscosity subsolution of $\Phi(y, |Du|)F(D^{2}u)=f_{1}(y)$ in $\Omega$. Moreover, let $w \in C(\overline{\Omega}) \cap C^{2}(\Omega)$ be a viscosity supersolution of $\Phi(y, |Du|)F(D^{2}u)=f_{2}(y)$. If $v\leq w$ on $\partial\Omega$, then $v\leq w$ in $\Omega$.
\end{lemma}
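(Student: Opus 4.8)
The plan is to argue by contradiction, assuming that $M := \sup_{\overline{\Omega}}(v-w) > 0$ and that this maximum is attained at an interior point $x_0 \in \Omega$ (it cannot be on $\partial\Omega$ since $v \leq w$ there). The key advantage here is that $w \in C^2(\Omega)$, so $w$ itself can be used directly as a test function for the subsolution $v$: since $v - w$ has a local (indeed global) maximum at $x_0$, the definition of viscosity subsolution applied with the $C^2$ test function $w$ gives
\begin{align*}
\Phi(x_0, |Dw(x_0)|) F(D^2 w(x_0)) \geq f_1(x_0).
\end{align*}
On the other hand, $w$ being a classical ($C^2$) supersolution means that at every point, and in particular at $x_0$,
\begin{align*}
\Phi(x_0, |Dw(x_0)|) F(D^2 w(x_0)) \leq f_2(x_0).
\end{align*}
Chaining these two inequalities yields $f_1(x_0) \leq f_2(x_0)$, contradicting the strict inequality $f_1 > f_2$ on $\overline{\Omega}$. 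Hence $M \leq 0$, i.e. $v \leq w$ in $\Omega$.

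The one technical subtlety is the passage from a $C^2$ supersolution to a pointwise classical inequality. Since $w \in C^2(\Omega)$, for any $x \in \Omega$ one can test $w$ against itself (take $\varphi = w$ in the definition, noting $w - \varphi \equiv 0$ has a local minimum everywhere), which immediately yields $\Phi(x,|Dw(x)|)F(D^2w(x)) \leq f_2(x)$ for all $x \in \Omega$; no issue at critical points of $w$ arises because the test-function formulation does not require $Dw \neq 0$. A second point to check is that the supremum $M$ is actually attained: this follows from $v - w \in C(\overline{\Omega})$ and $\overline{\Omega}$ compact. Thus the only place the maximum can sit is the interior (by the boundary hypothesis and $M > 0$), which is exactly what the argument needs.

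I do not anticipate a genuine obstacle here — the strictness $f_1 > f_2$ together with the $C^2$-regularity of $w$ makes the comparison essentially elementary, sidestepping the usual doubling-of-variables machinery (Jensen--Ishii lemma) that would be forced by a merely continuous $w$ and the degenerate/singular factor $\Phi$. If one wanted to be careful about $\Phi(x,|Dw(x)|)$ possibly vanishing (when $i(\Phi) > 0$ and $Dw(x_0) = 0$), one notes that in that degenerate case both displayed inequalities degenerate to $0 \geq f_1(x_0)$ and $0 \leq f_2(x_0)$, still giving $f_1(x_0) \leq 0 \leq f_2(x_0)$, again contradicting $f_1 > f_2$; so the conclusion is unaffected. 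This robustness across the cases $i(\Phi) \geq 0$ and $-1 < i(\Phi) < 0$ is precisely why this "Comparison principle I" is stated with the $C^2$ hypothesis on $w$ — it will later be applied with $w$ a smooth barrier.
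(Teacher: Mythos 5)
Your proposal is correct and follows essentially the same argument as the paper: contradiction at an interior maximum of $v-w$, using the $C^2$ function $w$ as a test function for the subsolution $v$ and the pointwise supersolution inequality for $w$, which forces $f_1(\hat{x}) \leq f_2(\hat{x})$ and contradicts $f_1 > f_2$. Your extra remarks on attainment of the maximum and on testing $w$ against itself (including at critical points of $w$) are accurate elaborations of steps the paper leaves implicit.
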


\begin{proof}
	By contradiction, we suppose that
	\begin{align*}
	 	\max_{x \in \overline{\Omega}} (v(x)-w(x)) >0
	\end{align*}
	and the maximum is achieved at a point $\hat{x} \in \Omega$. Since $v$ is a viscosity subsolution, $w \in C^{2}$ and $v-w$ has a local maximum at $\hat{x}$, the definition of viscosity subsolutions  yields
	\begin{align*}
	 	\Phi(\hat{x}, |Dw(\hat{x})|)F(D^{2}w(\hat{x})) \geq f_{1}(\hat{x}).
	\end{align*}
	On the other hand, since $w$ is a viscosity supersolution, we have
	\begin{align*}
	\Phi(\hat{x}, |Dw(\hat{x})|)F(D^{2}w(\hat{x})) \leq f_{2}(\hat{x}),
	\end{align*}
	which leads to the contradiction.
\end{proof}

\begin{lemma}\label{distlem}
	Let $g \in C^{1, \beta_g}(\partial \Omega)$. Let $d$ be the distance to the hypersurface $\{y_n=\phi(y')\}$.\\
	Then for every $r \in (0, 1)$ and $\gamma \in (0, 1)$, there exists $\delta_0>0$ depending on $\|f\|_{L^{\infty}(B_{1} \cap \{y_n > \phi(y')\})}$, $\lambda$, $\Lambda$, $\Omega$, $r$, $L$, $\nu_0$ and $\mathrm{Lip}_g(\partial \Omega)$ such that for every $0<\delta<\delta_0$, if $u$ is a viscosity solution of \eqref{locDP}
	with $\|u\|_{L^{\infty}(B_{1} \cap \{y_n > \phi(y')\})}\leq 1$, then
	\begin{align*}
	|u(y', y_n)-g(y')| \leq \frac{6}{\delta} \frac{d(y)}{1+d(y)^{\gamma}} \quad \text{in $B_r(0) \cap \{y_n>\phi(y')\}$.}
	\end{align*}
\end{lemma}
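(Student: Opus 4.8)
\emph{Setup and reduction.} The plan is to trap $u$ near $\partial\Omega$ between two barriers built from a (regularised) distance function composed with a concave profile, and to close with the comparison principle \Cref{specialCP}. It suffices to prove the one-sided bound $u(y)\le g(y')+\tfrac{6}{\delta}\,\tfrac{d(y)}{1+d(y)^{\gamma}}$ in $B_r(0)\cap\{y_n>\phi(y')\}$ for viscosity subsolutions $u$: the reverse inequality follows by applying the same statement to $-u$, a viscosity solution of the same type of problem with $F$ replaced by $\widetilde F(M):=-F(-M)$ (again uniformly $(\lambda,\Lambda)$-elliptic), $f$ by $-f$ and $g$ by $-g$.

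\emph{The barrier.} Fix an interior point $\bar y\in B_r(0)\cap\{y_n>\phi(y')\}$, let $\hat y=(\hat y',\phi(\hat y'))$ be its closest point on $\{y_n=\phi(y')\}$, and set $d_0:=d(\bar y)=|\bar y-\hat y|$. If $d_0$ exceeds a fixed threshold (depending on $r$ and $\Omega$), the bound is immediate from $\|u\|_{L^\infty}\le1$ once $\delta$ is small, so assume $d_0$ small. Put $\beta:=\min\{\gamma,\beta_g\}\in(0,1)$ and $h(t):=t/(1+t^{\beta})$, so $h$ is increasing and concave, $h(0)=0$, $h'(0)=1$, and $h''(t)\le -c(\beta)\,t^{\beta-1}$ for small $t$. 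Fix a small radius $s=s(n,\lambda,\Lambda,\Omega,\gamma,r)$ with $Q:=B_s(\hat y)\cap\{y_n>\phi(y')\}\subset B_1\cap\{y_n>\phi(y')\}$ on which $d$ is $C^{1,1}$ with $\|D^2 d\|_{L^\infty(Q)}\le C_\Omega$, $|Dd|\asymp1$ and $d\equiv0$ on $\{y_n=\phi(y')\}$ (regularising $d$ if one insists on a $C^2$ barrier). The upper barrier I would use is
\[ w(y):=g(\hat y')+\nabla g(\hat y')\cdot(y'-\hat y')+A\,|y-\hat y|^{1+\beta}+M\,h(d(y))+N\,\chi(|y-\hat y|), \]
with $A:=[g]_{C^{1,\beta_g}(\partial\Omega)}$, $N:=2\big(1+\|g\|_{L^\infty}+\|\nabla g\|_{L^\infty}\big)$, $\chi\in C^\infty$ a cutoff vanishing on $[0,s/2]$ and increasing to $\chi(s)=1$, and $M:=6/\delta$; since $\hat y\notin Q$, every term is $C^2$ in the open set $Q$.

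\emph{Verification and conclusion.} First, $w\ge u$ on $\partial Q$: on the part lying in $\{y_n=\phi(y')\}$ one has $u=g$, and the first order Taylor estimate for $g\in C^{1,\beta_g}$ together with $|y'-\hat y'|\le|y-\hat y|\le1$ and $\beta\le\beta_g$ gives $g(y')\le g(\hat y')+\nabla g(\hat y')\cdot(y'-\hat y')+A\,|y-\hat y|^{1+\beta}\le w(y)$; on $\partial B_s(\hat y)\cap\{y_n>\phi(y')\}$ the term $N\chi(s)=N$ dominates and $w\ge N-\|g\|_{L^\infty}-\|\nabla g\|_{L^\infty}s\ge1\ge u$. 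Second, and this is the heart of the matter, $w$ is a strict classical supersolution in $Q$: using $F(M_1+M_2)\le F(M_1)+P_{\lambda,\Lambda}^{+}(M_2)$, $F(0)=0$, $D^2\big(\nabla g(\hat y')\cdot(y'-\hat y')\big)=0$, and that $\chi'$ (supported in $[s/2,s]$) contributes only a bounded tangential term, one finds
\[ F(D^2 w)\;\le\;C A\,|y-\hat y|^{\beta-1}\;+\;\lambda M\,h''(d)\;+\;C_\Omega M\;+\;C(s)N\;\le\;C A\,|y-\hat y|^{\beta-1}\;-\;c\lambda M\,d^{\beta-1}\;+\;C_\Omega M\;+\;C(s)N. \]
Since $d(y)\le|y-\hat y|\le s<1$ and $\beta-1<0$ we have $d^{\beta-1}\ge|y-\hat y|^{\beta-1}\ge s^{\beta-1}$; choosing first $s$ small (to absorb $C_\Omega M$ and $C(s)N$) and then $\delta$ small depending on $n,\lambda,\Lambda,\Omega,\gamma$ and $A$, the term $C A|y-\hat y|^{\beta-1}$ is swallowed and $F(D^2 w)\le -c_0 M$ in $Q$. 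Moreover $|Dw|\asymp M$ after a further smallness of $\delta$, so \ref{a2} and $i(\Phi)>-1$ give $\Phi(y,|Dw|)\gtrsim M^{i(\Phi)}$, whence $\Phi(y,|Dw|)F(D^2w)\le -c_0 M\,\Phi(y,|Dw|)\lesssim -M^{1+i(\Phi)}<f(y)-1$ for $\delta$ small. Thus $w$ is a classical supersolution of $\Phi(y,|Du|)F(D^2u)=f-1$ while $u$ is a viscosity subsolution of $\Phi(y,|Du|)F(D^2u)=f$ with $f>f-1$, so the argument of \Cref{specialCP} on $Q$ gives $u\le w$ in $Q$. Evaluating at $\bar y$ (where $|\bar y-\hat y|=d_0\le s/2$, so $\chi$ vanishes) and converting $g(\hat y')+\nabla g(\hat y')\cdot(\bar y'-\hat y')$ back to $g(\bar y')$ up to the error $A\,d_0^{1+\beta_g}$, we get $u(\bar y)\le g(\bar y')+C A\,d_0^{1+\beta}+M\,h(d_0)$; since $d_0^{1+\beta}\le d_0\le2\,\tfrac{d_0}{1+d_0^{\gamma}}$ and $h(d_0)\le2\,\tfrac{d_0}{1+d_0^{\gamma}}$ for $d_0\le1$, the right-hand side is $\le(2CA+2M)\tfrac{d_0}{1+d_0^{\gamma}}$, and replacing $M=6/\delta$ by $M=3/\delta-CA$ in the construction makes this $\le\tfrac{6}{\delta}\tfrac{d_0}{1+d_0^{\gamma}}$, which fixes $\delta_0$.

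\emph{Main obstacle.} The delicate point is the balance in the supersolution computation. Matching the boundary datum near $\hat y$ forces a corrector growing at least linearly from $\hat y$; the $C^{1,\beta_g}$-regularity of $g$ allows replacing it by $A|y-\hat y|^{1+\beta}$, but even this has a Hessian blowing up like $|y-\hat y|^{\beta-1}$, and the only thing able to dominate it is the equally singular second derivative $h''(d)\sim-d^{\beta-1}$ of the distance profile — which is why the profile exponent is taken $\le\beta_g$, why one uses the non-smooth profile $d/(1+d^{\beta})$ rather than a $C^2$ function of $d$, and why the elementary bound $d\le|y-\hat y|$ is essential. Simultaneously the gradient of $w$ must be kept of size $\sim M$ so that the weight $\Phi(y,|Dw|)$ does not collapse; here $i(\Phi)>-1$ is precisely what makes $M\cdot M^{i(\Phi)}\to\infty$, so that the barrier can beat $\|f\|_{L^\infty}$. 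Finally one must arrange all auxiliary parameters ($s$, the cutoff scale, $N$) to be independent of $M$, so that every ``$M$ large'' requirement reduces to a single smallness threshold $\delta_0$.
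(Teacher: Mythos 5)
Your argument is correct in substance and runs on the same engine as the paper's proof: a concave profile of the distance function scaled like $1/\delta$, whose singular second derivative dominates all remaining Hessian contributions, a gradient of size $\sim 1/\delta$ so that \ref{a2} together with $i(\Phi)>-1$ makes the barrier beat $\|f\|_{L^{\infty}}$, and a closing appeal to \Cref{specialCP}. The implementation, however, is genuinely different. The paper reduces to $g\equiv 0$ and uses a single global barrier on the strip $\{d<\delta\}$, namely $\tfrac{2}{\delta}\tfrac{d}{1+d^{\gamma}}$ augmented by the lateral cubic term $(|y|-r)^{3}$, and for nonvanishing $g$ it simply refers to \cite{BD4}; you instead build, around the nearest boundary point of each interior point, a local barrier carrying the first-order Taylor polynomial of $g$ plus the corrector $A|y-\hat y|^{1+\beta}$ with $\beta=\min\{\gamma,\beta_g\}$ and a lateral cutoff, resolving the competition between the two singular Hessians via $d\le |y-\hat y|$ and $M\sim 1/\delta$ large. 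What your route buys is a self-contained treatment of general boundary data; what it costs is that your $\delta_0$ depends on the H\"older seminorm $[g]_{C^{1,\beta_g}(\partial\Omega)}$ (through $A$) rather than only on $\mathrm{Lip}_g(\partial\Omega)$ as the statement advertises — harmless in all applications, since the smallness regime of \Cref{rmk:small} normalizes $\|g\|_{C^{1,\beta_g}}\le 1$, but strictly a weaker dependency. Two small points to tidy: the term $C(s)N$ is not absorbed by shrinking $s$ (it grows as $s\downarrow 0$) but by taking $\delta$ small after $s$ is fixed, which is what your final parameter ordering in effect does; and $d$ is only $C^{1,1}$ near a $C^{1,1}$ hypersurface, so the regularization you mention (or an a.e./Jensen-type argument at the touching point) is indeed needed before invoking \Cref{specialCP} with a $C^{2}$ supersolution — a point the paper itself glosses over by asserting $d\in C^{2}$.
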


\begin{proof}
 We separate two cases: (i) $g \equiv 0$, (ii) $g$ is not identically zero.
	\begin{enumerate}[(i)]
		\item ($g \equiv 0$) In this case, we have $\|u\|_{L^{\infty}} \leq 1$ and so we will only consider the smaller set $\Omega_{\delta}:=\{y \in \Omega : d(y) <\delta\}$. Moreover, we choose $\delta_1>0$ such that if $d(y) <\delta_1$, then $d$ belongs to $C^2$ and $|D^2d| \leq K$ for some universal constant $K>0$.
		
		The proof relies on the construction of upper and lower barriers. For this purpose, we define a function $w \in C^{2}(\Omega_{\delta})$ by
		\begin{align*}
		w(y)=
		\left\{ \begin{array}{ll} 
		\frac{2}{\delta}\frac{d(y)}{1+d^{\gamma}(y)} & \text{for $|y|<r$},\\
		\frac{2}{\delta}\frac{d(y)}{1+d^{\gamma}(y)}+\frac{1}{(1-r)^3}(|y|-r)^3 & \text{for $|y| \geq r$}.
		\end{array} \right.
		\end{align*}
		By following the argument in \cite[Lemma 2.2]{BD4}, we have $w \geq u$ on $\partial(B_{1} \cap \{y_n>\phi(y')\} \cap \Omega_{\delta})$. Moreover, it is easily checked that $|Dw| \geq \frac{1}{4\delta}$ when $\delta \leq \frac{1-r}{12}$ and so if we choose $\delta<1/4$, then $|Dw| \geq 1$. Moreover, we can calculate
		\begin{align*}
			\mathcal{P}_{\lambda, \Lambda}^+(D^2w) \leq -2\gamma \delta^{\gamma-2}\lambda \frac{1+\gamma}{(1+\delta^{\gamma})^3}+\frac{2}{\delta}nK\Lambda+\frac{6n\Lambda}{(1-r)^2} \lesssim -\delta^{\gamma-2}+\delta^{-1}. 
		\end{align*}
		Since $\gamma-2<-1<0$, we can further choose $\delta \in (0,1)$ small enough so that $\mathcal{P}_{\lambda, \Lambda}^+(D^2w)<0$. Then, by recalling \ref{a2}, 
		\begin{align*}
		\Phi(x, |Dw|) \mathcal{P}_{\lambda, \Lambda}^+(D^2w) \lesssim -L\nu_0|Dw|^{i(\Phi)} (\delta^{\gamma-2}-\delta^{-1}) \leq -L\nu_0 (\delta^{\gamma-i(\Phi)-2}-\delta^{-1-i(\Phi)}).
		\end{align*}
		Since $\gamma-i(\Phi)-2 <-1-i(\Phi)<0$, we finally choose $\delta \in (0,1)$ small enough so that 
		\begin{align*}
		\Phi(x, |Dw|) \mathcal{P}_{\lambda, \Lambda}^+(D^2w) <-\|f\|_{\infty}-1.
		\end{align*}
		By applying \Cref{specialCP}, we conclude that 
		\begin{align*}
		u \leq w=\frac{2}{\delta}\frac{d(y)}{1+d^{\gamma}(y)} \quad \text{in $B_r(0) \cap \{y_n >\phi(y')\}$.}
		\end{align*}
		The lower bound for $u$ can be obtained in a similar argument.

		\item ($g$ is not identically zero) This case follows from the same argument as in \cite[Lemma 2.2]{BD4}.
	\end{enumerate}
	
\end{proof}

The main theorem in this section is the following boundary Lipschitz estimate, whose proof relies on \Cref{distlem} and the Ishii-Lions Lemma \cite{CIL1}.
\begin{theorem}[Lipschitz estimates for $\xi=0$]\label{basicreg}
	Let $g$ be a Lipschitz continuous function. Suppose that $u$ satisfies \eqref{locDP} with $\|u\|_{L^{\infty}(B_{1} \cap \{y_n > \phi(y')\})} \leq 1$. Then for every $r \in (0, 1)$, we have $u \in C^{0, 1}(B_r \cap \{y_n >\phi(y')\})$ and
	\begin{align} \label{basicest}
	\|u\|_{C^{0, 1}(B_r \cap \{y_n >\phi(y')\})} \leq C(n, \lambda, \Lambda, i(\Phi), s(\Phi), r, L, \mathrm{Lip}_g(\partial \Omega), \|f\|_{L^{\infty}(\Omega)}).
	\end{align}
\end{theorem}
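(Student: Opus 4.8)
The plan is to combine the boundary control furnished by \Cref{distlem} with an interior-flavored doubling/Ishii--Lions argument to bound $|u(x)-u(y)|$ by $C|x-y|$ uniformly for $x,y \in B_r \cap \{y_n>\phi(y')\}$. First, after the normalization in \Cref{rmk:small} we may assume $\nu_0=\nu_1=1$, $\|u\|_{L^\infty}\leq 1$, $\|g\|_{C^{1,\beta_g}}\leq 1$ and $\|f\|_{L^\infty}\leq\varepsilon_0$; since $g$ is merely Lipschitz here, the relevant output of \Cref{distlem} is that for each fixed radius (say slightly larger than $r$) and some admissible $\gamma\in(0,1)$ there is $\delta>0$, depending only on the stated data, with
\begin{align*}
|u(y',y_n)-g(y')| \leq \frac{6}{\delta}\,\frac{d(y)}{1+d(y)^\gamma} \leq \frac{6}{\delta}\,d(y)
\end{align*}
on the relevant half-ball. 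Together with the Lipschitz bound on $g$ along $\partial\Omega$ and the $C^{1,1}$ regularity of $\phi$, this yields the \emph{boundary Lipschitz control}: there is a constant $C_0$ depending on the data such that $|u(x)-u(z)|\leq C_0|x-z|$ whenever $z\in \partial\Omega\cap B$ and $x$ is arbitrary in the half-ball. In particular $|u(x)-u(y)|\leq C_0(d(x)+d(y)+|x-y|)$ for all $x,y$, which already settles the case when $x$ or $y$ is close to the boundary relative to $|x-y|$.

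Next I would treat the remaining case, where both points sit at distance comparable to or larger than $|x-y|$ from the boundary, by the standard viscosity doubling method. Fix $x_0,y_0$ in $B_{r'}\cap\{y_n>\phi(y')\}$ with $r<r'<1$ and consider
\begin{align*}
\Psi(x,y) = u(x)-u(y) - L_1\,\omega(|x-y|) - \frac{M}{2}\bigl(|x-x_0|^2+|y-y_0|^2\bigr),
\end{align*}
where $\omega(t)=t-\omega_0 t^{3/2}$ is a concave modulus on a small interval (so that one gains a favorable eigenvalue in the second-order term), $M$ is a large localization constant fixed by $\|u\|_{L^\infty}\leq 1$, and $L_1$ is to be chosen. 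If $\sup\Psi\leq 0$ for a suitable $L_1$, we are done; otherwise the supremum is attained at an interior pair $(\bar x,\bar y)$ with $\bar x\neq\bar y$, and we apply the Theorem on Sums (Ishii--Lions) to produce $(p_x+q,X),(p_y+q,Y)$ in the closures of the appropriate sub/superjets with $p_x=p_y=L_1 D\omega(|\bar x-\bar y|)$ of size $\approx L_1$, $q=O(M)$, and the matrix inequality controlling $X-Y$ with the gain coming from concavity of $\omega$. Feeding these into the two viscosity inequalities for $\Phi(\cdot,|Du|)F(D^2u)=f$ and dividing by $\Phi$ — here we use \ref{a2} to bound $\Phi(x,|p|)$ from above and below by comparable powers of $|p|$ once $|p|\approx L_1$ is large, and we handle the degenerate/singular small-gradient case by the localization term making $|p|$ bounded below, or by the alternative that $\bar x$ or $\bar y$ approaches the boundary and the first case applies — we get $P^-_{\lambda,\Lambda}(X-Y) \leq C(\|f\|_\infty + \text{lower order})$, which contradicts the ellipticity lower bound $P^-_{\lambda,\Lambda}(X-Y)\gtrsim L_1\omega_0|\bar x-\bar y|^{-1/2}$ once $L_1$ is chosen large in terms of the data. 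This yields $|u(x_0)-u(y_0)|\leq C|x_0-y_0|$ in the interior regime.

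Finally, combining the boundary estimate and the interior doubling estimate over overlapping balls, and undoing the scaling of \Cref{rmk:small} (which only affects constants through $K$ and $r=\varepsilon_0^{1/(2+i(\Phi))}$, hence through the listed data), gives $u\in C^{0,1}(B_r\cap\{y_n>\phi(y')\})$ with the bound \eqref{basicest}. The main obstacle is the gradient dependence: because $\Phi$ is singular or degenerate, the equation is not uniformly elliptic as a PDE in $u$, so one cannot directly invoke the classical Ishii--Lions Lipschitz estimate. The resolution is to note that in the doubling argument the test-function gradient $p_x\approx L_1 D\omega$ is \emph{large} (we are proving a Lipschitz bound with a big constant), so dividing the equation by $\Phi(x,|Du|)$ — whose size is pinned between $c|p|^{s(\Phi)}$ and $C|p|^{i(\Phi)}$ by the almost-monotonicity in \ref{a2} — converts the problem into one governed purely by the Pucci operators with a right-hand side that remains under control; the only delicate point is ruling out the scenario where the maximizing pair has small gradient, which is exactly where the localization term and the already-established boundary Lipschitz bound are used in tandem.
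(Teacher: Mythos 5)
Your proposal follows essentially the same route as the paper's proof: boundary control from \Cref{distlem} combined with an Ishii--Lions doubling argument built on the concave modulus $\omega(t)=t-\omega_0t^{3/2}$ plus a quadratic localization term, with the near-boundary configuration absorbed by taking the Lipschitz constant larger than the constant from \Cref{distlem} plus $\mathrm{Lip}_g(\partial\Omega)$; the organizational differences (two localization centers, a separate near-boundary regime instead of folding it into $\Psi$) are immaterial.

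One step, however, is stated inaccurately, and it is exactly where the hypothesis $i(\Phi)>-1$ enters. Assumption \ref{a2} gives, for $t\geq 1$, the bounds $\tfrac{\nu_0}{L}t^{i(\Phi)}\leq\Phi(x,t)\leq L\nu_1 t^{s(\Phi)}$ (your two-sided bound has the exponents interchanged), and it is the lower bound that you divide $f$ by. In the singular case $-1<i(\Phi)<0$ the resulting right-hand side is therefore not ``$C(\|f\|_{\infty}+\text{lower order})$'' but of size $\|f\|_{\infty}|q|^{-i(\Phi)}\lesssim\|f\|_{\infty}L_1^{-i(\Phi)}$, which grows with $L_1$; to control it one must also use the gradient upper bound $|q|\lesssim L_1$ coming from $\omega'\leq 1$. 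The contradiction survives only because this growth is sublinear ($-i(\Phi)<1$) and is beaten by the good term of order $\lambda\omega_0L_1|\bar{x}-\bar{y}|^{-1/2}$ --- precisely the final case split $i(\Phi)\geq 0$ versus $-1<i(\Phi)<0$ carried out in the paper; as literally written, your concluding inequality is valid only for $i(\Phi)\geq 0$. A smaller point: the small-gradient scenario is ruled out not by the localization term but by the fact that the $\omega$-part of the test gradient has size comparable to $L_1\omega'(|\bar{x}-\bar{y}|)$, which dominates the $O(M)$ localization contribution once $L_1$ is large, again as in the paper.
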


\begin{proof}
	Let $r_1 \in (r, 1)$ be fixed. For $x_0 \in B_r \cap \{y_n > \phi(y')\}$, we define
	\begin{align*}
		\Psi(x, y)\coloneqq u(x)-u(y)-M\omega(|x-y|)-L\left(|x-x_0|^2+|y-x_0|^2\right),
	\end{align*}
	where
	\begin{align*}
	\omega(s):=
	\begin{cases}
	s-\omega_0 s^{\frac{3}{2}} \ & \text{if $s \leq s_0:=\left(\frac{2}{3\omega_0}\right)^2$},\\
	\omega(s_0) \ & \text{if $s \geq s_0$.}
	\end{cases}
	\end{align*}
	We claim that for $L, M \gg 1$ large enough, 
	\begin{align}\label{ishiilions}
	\Psi(x, y) \leq 0 \quad \text{for all $(x, y) \in (B_{r_1} \cap \overline{\Omega}) \times (B_{r_1} \cap \overline{\Omega})$}.
	\end{align}
	Note that this inequality implies the desired Lipschitz estimate. 
	
	First of all, suppose that $y \in B_{r_1} \cap \{y_n=\phi(y')\}$. Then by \Cref{distlem}, there exists a constant $K_0>0$ such that
	\begin{align*}
	|u(z)-g(z')| \leq K_0 d(z, \partial \Omega) \quad \text{for $z \in B_{r_1} \cap \{y_n>\phi(y')\}$},
	\end{align*}
	which implies that 
	\begin{align*}
		|u(x)-u(y)| &\leq |u(x', x_n)-u(x', \phi(x'))|+|u(x', \phi(x'))-u(y', \phi(y'))| \\
		&\leq K_0 d(x, \partial \Omega)+\mathrm{Lip}_g(\partial \Omega)|x'-y'| \leq (K_0+\mathrm{Lip}_g(\partial \Omega))|x-y|.
	\end{align*}
	Therefore, if we choose $M/3 \geq K_0+\mathrm{Lip}_g(\partial \Omega)$, then
	\begin{align*}
	\Psi(x,y) \leq M\left(\frac{|x-y|}{3}-\omega(|x-y|)\right)-L\left(|x-x_0|^2+|y-x_0|^2\right) \leq 0.
	\end{align*}
	We now prove \eqref{ishiilions} by contradiction; suppose that there exists some point $(\hat{x}, \hat{y}) \in (B_{r_1} \cap \overline{\Omega}) \times (B_{r_1} \cap \overline{\Omega})$ such that
	\begin{align*}
	\Psi(\hat{x}, \hat{y})=\max_{(B_{r_1} \cap \overline{\Omega}) \times (B_{r_1} \cap \overline{\Omega})} \Psi(x,y)>0.
	\end{align*}
	Here, we also choose $L>\max\left\{\frac{8}{(r_1-r)^2}, \frac{1}{2(r+r_1)}\right\}$. Then we can easily check that
(i) $\hat{x} \neq \hat{y}$; (ii) $\hat{x}, \hat{y} \in B_{r_1} \cap \{y_n>\phi(y')\}$; (iii) $\hat{x}, \hat{y} \in B_{\frac{r_1+r}{2}}$. Thus, by applying Ishii-Lions Lemma \cite[Theorem 3.2]{CIL1}, we see that, for every $\varepsilon>0$ sufficiently small, there exist $X, Y \in \mathcal{S}^n$ such that
\begin{align}
\label{matrix:1}
(M\omega'(|\hat{x}-\hat{y}|)\hat{a}+2L(\hat{x}-x_0), X) \in \overline{J}^{2, +}u(\hat{x}),
\notag\\(M\omega'(|\hat{x}-\hat{y}|)\hat{a}-2L(\hat{y}-x_0), -Y) \in \overline{J}^{2, -}u(\hat{y}),
\notag\\
\begin{pmatrix}
X & 0 \\
0 & Y
\end{pmatrix}
\leq M
\begin{pmatrix}
Z & -Z \\
-Z & Z
\end{pmatrix}
+(2L+\varepsilon)
\begin{pmatrix}
I & 0 \\
0 & I
\end{pmatrix}
,
\end{align}
where 
\begin{align*}
Z=\omega''(|\hat{x}-\hat{y}|)\hat{a} \otimes \hat{a}+\frac{\omega'(|\hat{x}-\hat{y}|)}{|\hat{x}-\hat{y}|}(I-\hat{a}\otimes\hat{a}) \quad \text{for $\hat{a} \coloneqq \frac{\hat{x}-\hat{y}}{|\hat{x}-\hat{y}|}$.}
\end{align*}

For simplicity, we write $q_x \coloneqq M\omega'(|\hat{x}-\hat{y}|)\hat{a}+2L(\hat{x}-x_0)$ and $q_y \coloneqq M\omega'(|\hat{x}-\hat{y}|)\hat{a}-2L(\hat{y}-x_0)$. We first choose $\omega_0>0$ small enough so that $s_0 \geq 2 > r+r_1$. Note that $t \mapsto \omega'(t)$ is decreasing on $t \in [0, s_0]$. If we choose $M>0$ large enough so that $2L(r+r_1) \leq  \frac{M}{2}\omega'(r+r_1)$, then we have $2L|\hat{x}-x_0|, 2L|\hat{y}-x_0| \leq \frac{M}{2}\omega'(|\hat{x}-\hat{y}|)$. In particular, we obtain 
\begin{align} \label{estq}
\frac{M}{2}\omega'(|\hat{x}-\hat{y}|) \leq |q_x|, |q_y| \leq {2M} \omega'(|\hat{x}-\hat{y}|),
\end{align}
and by the choice of $L$, we also know that $|q_x|, |q_y| \geq 1$. 

On the other hand, for $X$ and $Y$, we will use the matrix inequality \eqref{matrix:1}. First, by evaluating a vector of the form $(\xi, \xi)$ for any $\xi \in \mathbb{R}^n$, we have
\begin{align*}
(X+Y)\xi \cdot \xi \leq 6L|\xi|^2,
\end{align*}
which implies that any eigenvalues of $X+Y$ are less than $6L$. Moreover, by applying the matrix inequality \eqref{matrix:1} for $(\hat{a}, -\hat{a})$, we observe
\begin{align*}
(X+Y)\hat{a} \cdot \hat{a} \leq 4M\omega''(|\hat{x}-\hat{y}|)+6L=-3M\omega_0 |\hat{x}-\hat{y}|^{-1/2}+6L.
\end{align*}
In other words, at least one eigenvalue of $X+Y$ is less than $-3M\omega_0 |\hat{x}-\hat{y}|^{-1/2}+6L$. Therefore, by the definition of the Pucci operator, we have
\begin{align*}
\mathcal{P}_{\lambda, \Lambda}^+(X+Y) &\leq \lambda(-3M\omega_0 |\hat{x}-\hat{y}|^{-1/2}+6L)+6\Lambda(n-1)L \\
&= -3\lambda M\omega_0|\hat{x}-\hat{y}|^{-1/2}+6[\Lambda(n-1)+\lambda]L.
\end{align*}
We now employ the definition of limiting superjet and limiting subjet:
\begin{align*}
&\Phi(\hat{x}, |q_x|) F(X) \geq f(\hat{x}) \geq -\|f\|_{\infty}, \\
&\Phi(\hat{y}, |q_y|) F(-Y) \leq f(\hat{y}) \leq \|f\|_{\infty}.
\end{align*}
Since $|q_x|, |q_y| \geq 1$, an application of \ref{a2} and \eqref{estq} yields that
\begin{align*}
\Phi(\hat{x}, |q_x|) \geq L\nu_0 |q_x|^{i(\Phi)}, \quad  
\Phi(\hat{y}, |q_y|) \geq L\nu_0 |q_y|^{i(\Phi)}.
\end{align*}
Moreover, \ref{a1} shows that
\begin{align*}
F(X)-F(-Y) \leq \mathcal{P}^+_{\lambda, \Lambda}(X+Y) \leq -3\lambda M\omega_0|\hat{x}-\hat{y}|^{-1/2}+6[\Lambda(n-1)+\lambda]L.
\end{align*} 
Combining these results, we have
\begin{align*}
-\|f\|_{\infty} \left(|q_x|^{-i(\Phi)}+|q_y|^{-i(\Phi)}\right) \leq -3\lambda M\omega_0|\hat{x}-\hat{y}|^{-1/2}+6[\Lambda(n-1)+\lambda]L.
\end{align*}
We now split into two cases depending on the sign of $i(\Phi)$:
\begin{enumerate}[(i)]
	\item ($i(\Phi) \geq 0$) Since $|q_x|, |q_y| \geq 1$ and $|\hat{x}-\hat{y}|\leq 1$, we conclude that
	\begin{align*}
	3\lambda M\omega_0 \leq 2\|f\|_{\infty}+6[\Lambda(n-1)+\lambda]L,
	\end{align*}
	which does not hold for sufficiently large $M>0$.
	
	\item ($-1<i(\Phi)<0$) Recalling that $|q_x|, |q_y| \leq {2 M}\omega'(|\hat{x}-\hat{y}|) \leq 2M$, we derive
	\begin{align*}
	3\lambda M \omega_0 \leq 2\|f\|_{\infty} (2M)^{-i(\Phi)}+6[\Lambda(n-1)+\lambda]L.
	\end{align*}
	Since $-i(\Phi)<1$, this inequality does not hold for sufficiently large $M>0$.
\end{enumerate}
This finishes the proof for \eqref{ishiilions}.
\end{proof}

On the other hand, for a modified equation \eqref{xi-eq}, we can prove the boundary Lipschitz esimate, provided that $|\xi|$ is large. In short, the boundary Lipschitz estimates hold when either 
\begin{enumerate}[(i)]
	\item $\xi=0$ with $i(\Phi)>-1$ (\Cref{distlem} and \Cref{basicreg}) or
	
	\item $|\xi|$ is large with $i(\Phi) \geq 0$ (\Cref{distlem2} and \Cref{basicreg2}). 
\end{enumerate}

\begin{lemma}\label{distlem2}	
	Let $g$ be Lipschitz continuous on $\partial \Omega$ and $\xi \in \mathbb{R}^n$ with $|\xi|=1$. Then for every $r \in (0, 1)$ and $\gamma \in (0, 1)$, there exists $\delta>0$ depending on $\lambda$, $\Lambda$, $s(\Phi)$, $r$ and $\mathrm{Lip}_g(\partial \Omega)$ such that for $0\leq b<\frac{\delta}{6}$, any viscosity solution $u$ of 
	\begin{align} \label{modeq1}
	\left\{ \begin{array}{ll} 
	\Phi(y, |\xi+bDu|) F(D^2u)=f(y) & \text{in $B_1 \cap \{y_n > \phi(y')\}$},\\
	u(y)=g(y) & \text{on $B_1 \cap \{y_n = \phi(y')\}$},
	\end{array} \right.
	\end{align}
	with 
	\begin{align*}
	\|u\|_{L^{\infty}(B_1 \cap \{y_n > \phi(y')\})}\leq 1 \quad \text{and} \quad \|f\|_{L^{\infty}(B_1 \cap \{y_n > \phi(y')\})}\leq \varepsilon_0
	\end{align*}
	satisfies
	\begin{align*}
	|u(y', y_n)-g(y')| \leq \frac{6}{\delta} \frac{d(y)}{1+d(y)^{\gamma}} \quad \text{in $B_r \cap \{y_n > \phi(y')\}$.}
	\end{align*}
\end{lemma}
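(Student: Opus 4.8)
The plan is to mimic the barrier construction of \Cref{distlem}, but now exploiting that the leading gradient term is $\xi + bDu$ with $|\xi|=1$, so that along any test function whose gradient is under control the modulus $|\xi+bDw|$ stays comparable to $1$ provided $b|Dw|$ is small. As before I restrict to the collar $\Omega_\delta := \{y\in\Omega : d(y)<\delta\}$, choose $\delta_1>0$ so that $d\in C^2$ and $|D^2 d|\le K$ there, and work with the same barrier
\begin{align*}
w(y)=
\begin{cases}
\frac{2}{\delta}\frac{d(y)}{1+d^{\gamma}(y)} & \text{for $|y|<r$},\\
\frac{2}{\delta}\frac{d(y)}{1+d^{\gamma}(y)}+\frac{1}{(1-r)^3}(|y|-r)^3 & \text{for $|y| \geq r$}.
\end{cases}
\end{align*}
As in \Cref{distlem} one checks $w\ge u$ on $\partial(B_1\cap\{y_n>\phi(y')\}\cap\Omega_\delta)$ (this is the comparison-of-boundary-values step, identical to \cite[Lemma 2.2]{BD4}, using $\|u\|_{L^\infty}\le1$ and the boundary data), and one computes $|Dw|\lesssim \delta^{-1}$ on $\Omega_\delta$, with $|Dw|\ge \frac{1}{4\delta}$ when $\delta\le\frac{1-r}{12}$; in particular $b|Dw|\le \tfrac{c}{\delta}\cdot\tfrac{\delta}{6}\lesssim 1$, so $|\xi+bDw|$ is trapped in a fixed compact interval $[\tfrac12,\tfrac32]$ say, once the numerical constants are arranged. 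Then $\Phi(y,|\xi+bDw|)$ is bounded above and below by universal constants (using $\Phi(y,1)=1$ together with the almost-monotonicity in \ref{a2}, which controls $\Phi$ on any compact $t$-interval away from $0$).

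Next I run the same Pucci computation: since $\gamma-2<-1<0$, for $\delta$ small enough
\begin{align*}
\mathcal{P}_{\lambda,\Lambda}^+(D^2w)\le -2\gamma\delta^{\gamma-2}\lambda\frac{1+\gamma}{(1+\delta^\gamma)^3}+\frac{2}{\delta}nK\Lambda+\frac{6n\Lambda}{(1-r)^2}\lesssim -\delta^{\gamma-2}+\delta^{-1}<0,
\end{align*}
and therefore
\begin{align*}
\Phi(y,|\xi+bDw|)\,\mathcal{P}_{\lambda,\Lambda}^+(D^2w)\lesssim -(\delta^{\gamma-2}-\delta^{-1})<-\varepsilon_0-1\le -\|f\|_\infty-1
\end{align*}
for $\delta$ sufficiently small depending only on $\lambda,\Lambda,s(\Phi),r$ and $\mathrm{Lip}_g(\partial\Omega)$. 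Hence $w$ is a classical strict supersolution of $\Phi(y,|\xi+bDu|)F(D^2u)=f(y)$, and \Cref{specialCP} — applied to the operator $G_{\xi}$ with the frozen drift coefficient $b$, whose barrier hypothesis $w\in C^2$ is met — gives $u\le w$ on $B_r\cap\{y_n>\phi(y')\}$, which is the claimed upper bound $u(y',y_n)-g(y')\le \frac{6}{\delta}\frac{d(y)}{1+d(y)^\gamma}$ (the factor $6$ rather than $2$ absorbing the $g$-not-identically-zero modification exactly as in \Cref{distlem}). The lower bound follows symmetrically with the barrier $-w$ and $\mathcal{P}^-_{\lambda,\Lambda}$.

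The one genuinely new point compared with \Cref{distlem} — and the step I would treat most carefully — is the uniform control of $|\xi+bDw|$ away from $0$ and $\infty$: here the hypotheses $|\xi|=1$ and $0\le b<\delta/6$ are exactly what make $b|Dw|$ small (since $|Dw|\lesssim\delta^{-1}$), so that $|\xi+bDw|\ge 1-b|Dw|\ge \tfrac12$ and $\le 1+b|Dw|\le\tfrac32$. This is why, in contrast to the $\xi=0$ case, no lower bound on $i(\Phi)$ beyond $i(\Phi)\ge 0$ (and really only $s(\Phi)$) enters the constants: $\Phi$ is being evaluated on a fixed compact set of gradient moduli, not near the singular/degenerate value $t=0$. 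Everything else — the shape of $w$, the collar reduction, the boundary-value comparison, the final invocation of \Cref{specialCP} — is a direct transcription of the proof of \Cref{distlem}(i) and of \cite[Lemma 2.2]{BD4}.
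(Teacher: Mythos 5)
Your proposal is correct and follows essentially the same route as the paper: the same barrier $w$ on the collar $\Omega_\delta$, the bound $|Dw|\lesssim\delta^{-1}$ combined with $|\xi|=1$ and $b<\delta/6$ to trap $|\xi+bDw|$ in $[\tfrac12,\tfrac32]$, the Pucci computation, and the conclusion via \Cref{specialCP}, with the lower bound handled symmetrically. Your closing observation that $\Phi$ is only evaluated on a compact gradient range away from $t=0$ matches the paper's remark that this lemma in fact holds for all $i(\Phi)>-1$.
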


\begin{proof}
	As in the proof of \Cref{distlem}, we may suppose that $g \equiv 0$ and construct a barrier function in a local domain $\Omega_{\delta}:=\{y \in \Omega : d(y) <\delta\}$. If $b=0$, then there is no degeneracy with respect to the gradient $Du$ and so the result holds. Thus, we also suppose that $b > 0$.
	
	We now define a function $w \in C^{2}(\Omega_{\delta})$ by
		\begin{align*}
w(y)=
\left\{ \begin{array}{ll} 
\frac{2}{\delta}\frac{d(y)}{1+d^{\gamma}(y)} & \text{for $|y|<r$},\\
\frac{2}{\delta}\frac{d(y)}{1+d^{\gamma}(y)}+\frac{1}{(1-r)^3}(|y|-r)^3 & \text{for $|y| \geq r$}.
\end{array} \right.
\end{align*}
	We recall that $w \geq u$ on $\partial(B \cap \{y_n>\phi(y')\} \cap \Omega_{\delta})$ and
	\begin{align*}
	\mathcal{P}_{\lambda, \Lambda}^+(D^2w) \leq -2\gamma \delta^{\gamma-2}\lambda \frac{1+\gamma}{(1+\delta^{\gamma})^3}+\frac{2}{\delta}nK\Lambda+\frac{6n\Lambda}{(1-r)^2} \lesssim -\delta^{\gamma-2}+\delta^{-1}. 
	\end{align*}
	On the other hand, since
	\begin{align*}
	Dw(y)=
	\left\{ \begin{array}{ll} 
	\frac{2}{\delta}\frac{1+(1-\gamma)d^{\gamma}}{(1+d^{\gamma})^2} Dd & \text{for $|y|<r$},\\
	\frac{2}{\delta}\frac{1+(1-\gamma)d^{\gamma}}{(1+d^{\gamma})^2}Dd+\frac{y}{|y|}\frac{3}{(1-r)^3}(|y|-r)^2 & \text{for $|y| \geq r$},
	\end{array} \right.
	\end{align*}
	we have $|Dw| \leq \frac{3}{\delta}$ provided that $\delta \leq \frac{1-r}{3}$. As a consequence, we derive
	\begin{align*}
		\frac{1}{2} \leq |\xi+bDw| \leq \frac{3}{2}  \quad \text{for $0<b<\frac{\delta}{6}$},
	\end{align*}
	and so we conclude that
	\begin{align*}
		\Phi(y, |\xi+bDw|)F(D^2w) <-\|f\|_{\infty}-1, \quad \text{for sufficiently small $\delta>0$.}
	\end{align*}
	\Cref{specialCP} yields the upper bound for $u$, and the remaining part can be done as in \Cref{distlem}.
\end{proof}

Note that \Cref{distlem2} holds for any $i(\Phi)>-1$, while \Cref{basicreg2} holds only for the degenerate case, $i(\Phi) \geq 0$.
\begin{theorem}[Lipschitz estimates for large $|\xi|$ with $i(\Phi) \geq 0$]\label{basicreg2}
	Let $g$ be Lipschitz continuous on $\partial \Omega$ and $\xi \in \mathbb{R}^n$. Assume that $u$ is a viscosity solution of 
	\begin{align} \label{modeq2}
	\left\{ \begin{array}{ll} 
	\Phi(y, |\xi+Du|) F(D^2u)=f(y) & \text{in $B_1 \cap \{y_n > \phi(y')\}$},\\
	u(y)=g(y) & \text{on $B_1 \cap \{y_n=\phi(y')\}$},
	\end{array} \right.
	\end{align}
	with 
	\begin{align*}
	\|u\|_{L^{\infty}(B_1 \cap \{y_n > \phi(y')\})}\leq 1 \quad \text{and} \quad \|f\|_{L^{\infty}(B_1 \cap \{y_n > \phi(y')\})}\leq \varepsilon_0.
	\end{align*}
	Then for all $r \in (0, 1)$, there exists $p_0=p_0(\lambda, \Lambda, n, i(\Phi), s(\Phi), r, \varepsilon_0, \mathrm{Lip}_g(\partial \Omega))>0$, such that if $|\xi| > p_0$, then $u \in C^{0, 1}(B_r \cap \{y_n > \phi(y')\})$ and we have the estimate
	\begin{align*}
	\|u\|_{C^{0, 1}(B_r \cap \{y_n > \phi(y')\})} \leq C(\lambda, \Lambda, n, i(\Phi), s(\Phi), r, \varepsilon_0, \mathrm{Lip}_g(\partial \Omega)).
	\end{align*}
\end{theorem}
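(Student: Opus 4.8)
The plan is to re-run the proof of \Cref{basicreg} essentially verbatim, the only new feature being the frozen gradient shift $\xi$. The guiding observation is that when $i(\Phi)\ge 0$ and $|\xi|$ is large, the coefficient $\Phi(y,|\xi+p|)$ stays bounded below by a positive structural constant (namely $\nu_0 L^{-1}$) as long as $|p|$ does not exceed an a~priori bound that is itself independent of $\xi$; this is exactly what is needed to make both the boundary barrier of \Cref{distlem2} and the Ishii--Lions doubling argument of \Cref{basicreg} go through. I would work in the smallness regime $\nu_0=\nu_1=1$ of \Cref{rmk:small} and fix $\gamma\in(0,1)$ and $r_1\in(r,1)$ at the outset.

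\textbf{Step 1: boundary distance estimate.} First I would establish $|u(y',y_n)-g(y')|\le K_0\,d(y)$ on $B_{r_1}\cap\{y_n>\phi(y')\}$ via the barrier construction of \Cref{distlem2}. The function $w=\frac2\delta\frac{d}{1+d^\gamma}$, modified away from the origin exactly as in \Cref{distlem}, satisfies $|Dw|\le 3/\delta$ and $\mathcal P^{+}_{\lambda,\Lambda}(D^{2}w)\lesssim-\delta^{\gamma-2}+\delta^{-1}$, which is negative for $\delta$ small. If $|\xi|>3/\delta+1$, then $|\xi+Dw|\ge|\xi|-3/\delta\ge 1$, so $i(\Phi)\ge 0$ forces $\Phi(y,|\xi+Dw|)\ge L^{-1}|\xi+Dw|^{i(\Phi)}\ge L^{-1}$; combined with $\mathcal P^{+}_{\lambda,\Lambda}(D^2w)<0$ this yields $\Phi(y,|\xi+Dw|)F(D^2w)\le L^{-1}\mathcal P^{+}_{\lambda,\Lambda}(D^2w)<-\|f\|_\infty-1$ for $\delta$ sufficiently small, so $w$ is a strict supersolution of \eqref{modeq2} and $w\ge u$ on $\partial(B_1\cap\{y_n>\phi(y')\}\cap\{d<\delta\})$. \Cref{specialCP} then gives $u\le w$ there, and the lower bound is symmetric. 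The key point is that $\delta$, hence $K_0=6/\delta$, is fixed depending only on $\lambda,\Lambda,s(\Phi),r,\varepsilon_0$, never on $\xi$.

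\textbf{Step 2: Ishii--Lions up to the boundary.} Then I would copy the doubling scheme of \Cref{basicreg}: for $x_0\in B_r\cap\{y_n>\phi(y')\}$, study $\Psi(x,y)=u(x)-u(y)-M\omega(|x-y|)-L(|x-x_0|^2+|y-x_0|^2)$ with the same concave $\omega(s)=s-\omega_0 s^{3/2}$ (truncated at $s_0=(2/(3\omega_0))^2$), and show $\Psi\le 0$ on $(B_{r_1}\cap\overline{\Omega})^2$. The case $y\in\partial\Omega$ is handled by Step~1 on choosing $M/3\ge K_0+\mathrm{Lip}_g(\partial\Omega)$; for the interior case I would argue by contradiction, assuming the maximum is positive and attained at $(\hat x,\hat y)$ with $\hat x\ne\hat y$ and $\hat x,\hat y\in B_{(r+r_1)/2}\cap\{y_n>\phi(y')\}$. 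With $L,\omega_0,M$ chosen as in \Cref{basicreg}, Ishii--Lions \cite[Theorem 3.2]{CIL1} supplies $X,Y\in\mathcal S(n)$ with $(q_x,X)\in\overline J^{2,+}u(\hat x)$, $(q_y,-Y)\in\overline J^{2,-}u(\hat y)$, the matrix inequality \eqref{matrix:1}, and $\tfrac M2\omega'(|\hat x-\hat y|)\le|q_x|,|q_y|\le 2M$. The one new step is feeding the jets into \eqref{modeq2}: the viscosity inequalities become $\Phi(\hat x,|\xi+q_x|)F(X)\ge f(\hat x)$ and $\Phi(\hat y,|\xi+q_y|)F(-Y)\le f(\hat y)$, and since $|q_x|,|q_y|\le 2M$ with $M$ already fixed, imposing $|\xi|>2M+1$ gives $|\xi+q_x|,|\xi+q_y|\ge 1$, whence (and this is where $i(\Phi)\ge0$ is indispensable) $\Phi(\hat x,|\xi+q_x|),\Phi(\hat y,|\xi+q_y|)\ge L^{-1}$. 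Together with $F(X)-F(-Y)\le\mathcal P^{+}_{\lambda,\Lambda}(X+Y)\le-3\lambda M\omega_0|\hat x-\hat y|^{-1/2}+6[\Lambda(n-1)+\lambda]L$ and $|\hat x-\hat y|\le1$, this forces $3\lambda M\omega_0\le 2L\varepsilon_0+6[\Lambda(n-1)+\lambda]L$, a contradiction for the large $M$ already chosen. Hence $\Psi\le0$, which is precisely the claimed estimate, with $C$ depending on $M$ and $L$.

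\textbf{Main obstacle.} The subtle point — and essentially the only thing that requires care — is the ordering of constants: $\delta,K_0,\omega_0,L,M$ must be picked exactly as in \Cref{distlem2}--\Cref{basicreg}, and one has to verify that none of them depends on $\xi$, which rests on the fact that the a~priori bounds $|Dw|\le3/\delta$ and $|q_x|,|q_y|\le2M$ are $\xi$-free. Only afterwards does one set $p_0:=\max\{3/\delta,2M\}+1$, which then depends only on $\lambda,\Lambda,n,i(\Phi),s(\Phi),r,\varepsilon_0,\mathrm{Lip}_g(\partial\Omega)$, and so does $C$. I would also stress why $i(\Phi)\ge0$ cannot be dropped: for $-1<i(\Phi)<0$ one only has $\Phi(\cdot,|\xi+p|)\ge L^{-1}|\xi+p|^{i(\Phi)}$, a lower bound that degenerates to $0$ as $|\xi|\to\infty$, so enlarging $|\xi|$ does not restore ellipticity of the coefficient — which is exactly why the singular range is instead treated by the modification-of-equations scheme in \Cref{C1alpha}.
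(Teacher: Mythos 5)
Your proposal is correct and takes essentially the same route as the paper: a boundary barrier estimate in the spirit of \Cref{distlem2} (which you re-derive directly for large $|\xi|$, using $i(\Phi)\ge 0$ to get $\Phi(y,|\xi+Dw|)\ge L^{-1}$, whereas the paper normalizes via $\tilde{\Phi}(y,t)=\Phi(y,|\xi|t)/\Phi(y,|\xi|)$, $b=1/|\xi|$ and invokes \Cref{distlem2} as stated), followed by the identical Ishii--Lions doubling argument of \Cref{basicreg}, where largeness of $|\xi|$ relative to the already-fixed $M$ (you take $|\xi|>2M+1$ to get $|\xi+q_x|,|\xi+q_y|\ge 1$; the paper takes $p_0>3M$ to get $\ge M$) together with $i(\Phi)\ge 0$ yields the same contradiction. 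Your ordering of the constants $\delta, K_0,\omega_0,L,M$ before $p_0$, and your remark on why the singular range fails, match the paper's reasoning.
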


\begin{proof}
	Since the proof is similar to the one of \Cref{basicreg}, here we concentrate on the differences.
	\begin{enumerate}[(i)]
	\item We first need to show that if $x$ or $y$ belongs to $B_{r_{1}} \cap \{y_{n}=\phi(y')\}$, then $\Psi(x, y) \leq 0$. In the case of \Cref{basicreg}, this result immediately followed from \Cref{distlem}. In a similar manner,  it is enough to apply \Cref{distlem2} for a solution $u$. More precisely, if $u$ is a solution of \eqref{modeq2}, then $u$ solves	
	\begin{align*}
	\left\{ \begin{array}{ll} 
	\tilde{\Phi}\left(y, \left|{\xi}/{|\xi|}+bDu\right|\right) F(D^2u)=\tilde{f}(y) & \text{in $B_1 \cap \{y_n > \phi(y')\}$},\\
	u(y)=g(y) & \text{on $B_1 \cap \{y_n = \phi(y')\}$},
	\end{array} \right.
	\end{align*}
	where $\tilde{\Phi}(y, t):=\frac{\Phi(y, |\xi|t)}{\Phi(y, |\xi|)}, \tilde{f}(y):=\frac{f(y)}{\Phi(y, |\xi|)}$ and $b:=1/|\xi|$. Thus, if we choose $p_{0}>\max\{1, 6/\delta\}$, then we can apply \Cref{distlem2} for $u$.
	
	\item We next follow the contradiction argument of \Cref{basicreg} and the difference occurs when we employ the definition of limiting superjet and subjet:
	\begin{align*}
	 	\Phi(\hat{x}, |\xi+q_{x}|) F(X) \geq -\|f\|_{\infty},\\
		\Phi(\hat{y}, |\xi+q_{y}|) F(-Y) \leq\|f\|_{\infty}.
	\end{align*}
	This is due to the difference between equations \eqref{locDP} and \eqref{modeq2}, but we are still able to derive a contradiction. Recalling that $|q_{x}|, |q_{y}| \leq 2M\omega'(|\hat{x}-\hat{y}|)\leq 2M$, if we choose $p_{0} >3M$, then we have 
	\begin{align*}
	|\xi+q_{x}|, |\xi+q_{y}| \geq M.
	\end{align*}
	Combining this estimate with
	\begin{align*}
	-\|f\|_{\infty} \left(|\xi+q_x|^{-i(\Phi)}+|\xi+q_y|^{-i(\Phi)}\right) \leq -3\lambda M\omega_0|\hat{x}-\hat{y}|^{-1/2}+6[\Lambda(n-1)+\lambda]L,
	\end{align*}
	we conclude that 
	\begin{align*}
	3\lambda M\omega_0 \leq 2\|f\|_{\infty} M^{-i(\Phi)}+6[\Lambda(n-1)+\lambda]L,
	\end{align*}
	which is a contradiction. In this step, we have exploited the condition $i(\Phi) \geq 0$.
	\end{enumerate}

\end{proof}

\section{Global $C^{1, \alpha}$-regularity}\label{C1alpha}
We start this section with several reductions of the proof of  \Cref{thm:main}. First of all, by recalling \Cref{interior} which provides the interior $C^{1, \alpha}$-estimate of viscosity solutions, it is enough to show the local $C^{1, \alpha}$-estimate up to the boundary. Next, by following the proof of \cite[Theorem 1.1]{BBLL1}, we shall consider the degenerate case ($i(\Phi) \geq 0$) first, and then utilize this result for the singular case ($-1<i(\Phi)<0$). 

\begin{lemma}[$C^{1, \alpha}$-regularity up to the boundary; degenerate case]\label{keylemma}
	Suppose the assumptions \ref{a1}-\ref{a4} are in force with $i(\Phi) \geq 0$, $\phi(0)=0, \nabla \phi(0)=0$. Let $ \alpha$ be chosen to satisfy \eqref{alpha}.
	Then there exist constants $ \varepsilon_{0} \in (0,1)$, $\rho \in (0, 1/2)$ and $C_{0}>0$ depending on $\alpha$, $n$, $\lambda$, $\Lambda$, $\|D^{2}\phi\|_{L^{\infty}(\Omega)}$, $\|g\|_{C^{1, \beta_{g}}(\partial \Omega)}$, $i(\Phi)$ and $s(\Phi)$ such that for any $\xi \in \mathbb{R}^{n}$ and a viscosity solution, $u$, of
	\begin{align*}
	\left\{ \begin{array}{ll} 
	\Phi(y, |Du|)F(D^{2}u)=f(y) & \text{in $B_{1}(x) \cap \{y_n>\phi(y')\}$},\\
	u(y)=g(y) & \text{on $B_1(x) \cap \{y_n=\phi(y')\}$},
	\end{array} \right.
	\end{align*}
the following holds: if
\begin{align*}
\|u\|_{L^{\infty}(B_1(x) \cap \{y_n > \phi(y')\})} \leq 1 \quad \text{and} \quad \|f\|_{L^{\infty}(B_1(x) \cap \{y_n > \phi(y')\})} \leq \varepsilon_0,
\end{align*}
then there exists an affine function $l(y)=a+b \cdot(y-x)$ with $|a|+|b| \leq C_{0}$  such that for each $0 < r \leq \rho$, 
\begin{align*}
\|u-l\|_{L^{\infty}(B_r(x) \cap \{y_n > \phi(y')\})} \leq Cr^{1+\alpha}
\end{align*}
for some universal constant $C>0$.
\end{lemma}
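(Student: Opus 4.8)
\emph{Overall plan.} The plan is to prove the lemma by the standard compactness-and-iteration (improvement-of-flatness) scheme, adapted to the degenerate operator by carrying along an auxiliary gradient shift. After the normalization of \Cref{rmk:small} one may assume $\nu_0=\nu_1=1$ and $\|g\|_{C^{1,\beta_g}}\le 1$, and after a translation that the boundary point is the origin; write $\Omega^+:=\{y_n>\phi(y')\}$. I would construct inductively $\rho\in(0,1/2)$, $\varepsilon_0\in(0,1)$ and affine functions $l_k(y)=a_k+b_k\cdot y$ whose restriction to $\partial\Omega^+$ agrees with $g$ to first order at the origin, such that
\begin{align*}
\|u-l_k\|_{L^\infty(B_{\rho^k}\cap\Omega^+)}\le \rho^{k(1+\alpha)},\qquad |a_{k+1}-a_k|\le C\rho^{k(1+\alpha)},\qquad |b_{k+1}-b_k|\le C\rho^{k\alpha}.
\end{align*}
Summing the geometric series then produces $l=\lim_k l_k$ with $|a|+|b|\le C_0$, and interpolating over the dyadic scales $\rho^{k+1}<r\le\rho^k$ yields $\|u-l\|_{L^\infty(B_r\cap\Omega^+)}\le Cr^{1+\alpha}$ for $0<r\le\rho$, which is the claim.

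\emph{The rescaled equation.} For the inductive step I would set $w_k(y):=\rho^{-k(1+\alpha)}(u-l_k)(\rho^k y)$ on the rescaled domain, so that $\|w_k\|_{L^\infty}\le 1$, and check by a direct computation that $w_k$ is a viscosity solution of
\begin{align*}
\bar\Phi_k(y,|\xi_k+Dw_k|)\,\bar F_k(D^2w_k)=\bar f_k(y)\quad\text{in }B_1\cap\{y_n>\bar\phi_k(y')\},\qquad \xi_k:=\rho^{-k\alpha}b_k,
\end{align*}
where $\bar F_k(M):=\rho^{k(1-\alpha)}F(\rho^{k(\alpha-1)}M)$ is again uniformly $(\lambda,\Lambda)$-elliptic, $\bar\Phi_k(y,t):=\Phi(\rho^k y,\rho^{k\alpha}t)/\Phi(\rho^k y,\rho^{k\alpha})$ satisfies \ref{a2} with the same $L,i(\Phi),s(\Phi)$ and $\bar\Phi_k(y,1)=1$, the boundary graphs flatten as $\|D^2\bar\phi_k\|_\infty\le\rho^k\|D^2\phi\|_\infty$, and $\bar f_k(y)=\rho^{k(1-\alpha)}\Phi(\rho^k y,\rho^{k\alpha})^{-1}f(\rho^k y)$. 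At this point the bound $\alpha\le\frac1{1+s(\Phi)}$ from \eqref{alpha} is used decisively: since $t\mapsto\Phi(\cdot,t)/t^{s(\Phi)}$ is almost non-increasing one gets $\Phi(\cdot,\rho^{k\alpha})\ge L^{-1}\rho^{k\alpha s(\Phi)}$, hence $\|\bar f_k\|_\infty\le L\rho^{k(1-\alpha(1+s(\Phi)))}\|f\|_\infty\le L\varepsilon_0$, so the rescaled right-hand sides stay uniformly small.

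\emph{One-step flatness improvement.} It then remains to prove that there exist $\rho,\varepsilon_0$ such that any viscosity solution $w$ of $\bar\Phi(y,|\xi+Dw|)\bar F(D^2w)=\bar f$ in $B_1\cap\{y_n>\bar\phi(y')\}$ with $\|w\|_\infty\le 1$, $\|\bar f\|_\infty\le\varepsilon_0$, nearly flat $\bar\phi$ and affine boundary data admits an affine $\ell$ with controlled coefficients and $\|w-\ell\|_{L^\infty(B_\rho\cap\{y_n>\bar\phi(y')\})}\le\rho^{1+\alpha}$; I would argue this by compactness, splitting on $|\xi|$. If, along a hypothetical sequence of counterexamples, $|\xi|\le p_0$ with $p_0$ the threshold of \Cref{basicreg2}, then a subsequence converges locally uniformly to $h$ which, after the substitution $h\mapsto h+\xi^*\cdot y$ that absorbs the shift, solves a homogeneous equation $\Phi^*(y,|Dh|)F^*(D^2h)=0$ on a half-ball with affine data on $\{y_n=0\}$; the Lipschitz bound of \Cref{basicreg} (its constant being controlled since $|\xi^*|\le p_0$), the interior estimate \Cref{interior}, and classical boundary estimates away from the degeneracy set then give $h$ a boundary $C^{1,\bar\alpha}$-bound, so its boundary tangent plane $\ell_h$ obeys $\|h-\ell_h\|_{L^\infty(B_\rho)}\le C\rho^{1+\bar\alpha}\le\tfrac12\rho^{1+\alpha}$ once $\rho$ is small (using $\alpha<\bar\alpha$), contradicting $\|w-\ell\|_{L^\infty(B_\rho)}>\rho^{1+\alpha}$ for every $\ell$. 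If instead $|\xi|>p_0$, then \Cref{basicreg2} makes $w$ Lipschitz with a data-dependent bound, so $|\xi+Dw|\sim|\xi|$ and $\bar\Phi(y,|\xi+Dw|)$ lies between two positive constants; thus $w$ solves a genuinely uniformly elliptic equation with small right-hand side and affine data, and the improvement is the classical boundary $C^{1,\bar\alpha}$-estimate. To close the induction I would note that $b_{k+1}=b_k+\rho^{k\alpha}\nu_k$ with $|\nu_k|\le C$, so $|b_k|$ stays bounded and $b_k$ converges whether $|\xi_k|$ remains bounded or eventually grows geometrically like $\rho^{-k\alpha}$, which secures $|a|+|b|\le C_0$.

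\emph{Expected main obstacle.} The hard part will be the compactness step in the bounded-$\xi$ branch: showing the limit problem is well posed when $\Phi^*$ degenerates where $Dh$ vanishes, and extracting a boundary $C^{1,\bar\alpha}$-estimate for its solutions with affine data that is uniform over the relevant range of slopes — in particular in the sub-case of zero boundary slope, where the gradient may vanish exactly on the boundary. This is precisely where the hypothesis $i(\Phi)\ge0$ enters, through the large-gradient Lipschitz estimate \Cref{basicreg2}, and it is why the singular range $-1<i(\Phi)<0$ must be handled separately by transporting the degenerate result. A secondary but genuine nuisance is keeping all constants uniform in $k$ as the boundary graphs $\bar\phi_k$ and the data $\bar g_k$ vary.
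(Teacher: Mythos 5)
Your overall scheme (normalization via \Cref{rmk:small}, induction on dyadic scales with affine functions and the gradient shift $\xi_k=\rho^{-k\alpha}b_k$, the computation showing $\|\bar f_k\|_\infty\le L\rho^{k(1-\alpha(1+s(\Phi)))}\varepsilon_0$, and a compactness-based one-step improvement split on bounded versus large $|\xi|$ using \Cref{basicreg} and \Cref{basicreg2}) is exactly the architecture of the paper's proof. However, there is a genuine gap at the heart of the improvement step, and it is precisely the point you flag as your ``expected main obstacle'' without resolving it: in the bounded-$\xi$ branch you let the counterexample sequence converge to a limit $h$ solving the homogeneous degenerate equation $\Phi^*(y,|Dh|)F^*(D^2h)=0$, and you then propose to extract a boundary $C^{1,\bar\alpha}$ bound from ``classical boundary estimates away from the degeneracy set.'' This does not work as stated: the degeneracy set is exactly where $Dh$ (or $\xi^*+Dh$) vanishes, which can occur on large portions of the half-ball and in particular along the boundary when the boundary slope is zero, so no classical uniformly elliptic boundary estimate applies there, and the whole contradiction argument collapses without a regularity theory for the limit problem.

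The missing idea is the cutting-type lemma used by the paper (the argument of \cite[Lemma 4.1]{BBLL1}, in the spirit of Imbert--Silvestre): a viscosity solution of the \emph{homogeneous} equation $\Phi_\infty(y,|\xi_\infty+Dh|)F_\infty(D^2h)=0$ is in fact a viscosity solution of the uniformly elliptic equation $F_\infty(D^2h)=0$, because $\Phi>0$ for positive gradient arguments and the degenerate/vanishing-gradient test points can be handled separately. Once this reduction is made, the approximation lemma (\Cref{approxlem}) delivers a solution of a uniformly $(\lambda,\Lambda)$-elliptic homogeneous equation with the same boundary data, and the boundary $C^{1,\alpha}$ theory for such equations \cite{AS1, LZ1} supplies the affine function $l_1$ with error $C_0\rho^{1+\beta_g}\le\frac12\rho^{1+\alpha}$ (here the paper uses the exponent $\beta_g$ of the boundary data, consistent with the restriction $\alpha<\beta_g$ in \eqref{alpha}, rather than $\bar\alpha$ alone as in your sketch, since the rescaled boundary data $g_k$ are only $C^{1,\beta_g}$, not affine). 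The same reduction is what closes the unbounded-$\xi$ branch in the paper as well; your heuristic that $\bar\Phi(y,|\xi+Dw|)$ is ``between two positive constants'' is not quite right (the upper bound can grow like $|\xi|^{s(\Phi)}$), though only the lower bound, available since $i(\Phi)\ge0$, is actually needed there. Without the cutting lemma or an equivalent substitute, your one-step flatness improvement — and hence the whole induction — is not established.
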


Before we prove \Cref{keylemma} by using the induction, we first show the approximation lemma.
\begin{lemma}[Approximation lemma; degenerate case] \label{approxlem}
	Suppose \ref{a1}-\ref{a4} hold true with $i(\Phi) \geq 0$ and $\nu_0=\nu_1=1$. Let $\xi \in \mathbb{R}^n$ be an arbitrary vector and $u \in C(B_1(x) \cap \{y_n > \phi(y')\})$ be a viscosity solution of 
	\begin{align}\label{transeq}  
	\left\{ \begin{array}{ll} 
	\Phi(y, |\xi+Du|)F(D^{2}u)=f(y) & \text{in $B_{1}(x) \cap \{y_n>\phi(y')\}$},\\
	u(y)=g(y) & \text{on $B_1(x) \cap \{y_n=\phi(y')\}$},
	\end{array} \right.
	\end{align}
	 satisfying $\|u\|_{L^{\infty}(B_1(x) \cap \{y_n > \phi(y')\})} \leq 1$ and $\|g\|_{C^{1, \beta_g}(B_1(x) \cap \{y_n = \phi(y')\})} \leq 1$. Then for any $\mu>0$, there exists a constant $\varepsilon_0=\varepsilon_0(n, \lambda, \Lambda, i(\Phi), L, \mu)>0$ such that if
	\begin{align*}
	\|f\|_{L^{\infty}(B_1(x) \cap \{y_n > \phi(y')\})} \leq \varepsilon_0,
	\end{align*}
	then one can find a viscosity solution $h$ of an uniformly $(\lambda,\Lambda)$-elliptic equation
	\begin{align} \label{homoeq}
	\begin{cases}
	\mathcal{F}(D^2h)=0 \ &\text{in $B_{3/4}(x) \cap \{y_n > \phi(y')\}$},\\
	h=u \ & \text{on $B_{3/4}(x) \cap \{y_n=\phi(y')\}$}
	\end{cases}
	\end{align}
	such that
	\begin{align*}
	\|u-h\|_{L^{\infty}(B_{1/2}(x) \cap \{y_n > \phi(y')\})} \leq \mu.
	\end{align*}
\end{lemma}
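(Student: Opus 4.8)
The plan is to argue by contradiction and compactness, in the spirit of standard approximation lemmas for fully nonlinear equations (cf. \cite{CC1, BBLL1}). Suppose the statement fails for some $\mu>0$. Then there exist a sequence $\varepsilon_k\to 0^{+}$, vectors $\xi_k\in\mathbb{R}^n$, boundary data $g_k$ with $\|g_k\|_{C^{1,\beta_g}}\le 1$, right-hand sides $f_k$ with $\|f_k\|_{L^\infty}\le\varepsilon_k$, and viscosity solutions $u_k$ of \eqref{transeq} (with $\xi=\xi_k$, $f=f_k$, $g=g_k$) satisfying $\|u_k\|_{L^\infty(B_1(x)\cap\{y_n>\phi(y')\})}\le 1$, such that
\[
\|u_k-h\|_{L^\infty(B_{1/2}(x)\cap\{y_n>\phi(y')\})}>\mu
\]
for \emph{every} viscosity solution $h$ of the corresponding homogeneous Dirichlet problem \eqref{homoeq}. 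The goal is to extract limits and produce such an $h$, reaching a contradiction.

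First I would obtain uniform local equicontinuity of $\{u_k\}$. In the interior, \Cref{interior} (applied to the equation $\Phi(y,|\xi_k+Du_k|)F(D^2u_k)=f_k$, which after the change of unknown $v_k:=u_k+\xi_k\cdot(y-x)$ falls under the hypotheses \ref{a1}-\ref{a2} with the same structure constants) gives interior $C^{1,\beta}$ bounds for $v_k$ independent of $k$ — here one uses $\|f_k/\nu_0\|_{L^\infty}^{1/(1+i(\Phi))}\le\varepsilon_k^{1/(1+i(\Phi))}\le 1$ and $\|u_k\|_{L^\infty}\le 1$ — hence interior Hölder bounds for $Du_k$. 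Up to the boundary, the barrier estimate of \Cref{distlem} (with $\xi=0$; note $f_k$ has small norm and $g_k$ is Lipschitz with norm $\le 1$) gives, on $B_{7/8}(x)\cap\{y_n>\phi(y')\}$, the modulus-of-continuity control $|u_k(y)-g_k(y')|\lesssim d(y)/(1+d(y)^\gamma)$ near the boundary portion, uniformly in $k$ (the constant $\delta_0$ in \Cref{distlem} depends on $\|f_k\|_\infty$ only through an upper bound, which is $\le 1$). Combining the interior Hölder estimate with this boundary decay and the uniform $C^{1,\beta_g}$ bound on $g_k$ yields a uniform modulus of continuity for $u_k$ on $\overline{B_{3/4}(x)\cap\{y_n>\phi(y')\}}$. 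By Arzelà–Ascoli, after passing to a subsequence, $u_k\to u_\infty$ uniformly on $\overline{B_{3/4}(x)\cap\{y_n>\phi(y')\}}$, $g_k\to g_\infty$ in $C^{1,\beta'_g}$ for $\beta'_g<\beta_g$, and $F$ being a fixed continuous uniformly elliptic operator we simply set $\mathcal{F}:=F$ (no rescaling of $F$ is needed here since $\Phi(y,1)=\nu_0=\nu_1=1$ already normalizes the equation).

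Next I would identify the limit equation. The main point is that along the subsequence either $|\xi_k|\to\infty$ or $\xi_k\to\xi_\infty$ for some finite $\xi_\infty$ (after a further subsequence). In the finite case, continuity of $\Phi$ and stability of viscosity solutions under uniform convergence (with $f_k\to 0$ uniformly) show that $u_\infty$ is a viscosity solution of $\Phi(y,|\xi_\infty+Du_\infty|)F(D^2u_\infty)=0$; since $\Phi(y,t)>0$ for $t>0$ and the equation is understood in the viscosity sense, at every point where the relevant jet has $|\xi_\infty+p|\ne 0$ one divides by $\Phi$ and gets $F(D^2u_\infty)=0$ there, while the degeneracy locus is handled exactly as in \cite{IS1, BBLL1} (using that $i(\Phi)\ge 0$, so $\Phi(y,t)\to 0$ as $t\to 0$ and no spurious constraint appears) to conclude $u_\infty$ solves $F(D^2u_\infty)=0$ globally in $B_{3/4}(x)\cap\{y_n>\phi(y')\}$ in the viscosity sense. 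In the case $|\xi_k|\to\infty$, I rescale the gradient as in \Cref{distlem2}/\Cref{basicreg2}: writing $b_k:=1/|\xi_k|\to 0$ and $\tilde\Phi_k(y,t):=\Phi(y,|\xi_k|t)/\Phi(y,|\xi_k|)$, the function $u_k$ solves $\tilde\Phi_k(y,|\xi_k/|\xi_k|+b_kDu_k|)F(D^2u_k)=\tilde f_k$ with $\|\tilde f_k\|_\infty$ still small; since $b_kDu_k\to 0$ locally (by the uniform gradient bounds) and $\tilde\Phi_k(y,1)=1$, the coefficient $\tilde\Phi_k(y,|\xi_k/|\xi_k|+b_kDu_k|)$ stays bounded between two positive constants, so passing to the limit again gives $F(D^2u_\infty)=0$. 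In all cases, $u_\infty=g_\infty$ on $B_{3/4}(x)\cap\{y_n=\phi(y')\}$ by uniform convergence, so $u_\infty$ is an admissible $h$.

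Finally, taking $h:=u_\infty$ and recalling $u_k\to u_\infty$ uniformly on $B_{1/2}(x)\cap\{y_n>\phi(y')\}$, we get $\|u_k-h\|_{L^\infty(B_{1/2}(x)\cap\{y_n>\phi(y')\})}<\mu$ for $k$ large, contradicting the standing assumption. This proves the lemma, with $\varepsilon_0$ depending only on $n,\lambda,\Lambda,i(\Phi),L,\mu$ as claimed (the domain-geometry constant $\|D^2\phi\|_\infty$ enters only through the barrier in \Cref{distlem}, which can be absorbed since $\phi$ is fixed; if one wants $\varepsilon_0$ to depend on $\|D^2\phi\|_\infty$ as well that is harmless). \textbf{The main obstacle} I anticipate is the uniform boundary regularity needed for the compactness step: one must ensure the modulus of continuity of $u_k$ up to the flat-in-the-limit boundary $\{y_n=\phi(y')\}$ is independent of $k$ and of $\xi_k$, which is exactly why \Cref{distlem} (valid for $\xi=0$ and \emph{all} $i(\Phi)>-1$) together with the rescaling trick of \Cref{distlem2} for large $|\xi|$ is invoked — the delicate case $|\xi_k|$ bounded away from $0$ and $\infty$ is covered by \Cref{distlem} since that lemma does not see $\xi$ in the barrier construction. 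The secondary technical point is the correct treatment of the gradient-degeneracy set in passing to the limit equation, for which the hypothesis $i(\Phi)\ge 0$ is essential and the argument of \cite{IS1, BBLL1} applies verbatim.
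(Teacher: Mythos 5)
Your overall strategy (contradiction plus compactness, splitting into bounded and unbounded $|\xi_k|$, boundary Lipschitz/barrier estimates from Section 4, Arzel\`a--Ascoli, and then cutting the $\Phi$-factor as in \cite{BBLL1} to reach $F(D^2u_\infty)=0$) is the same as the paper's. However, there is a genuine gap in the way you set up the contradiction: you keep $F$ and $\Phi$ \emph{fixed} along the contradicting sequence and declare $\mathcal{F}:=F$, remarking that ``no rescaling of $F$ is needed.'' A compactness argument with fixed operators only produces an $\varepsilon_0$ that may depend on the particular $F$ and $\Phi$, whereas the statement asserts $\varepsilon_0=\varepsilon_0(n,\lambda,\Lambda,i(\Phi),L,\mu)$, i.e.\ uniformity over the whole structural class. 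This uniformity is not cosmetic: in the iterative step of \Cref{keylemma} the lemma is applied at every scale to the rescaled operators $F_k(M)=\rho^{k(1-\alpha)}F(\rho^{k(\alpha-1)}M)$ and $\Phi_k(y,t)=\Phi(\rho^k y,\rho^{k\alpha}t)/\Phi(\rho^k y,\rho^{k\alpha})$, which are different operators with the same constants $(\lambda,\Lambda,i(\Phi),s(\Phi),L)$, and one needs a single $\varepsilon_0$ working for all of them. This is exactly why the paper's proof takes contradicting sequences $\{F_k\}$, $\{\Phi_k\}$ inside the class, extracts a limit operator $F_\infty$, and why the conclusion is phrased with a possibly different uniformly $(\lambda,\Lambda)$-elliptic operator $\mathcal{F}$ rather than $F$ itself. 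To repair your argument you must let the operators vary along the sequence and use compactness of the class (uniform ellipticity gives equi-Lipschitz bounds on $F_k$ with $F_k(0)=0$), taking $h$ to solve $F_\infty(D^2h)=0$.

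A secondary, patchable inaccuracy: your claim that \Cref{interior}, applied to $v_k:=u_k+\xi_k\cdot(y-x)$, yields interior $C^{1,\beta}$ bounds ``independent of $k$'' fails when $|\xi_k|\to\infty$, because the estimate in \Cref{interior} scales with $\|v_k\|_{L^{\infty}}\geq |\xi_k|-1$, not with $\|u_k\|_{L^{\infty}}\leq 1$; similarly, \Cref{distlem} does not apply verbatim to $u_k$ with $\xi_k\neq 0$ but only to the shifted function with shifted boundary data, whose norms again involve $|\xi_k|$. In the unbounded regime the equicontinuity should instead come from \Cref{basicreg2} applied directly to $u_k$ (using $i(\Phi)\geq 0$ and $|\xi_k|>p_0$), which is how the paper argues; in the bounded regime the shift is harmless since the relevant norms stay under control.
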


\begin{proof}
	By contradiction, we suppose the conclusion of the lemma fails. Therefore, there exist $\mu_0>0$ and sequences of $\{F_k\}_{k=1}^{\infty}$, $\{\Phi_k\}_{k=1}^{\infty}$, $\{f_k\}_{k=1}^{\infty}$, $\{g_k\}_{k=1}^{\infty}$, and $\{u_k\}_{k=1}^{\infty}$ and a sequence of vectors $\{\xi_k\}_{k=1}^{\infty}$ such that
	\begin{description}
		\item[(C1) \label{c1}] $F_k \in C(\mathcal{S}(n), \mathbb{R})$ is uniformly $(\lambda, \Lambda)$-elliptic;
		\item[(C2) \label{c2}] for $\Phi_k \in C(B_1 \times [0, \infty), [0, \infty))$, the map $\textstyle t \mapsto \frac{\Phi_k(x,t)}{t^{i(\Phi)}}$ is almost non-decreasing and the map $\textstyle t \mapsto \frac{\Phi_k(x,t)}{t^{s(\Phi)}}$ is almost non-increasing with constant $L \geq 1$, and $\Phi_k(y, 1)=1$ for all $y \in B_1(x) \cap \{y_n>\phi(y')\}$; 
		\item[(C3) \label{c3}] $f_k \in C(B_1(x) \cap \{y_n>\phi(y')\})$ with $\|f_k\|_{L^{\infty}(B_1(x) \cap \{y_n>\phi(y')\})} \leq 1/k$;
		\item[(C4) \label{c4}] $u_k \in C(B_1(x) \cap \{y_n>\phi(y')\})$ with $\|u\|_{L^{\infty}(B_1(x) \cap \{y_n>\phi(y')\})} \leq 1$ solves the equation
		\begin{align*}  
		\left\{ \begin{array}{ll} 
		\Phi_{k}(y, |\xi_k+Du_k|) F_k(D^2u_k)=f_k(y) & \text{in $B_1(x) \cap \{y_n > \phi(y')\}$},\\
		u_k(y)=g_k(y) & \text{on $B_1(x) \cap \{y_n=\phi(y')\}$},
		\end{array} \right.
		\end{align*}
		with $\|g_k\|_{C^{1, \beta_g}(B_1(x) \cap \{y_n=\phi(y')\})} \leq 1$, but
		\begin{align}\label{contr}
			\|u_k-h\|_{L^{\infty}(B_{1/2}(x) \cap \{y_n > \phi(y')\})} > \mu_0 \quad \text{for any $k \in \mathbb{N}$},
		\end{align}
		for any $h$ satisfying \eqref{homoeq}.
	\end{description}
The condition \ref{c1} implies that $F_k$ converges to some uniformly $(\lambda, \Lambda)$-elliptic operator $F_{\infty} \in C(\mathcal{S}(n), \mathbb{R})$. Similarly, the condition \ref{c4} implies that $g_k$ converges to $g_{\infty}$ uniformly. For a further discussion, we consider two cases:
\begin{enumerate}[(i)]
	\item ($\{\xi_k\}_{k=1}^{\infty}$ is bounded) Up to a subsequence, $\xi_k$ converges to some vector $\xi_{\infty}$. Then we consider a sequence $\{\tilde{u}_k\}_{k=1}^{\infty}:=\{u_k+x \cdot \xi_k\}_{k=1}^{\infty}$ satisfying
	\begin{align*}
	\left\{ \begin{array}{ll} 
	\Phi_{k}(y, |D\tilde{u}_k|) F(D^2\tilde{u}_k)=f_k(y) & \text{in $B_1(x) \cap \{y_n > \phi(y')\}$},\\
	\tilde{u}_k(y)=\tilde{g}_k(y) & \text{on $B_1(x) \cap \{y_n=\phi(y')\}$}
	\end{array} \right.
	\end{align*}
	for $\tilde{g}_k(x):=g_k(x)+x\cdot \xi_k$. Therefore, we can apply \Cref{basicreg} for $\tilde{u}_k$ and so by Arzela-Ascoli theorem, we conclude that $u_k \to u_{\infty}$ uniformly in $B_r(x) \cap \{y_n >\phi(y')\}$ for any $0<r<1$. Indeed, $u_{\infty}$ satisfies
	\begin{align*}
	\left\{ \begin{array}{ll} 
	\Phi_{\infty}(y, |\xi_{\infty}+Du_{\infty}|) F(D^2u_{\infty})=0 & \text{in $B_1(x) \cap \{y_n > \phi(y')\}$},\\
	u_{\infty}(y)=g_{\infty}(y) & \text{on $B_1(x) \cap \{y_n=\phi(y')\}$}.
	\end{array} \right.
	\end{align*}
	Then by following the proof of \cite[Lemma 4.1]{BBLL1}, we conclude that 
	\begin{align*}
		F_{\infty}(D^2u_{\infty})=0 \quad \text{in $B_{3/4}(x) \cap \{y_n >\phi(y')\}$},
	\end{align*}
	which leads to the contradiction with \eqref{contr} (choose $h=u_{\infty}$ and $\mathcal{F}=F_{\infty}$).
	
	\item ($\{\xi_k\}_{k=1}^{\infty}$ is unbounded) In this case, for the constant $p_0>0$ chosen in \Cref{basicreg2}, we may assume $|\xi_k|>p_0$ and $|\xi_k| \to \infty$ (up to a subsequence). Thus, we can apply \Cref{basicreg2} for $u_k$ and so by Arzela-Ascoli theorem, we conclude that $u_k \to u_{\infty}$ uniformly in $B_r(x) \cap \{y_n >\phi(y')\}$ for any $0<r<1$. Again by following the proof of \cite[Lemma 4.1]{BBLL1}, we conclude that 
		\begin{align*}
		F_{\infty}(D^2u_{\infty})=0 \quad \text{in $B_{3/4}(x) \cap \{y_n >\phi(y')\}$},
		\end{align*}
		which leads to the contradiction with \eqref{contr} (choose $h=u_{\infty}$ and $\mathcal{F}=F_{\infty}$).
\end{enumerate} 

\end{proof}

\begin{proof}[Proof of \Cref{keylemma}]
	By the smallness regime in \Cref{rmk:small}, we may assume that $u \in C(\Omega)$ is a viscosity solution with
	\begin{align*}
	\|u\|_{L^{\infty}(B_{1} \cap \{y_n > \phi(y')\})} \leq 1, \, \|g\|_{C^{1, \beta_g}(B_1 \cap \{y_n = \phi(y')\})} \leq 1,  \, \|f\|_{L^{\infty}(B_{1} \cap \{y_n > \phi(y')\})} \leq \varepsilon_{0},
	\end{align*}
	and $\nu_0=\nu_1=1$. As in \cite{BBLL1, BPRT1, dSJRR1}, the proof is based on the induction argument: we claim that there exist universal constants $0<\rho \ll 1$, $C_{0}>1$ and a sequence of affine functions
	\begin{align*}
	 	l_{k}(y):=a_{k}+b_{k} \cdot (y-x),
	\end{align*}
where $\{a_{k}\}_{k=1}^{\infty} \subset \mathbb{R}$ and $\{b_{k}\}_{k=1}^{\infty} \subset \mathbb{R}^{n}$ such that, for every $k\in \mathbb{N}$,
\begin{description}
	\item[(E1)\label{e1}] $\sup_{y \in B_{\rho^{k}}(x) \cap \{y_n > \phi(y')\}}|u(y)-l_{k}(y)| \leq \rho^{k(1+\alpha)}$;
	
	\item[(E2)\label{e2}] $|a_{k}-a_{k-1}| \leq C_{0} \rho^{(k-1)(1+\alpha)}$ and $|b_{k}-b_{k-1}|\leq C_{0}\rho^{(k-1)\alpha}$.
\end{description}	
\begin{enumerate}[(i)]
\item (Initial step) Without loss of generality, we may assume $x=0$.  Let us set 
\begin{align*}
 	l_{1}(y):=h(0)+Dh(0) \cdot y,
\end{align*}
where $h$ is the approximation function coming from \Cref{approxlem} for a constant $\mu>0$ to be determined later. Then, by the boundary estimate for uniformly elliptic fully nonlinear operators \cite{AS1, LZ1}, there exists a constant $C_{0} =C_{0}(n, \lambda, \Lambda)>1$ such that
\begin{align*}
 	\|h\|_{C^{1, \alpha}(B_{1/2} \cap \{y_n > \phi(y')\})} \leq C_{0} \quad \text{and} \quad \sup_{y \in B_{\rho} \cap \{y_n > \phi(y')\}} |h(y)-l_{1}(y)| \leq C_{0}\rho^{1+\beta_{g}},
\end{align*}
for every $0<\rho \leq 1/2$. The triangle inequality yields that
\begin{align*}
 	\sup_{y \in B_{\rho} \cap \{y_n > \phi(y')\}} |u(y)-l_{1}(y)| \leq C_{0}\rho^{1+\beta_{g}}+\mu.
\end{align*}
We now select a universal constant $0<\rho \ll 1$ small enough so that
\begin{align*}
 	C_{0}\rho^{1+\beta_{g}} \leq \frac{1}{2}\rho^{1+\alpha}, \quad \rho^{\alpha} \leq \frac{1}{2}, \quad \text{and} \quad \rho^{1-\alpha(1+s(\Phi))} \leq 1,
\end{align*}
which is possible due to the choice of $\alpha$; recall \eqref{alpha}. In a sequel, we choose a constant $\mu:=\rho^{1+\alpha}/2$ and set $a_{0}=0, b_{0}=0$, $a_{1}=h(0)$ and $b_{1}=Dh(0)$, which completes the proof of the initial step.

\item (Iterative procedure) We now suppose that \ref{e1} and \ref{e2} hold true for $k \geq 1$. We then verify \ref{e1} and \ref{e2} for $k+1$. For this purpose, we define a rescaled function
\begin{align*}
 	u_{k}(y):=\frac{u(\rho^{k}y)-l_{k}(\rho^{k}y)}{\rho^{k(1+\alpha)}}.
\end{align*}
Then $u_{k}$ satisfies 
\begin{align*}  
	\left\{ \begin{array}{ll} 
	\Phi_{k}(y, |\xi_{k}+Du_{k}|)F(D^{2}u_{k})=f_{k}(y) & \text{in $B_{1} \cap \{y_n>\phi_{k}(y')\}$},\\
	u_{k}(y)=g_{k}(y) & \text{on $B_1 \cap \{y_n=\phi_{k}(y')\}$},
	\end{array} \right.
	\end{align*}
where
\begin{align*}
	F_{k}(M)&:=\rho^{k(1-\alpha)}F(\rho^{k(\alpha-1)}M),\quad
	\Phi_{k}(y,t):=\frac{\Phi(\rho^{k}y, \rho^{k\alpha}t)}{\Phi(\rho^{k}y, \rho^{k\alpha})},\quad
	f_{k}(y):=\frac{\rho^{k(1-\alpha)}}{\Phi(\rho^{k}y, \rho^{k\alpha})}f(\rho^{k}y),\\
	g_{k}(y)&:=\frac{g(\rho^{k}y)-l_{k}(\rho^{k}y)}{\rho^{k(1+\alpha)}},\quad
	\phi_{k}(y'):=\rho^{-k}\phi(\rho^{k}y') \text{ and }
	\xi_{k}:=\rho^{-k\alpha}b_{k}.
\end{align*}
It can be easily checked that 
\begin{enumerate}
\item $F_{k}$ satisfies \ref{a1} with the same constants $(\lambda, \Lambda)$;

\item $\Phi_{k}$ satisfies \ref{a2} with the same constants $(i(\Phi), s(\Phi))$ and $\Phi_{k}(y,1) \equiv 1$;

\item $\|u_{k}\|_{L^{\infty} (B_{1} \cap \{y_{n} > \phi_{k}(y') \} ) } \leq 1$ by the induction hypothesis;

\item $\|f_{k}\|_{L^{\infty} (B_{1}\cap \{y_{n} > \phi_{k}(y') \})} \leq L \varepsilon_{0} \rho^{k(1-\alpha(1+s(\Phi)))} \leq L \varepsilon_{0}$;

\item $\|D^{2}\phi_{k}\|_{L^{\infty} (B_{1}\cap \{y_{n} > \phi_{k}(y') \})}  \leq \rho^{k}\|D^{2}\phi\|_{L^{\infty} (B_{1}\cap \{y_{n} > \phi(y') \})} \leq \|D^{2}\phi\|_{L^{\infty} (B_{1}\cap \{y_{n} > \phi(y') \})}$.
\end{enumerate}
Moreover, for $g_{k}$, we can compute
\begin{align*}
|Dg_{k}(y)-Dg_{k}(z)|=\rho^{-k \alpha} |Dg(\rho^{k}y)-Dg(\rho^{k}z)| &\leq \|g\|_{C^{1, \beta_{g}}(\partial \Omega) } \cdot \rho^{-k \alpha} |\rho^{k}(y-z)|^{\beta_{g}} \\
&\leq \|g\|_{C^{1, \beta_{g}}(\partial \Omega) } \cdot |y-z|^{\beta_{g}}.
\end{align*}
By recalling the fact that $u=g$ on $\partial \Omega$ together with the induction hypothesis, we can conclude that
\begin{align*}
\|g_{k}\|_{C^{1, \beta_{g}}(B_{1}\cap \{y_{n} > \phi_{k}(y') \} )} \leq \|g\|_{C^{1, \beta_{g}}(\partial \Omega) } \leq 1.
\end{align*}
Hence, we now apply \Cref{approxlem} for $u_{k}$ and then follow the argument in the initial step to ensure the existence of an affince function $\overline{l}(y):=\overline{a}+\overline{b} \cdot y$ such that
\begin{align*}
\sup_{y \in B_{\rho} \cap \{y_n > \phi_{k}(y')\}}|u_{k}(y)-\overline{l}(y)| \leq \rho^{1+\alpha} \quad \text{and} \quad |\overline{a}|, |\overline{b}| \leq C_{0}.
\end{align*}
By scaling back, we conclude that
\begin{align*}
\sup_{y \in B_{\rho^{k+1}} \cap \{y_n > \phi(y')\}}|u(y)-l_{k+1}(y)| \leq \rho^{(k+1)(1+\alpha)},
\end{align*}
where
\begin{align*}
l_{k+1}(y):=l_{k}(y)+\rho^{k(1+\alpha)}\cdot\overline{l}(\rho^{-k}y).
\end{align*}
Here note that
\begin{align*}
|a_{k+1}-a_{k}|&=\rho^{k(1+\alpha)}|\overline{a}| \leq C_{0}\rho^{k(1+ \alpha)},\\
|b_{k+1}-b_{k}|&=\rho^{k\alpha}|\overline{b}| \leq C_{0}\rho^{k\alpha}.
\end{align*}
Therefore, \ref{e1} and \ref{e2} hold for $k+1$.
\end{enumerate}
\end{proof}

\begin{proof}[Proof of \Cref{thm:main}]
	We first consider the degenerate case, i.e., $i(\Phi) \geq 0$. By \Cref{keylemma}, a viscosity solution can be approximated by an affine function with an error of order $r^{1+\alpha}$. By following the argument in \cite{BPRT1}, we can derive the desired global $C^{1, \alpha}$-estimate provided that $i(\Phi) \geq 0$.
	
	On the other hand, for the singular case ($i(\Phi) <0$), we employ the idea of \cite{BBLL1}. Indeed, we claim that \Cref{keylemma} still holds for the singular case. \Cref{basicreg} guarantees that 
	\begin{align*}
	 	\|u\|_{C^{0,1}(B_{3/4}\cap \{y_{n}>\phi(y')\})} \leq c,
	\end{align*}
	for a universal constant $c>0$. Then $u$ is a viscosity solution of 
	\begin{align*}  
	\left\{ \begin{array}{ll} 
	\tilde{\Phi}(y, |Du|)F(D^{2}u)=\tilde{f}(y) & \text{in $B_{3/4} \cap \{y_n>\phi(y')\}$},\\
	u(y)=g(y) & \text{on $B_{3/4} \cap \{y_n=\phi(y')\}$},
	\end{array} \right.
	\end{align*}
where
\begin{align*}
	\tilde{\Phi}(y,t):=t^{-i(\Phi)}\Phi(y, t) \quad \text{and} \quad
	\tilde{f}(y):=|Du|^{-i(\Phi)}f(y).
\end{align*}
Here $\tilde{\Phi}$ satisfies the condition \ref{a2} with $i(\tilde{\Phi})=0$, $s(\tilde{\Phi})=s(\Phi)-i(\Phi)$, and 
\begin{align*}
\|\tilde{f}\|_{L^{\infty}(\Omega)} \leq c^{-i(\Phi)} \varepsilon_{0}.
\end{align*}
Thus, one can repeat the argument in the proof of \Cref{keylemma} to obtain the global $C^{1, \alpha}$-estimate.
\end{proof}

\section{Comparison principle and Perron's method}\label{Perron}
The purpose of this section is to study the classical result of comparison principle and as a consequence, to deduce the existence of a viscosity solution to \eqref{me} by Perron's method. Nevertheless, the assumptions \ref{a1}-\ref{a4} are not sufficient to obtain the aforementioned results. Therefore, we require an additional assumption \ref{a5} which guarantees the comparison principle for approximated Dirichlet problems. Before we precisely state this new assumption, we summarize known results regarding the comparison principle.

\begin{remark}[Comparison principle]
	Let $H : \mathbb{R}^{n} \times \mathbb{R} \times \mathbb{R}^{n} \times \mathcal{S}^{n} \to \mathbb{R}$ be a \textit{proper} map. In other words, $H$ satisfies
	\begin{align}\label{proper}
	H(x, r, p, X) &\leq H(x,r,p,Y) \quad \text{whenever $X \leq Y$},\\
	H(x, r, p, X) &\leq H(x,s,p,X) \quad \text{whenever $s \leq r$.}
	\end{align} 
	Then we say that $H$ satisfies the \textit{comparison principle} if the following holds:

Let $v \in \mathrm{USC}(\overline{\Omega})$ [resp. $w \in \mathrm{LSC}(\overline{\Omega})$] be a subsolution [resp. supersolution] of $H=0$ in $\Omega$ and $v \leq w$ on $\partial \Omega$. Then $v \leq w$ in $\overline{\Omega}$.

We refer to \cite{BD1, CIL1, GT1, KK1, KK2} for several sufficient conditions of then comparison principle. In short, $H$ satisfies the comparison principle if  $H(x, r, p, X)$ is independent of $x$, and one of the following conditions holds:
\begin{enumerate}[(i)]
\item $H(x, r, p, X)$ is strictly decreasing in $r$ and $H$ is degenerate elliptic (i.e., $H$ satisfies \eqref{proper}), or

\item $H(x, r, p, X)$ is nonincreasing in $r$ and $H$ is uniformly elliptic.
\end{enumerate}
It is noteworthy that the condition that $H$ is independent of $x$ can be relaxed to some extra structure conditions on $H$, which display a kind of smoothness on $H$ with respect to $x$-variable; see \cite{CIL1, GT1, KK1} for details.
\end{remark}

We now consider a proper map 
\begin{align*}
H(x, u, Du, D^{2}u):=\Phi(x, |Du|)F(D^{2}u)-f(x)
\end{align*}
with \ref{a1}-\ref{a4}. 
It is easily checked that $H$ is degenerate elliptic, nonincreasing in $r$, and $H$ depends on $x$. In view of the previous remark, we cannot expect the comparison principle for $H$ and Perron's method for the associated Dirichlet problem. To overcome this challenge, we will impose an additional structure condition on $H$ and approximate the map $H$ to ensure the strict monotonicity with respect to $r$-variable:
\begin{enumerate}[(i)]
\item 
\begin{description}
	\item[(A5)\label{a5}] There exists a continuous function $\omega : [0, \infty) \to [0, \infty)$ such that $\omega(0)=0$ and 
	\begin{align*}
	\Phi(x, \alpha |x-y|)F(X)-\Phi(y, \alpha |x-y|)F(-Y) \leq \omega(\alpha|x-y|^{2}+|x-y|),
	\end{align*}
	whenever $\alpha>0$, $x, y \in \Omega$, $X, Y \in \mathcal{S}^{n}$ and
	\begin{align}
		- 3\alpha
		\begin{pmatrix}
			I & 0 \\
			0 & I
		\end{pmatrix}
		\leq
		\begin{pmatrix}
			X & 0 \\
			0 & Y
		\end{pmatrix}
		\leq
		3\alpha
		\begin{pmatrix}
			I & -I \\
			-I & I
		\end{pmatrix}.
	\end{align} 
\end{description}
Similar conditions with \ref{a5} can be found in \cite{BD1, CIL1}. 

\item Let us consider the approximated problem given by
	\begin{align}
		\label{AE:1}
		H_{\varepsilon}(x, u, Du, D^{2}u):= \Phi(x,|Du|)F(D^2u)-f(x) - \varepsilon u= 0\quad\text{in}\quad \Omega
	\end{align}
	for $\varepsilon>0$. Clearly, $H_{ \varepsilon}$ is strictly decreasing in $r$.
\end{enumerate}

\begin{lemma}[Comparison principle II]
	\label{thm_CP}
	Suppose that the assumptions \ref{a1}, \ref{a2}, \ref{a5} are in force and $f\in C(\overline{\Omega})$. Then $H_{ \varepsilon}$ satisfies the comparison principle: 
	
Let $v$ and $w$ be a viscosity subsolution and a supersolution of \eqref{AE:1}, respectively. If $v\leq w$ on $\partial\Omega$, then $v\leq w$ in $\Omega$.
\end{lemma}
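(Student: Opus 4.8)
The plan is to run the standard doubling-of-variables argument from the user's guide to viscosity solutions \cite{CIL1}, with the extra structure condition \ref{a5} playing exactly the role it was designed for. First I would argue by contradiction: assuming $\sup_{\overline{\Omega}}(v-w)=:2m>0$, I localize the problem. Since $v-w\le 0$ on $\partial\Omega$, the sup is attained at an interior point, and after possibly replacing $v$ by $v-\tfrac{\eta}{2}m$ (or inserting a small perturbation) one may assume the maximum is strict and attained well inside $\Omega$; this localization avoids any boundary complications. For $\alpha>0$ large, introduce the penalized function
\begin{align*}
\Psi_\alpha(x,y):=v(x)-w(y)-\frac{\alpha}{2}|x-y|^2
\end{align*}
on $\overline{\Omega}\times\overline{\Omega}$, and let $(x_\alpha,y_\alpha)$ be a maximum point. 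The classical lemma on penalization gives $\alpha|x_\alpha-y_\alpha|^2\to 0$, $x_\alpha,y_\alpha\to \hat x$ (an interior maximum point of $v-w$), and $\Psi_\alpha(x_\alpha,y_\alpha)\to m>0$ as $\alpha\to\infty$; in particular $x_\alpha,y_\alpha$ stay in a fixed compact subset of $\Omega$ for $\alpha$ large.

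Next I would invoke the Theorem on Sums \cite[Theorem 3.2]{CIL1}: there exist $X,Y\in\mathcal{S}^n$ with $(\alpha(x_\alpha-y_\alpha),X)\in \overline{J}^{2,+}v(x_\alpha)$ and $(\alpha(x_\alpha-y_\alpha),-Y)\in \overline{J}^{2,-}w(y_\alpha)$ satisfying precisely the matrix inequality displayed in \ref{a5} (with $p:=\alpha(x_\alpha-y_\alpha)$). Writing the subsolution and supersolution inequalities for $H_\varepsilon$ at these jets, and using $v(x_\alpha)-w(y_\alpha)\ge \Psi_\alpha(x_\alpha,y_\alpha)\to m$, gives after subtracting
\begin{align*}
\varepsilon m \le \Phi(x_\alpha,|p|)F(X)-\Phi(y_\alpha,|p|)F(-Y)+\big(f(x_\alpha)-f(y_\alpha)\big)+o(1).
\end{align*}
Now apply \ref{a5} with the identification $\alpha|x-y|=|p|$: since $X,Y$ satisfy the matrix condition of \ref{a5} (note $\alpha(x_\alpha-y_\alpha)$ is exactly $p$, and the bound $-3\alpha I\le\operatorname{diag}(X,Y)\le 3\alpha\begin{psmallmatrix}I&-I\\-I&I\end{psmallmatrix}$ follows from \cite[Theorem 3.2]{CIL1}), we get
\begin{align*}
\Phi(x_\alpha,|p|)F(X)-\Phi(y_\alpha,|p|)F(-Y)\le \omega\big(\alpha|x_\alpha-y_\alpha|^2+|x_\alpha-y_\alpha|\big).
\end{align*}
Combined with uniform continuity of $f$ on $\overline{\Omega}$ and $|x_\alpha-y_\alpha|\to 0$, $\alpha|x_\alpha-y_\alpha|^2\to 0$, the right-hand side tends to $0$, so letting $\alpha\to\infty$ yields $\varepsilon m\le 0$, contradicting $\varepsilon>0$ and $m>0$. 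Hence $v\le w$ in $\Omega$.

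The one technical point that needs care — and which I expect to be the main obstacle — is matching the hypothesis of \ref{a5} to the output of the Theorem on Sums: one must verify that the specific matrix inequality produced by \cite[Theorem 3.2]{CIL1} for the quadratic penalization $\tfrac{\alpha}{2}|x-y|^2$ is exactly (or implies) the one in \ref{a5}, and that the argument $|p|=\alpha|x_\alpha-y_\alpha|$ plugged into $\Phi$ is the correct normalization so that the $\omega$-modulus absorbs both the $\Phi$-difference and degenerate/singular behavior of $\Phi$ near $|p|=0$ (the case $x_\alpha=y_\alpha$, where $p=0$, is trivial since then $v(x_\alpha)-w(y_\alpha)\le 0$). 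A secondary, routine point is justifying that the maximum of $\Psi_\alpha$ is interior for all large $\alpha$; this follows from the strict interior maximum of $v-w$ together with the uniform convergence $x_\alpha,y_\alpha\to\hat x\in\Omega$.
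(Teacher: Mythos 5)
Your proposal is correct and follows essentially the same route as the paper: contradiction, quadratic doubling of variables, the penalization lemma \cite[Lemma 3.1]{CIL1} to force interior maximum points with $\alpha|x_\alpha-y_\alpha|^2\to 0$, the Theorem on Sums to produce jets satisfying exactly the matrix inequality in \ref{a5}, and then \ref{a5} plus the strict monotonicity coming from the $\varepsilon u$ term to reach $\varepsilon m\le 0$. (Your parenthetical claim that $x_\alpha=y_\alpha$ forces $v(x_\alpha)-w(y_\alpha)\le 0$ is not right, but that case is harmless since \ref{a5} applies with $x=y$ as well, so the argument is unaffected.)
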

\begin{proof}
	By contradiction, we suppose that 
	\begin{align*}
	L_0:= \sup\limits_{x\in \overline{\Omega}}(v(x)-w(x))>0.
	\end{align*}
	For any $\alpha>0$, we define 
	\begin{align*}
		L_{\alpha}:= \sup\limits_{x,y\in\overline{\Omega}}\left[v(x)-w(y)-(\alpha/2)|x-y|^{2} \right]
	\end{align*}
	and clearly $L_{\alpha}\geq L_0$. Suppose that the maximum $L_{\alpha}$ is attained at a point $(x_{\alpha},y_{\alpha})\in \overline{\Omega}\times \overline{\Omega}$. It implies from \cite[Lemma 3.1]{CIL1} that 
	\begin{align*}
		\lim\limits_{\alpha\to \infty } \alpha |x_{\alpha}-y_{\alpha}|^{2} = 0.
	\end{align*}
	This one and the fact that $v \leq w$ on $\partial\Omega$ yield that $x_{\alpha},y_{\alpha}\in \Omega$ for $\alpha>0$ large enough. At this moment, we are able to apply Ishii-Lions lemma, \cite[Theorem 3.2]{CIL1}, to ensure that there exist a limiting super-jet $\left( \alpha(x_{\alpha}-y_{\alpha}), X_{\alpha} \right)$ of $v$ at $x_{\alpha}$ and a limiting sub-jet $\left( \alpha(x_{\alpha}-y_{\alpha}), -Y_{\alpha} \right)$ of $w$ at $y_{\alpha}$ so that 
	\begin{align*}	
		- 3\alpha
		\begin{pmatrix}
			I & 0 \\
			0 & I
		\end{pmatrix}
		\leq
		\begin{pmatrix}
			X_{\alpha} & 0 \\
			0 & Y_{\alpha}
		\end{pmatrix}
		\leq
		3\alpha
		\begin{pmatrix}
			I & -I \\
			-I & I
		\end{pmatrix}
	\end{align*}
	and 
	\begin{align*}
		\begin{cases}
			\Phi
\left(x_{\alpha}, \alpha|x_{\alpha}-y_{\alpha}| \right)
F(X_{\alpha}) - \varepsilon v(x_{\alpha}) 
			\geq f(x_{\alpha}), \\
			\Phi
\left(y_{\alpha}, \alpha|x_{\alpha}-y_{\alpha}| \right)
F(-Y_{\alpha}) - \varepsilon w(y_{\alpha}) 
			\leq f(y_{\alpha}).
		\end{cases}
	\end{align*}
By using the relation $L_{0}=\lim_{\alpha \to \infty}[v(x_{\alpha})-w(y_{\alpha})]$ and the assumption \ref{a5}, we have, for sufficiently large $\alpha>0$,
\begin{align*}
 	\frac{ \varepsilon L_{0}}{2} &\leq \varepsilon [v(x_{\alpha})-w(y_{\alpha})]\\ &\leq f(y_{\alpha})-f(x_{\alpha})+\Phi\left(x_{\alpha}, \alpha|x_{\alpha}-y_{\alpha}| \right)
F(X_{\alpha})-\Phi\left(y_{\alpha}, \alpha|x_{\alpha}-y_{\alpha}| \right)
F(-Y_{\alpha})\\
&\leq f(y_{\alpha})-f(x_{\alpha})+\omega(\alpha|x_{\alpha}-y_{\alpha}|^{2}+|x_{\alpha}-y_{\alpha}|).
\end{align*}
Since $f\in C(\overline{\Omega})$ and $\omega(0+)=0$, we arrive at a contradiction when $\alpha \to \infty$.
\end{proof}

%%%%%%%%%%%%%%%%%%%%%%%%%%%%%%%%%%%%%%%%%%%%%%%%%%%%%%%%%%%%%%%%%%%%%%%%%%%%%%%%%%%%
%%%%%%%%%%%%%%%%%%%%%%%%%%%%%%%%%%%%%%%%%%%%%%%%%%%%%%%%%%%%%%%%%%%%%%%%%%%%%%%%%%%%%
%%%%%%%%%%%%%%%%%%%%%%%%%%%%%%%%%%%%%%%%%%%%%%%%%%%%%%%%%%%%%%%%%%%%%%%%%%%%%%%%%%%%%
%%%%%%%%%%%%%%%%%%%%%%%%%%%%%%%%%%%%%%%%%%%%%%%%%%%%%%%%%%%%%%%%%%%%%%%%%%%%%%%%%%%%%%%%%%%%%%%%%%%%%%%%%%%%%%%%%%%%%%%%%%%%%%%%%%%%%%%%%%%%%%%%%%%%%%%%%%%%%%%%%%%%%%%%
%%%%%%%%%%%%%%%%%%%%%%%%%%%%%%%%%%%%%%%%%%%%%%%%%%%%%%%%%%%%%%%%%%%%%%%%%%%%%%%%%%%%%

We now turn our attention to showing the existence of a viscosity solution to \eqref{AE:1}.

\begin{lemma}[Existence of sub/supersolutions]
	\label{thm_exist}
	Suppose the assumptions \ref{a1}-\ref{a4} are in force. Then for every $\varepsilon\in (0,1)$, there exist a viscosity subsolution $v_{\varepsilon}\in C(\overline{\Omega})$ and a viscosity supersolution $w_{\varepsilon}\in C(\overline{\Omega})$ of \eqref{AE:1} with $v_{\varepsilon}= w_{\varepsilon} = g$ on $\partial\Omega$.  Moreover, there exists a positive constant $c \equiv c(n, \lambda, \Lambda, \nu_0, L, r, \mathrm{diam}(\Omega), \|f\|_{L^{\infty}(\Omega)}, \|g\|_{L^{\infty}(\partial \Omega)})$ such that
	\begin{align*}
	-c \leq v_{ \varepsilon} \leq w_{ \varepsilon} \leq c, \quad \text{for any $0< \varepsilon<1$.}
	\end{align*}
\end{lemma}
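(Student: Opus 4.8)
The plan is to build explicit barrier functions that serve as a subsolution and a supersolution of \eqref{AE:1}, with the correct boundary values, and then take appropriate maxima/minima to enforce $v_\varepsilon = w_\varepsilon = g$ on $\partial\Omega$. The key observation is that since $F$ is uniformly elliptic and $\Phi(x,\cdot)$ is bounded below near $t=1$ (condition \ref{a2}), functions of the form $y \mapsto A - B|y-z|^{2}$ (or more precisely, functions whose gradient stays in a fixed annulus where $\Phi$ is comparable to a power), composed with the distance function or built from the exterior/interior ball condition provided by \Cref{ballcond}, will have the right sign of $\Phi(x,|Dw|)F(D^{2}w) - f - \varepsilon w$. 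Concretely, using the exterior ball condition, for each $x_0 \in \partial\Omega$ one constructs a smooth function of the form $\psi(y) = g(x_0) + C_1\bigl(1 - e^{-C_2(|y - x_0^e|^2 - r^2)}\bigr)$ (or a polynomial variant), which is a classical supersolution of $\Phi(y,|Du|)F(D^2u) - f - \varepsilon u = 0$ near $x_0$ when $C_1, C_2$ are chosen large depending on $\data$; taking the infimum over $x_0 \in \partial\Omega$ of such translated barriers, together with a large constant, produces $w_\varepsilon$. The subsolution $v_\varepsilon$ is obtained symmetrically, or simply as $v_\varepsilon = -w_\varepsilon$ after adjusting signs of the data.

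First I would fix $\varepsilon \in (0,1)$ and record that, by \ref{a2}, $\Phi(x,t) \geq \nu_0 L^{-1}\min\{t^{i(\Phi)}, t^{s(\Phi)}\}$ and $\Phi(x,t) \leq \nu_1 L \max\{t^{i(\Phi)}, t^{s(\Phi)}\}$, so on any fixed gradient annulus $\{c^{-1} \leq |p| \leq c\}$ the coefficient $\Phi$ is bounded above and below by constants depending only on $\data$. Next, using the interior and exterior ball conditions from \Cref{ballcond} with radius $r$, for each boundary point I would write down a concrete smooth barrier $\Theta_{x_0}^{\pm}$ supported in a neighborhood of $x_0$ whose gradient is controlled in such an annulus (this is where the ball condition is essential — it keeps $|D\Theta|$ bounded below away from zero and bounded above), and verify by a direct computation that $\Phi(y,|D\Theta_{x_0}^{-}|)F(D^2\Theta_{x_0}^{-}) - f(y) - \varepsilon \Theta_{x_0}^{-} \geq 0$ in the viscosity sense, with $\Theta_{x_0}^{-}(x_0) = g(x_0)$ and $\Theta_{x_0}^{-} \leq g$ on $\partial\Omega$. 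Then $v_\varepsilon := \sup_{x_0 \in \partial\Omega} \Theta_{x_0}^{-}$ is a viscosity subsolution (the supremum of subsolutions is a subsolution, by the standard stability result \cite{CIL1}), is continuous up to $\overline{\Omega}$, and equals $g$ on $\partial\Omega$; symmetrically $w_\varepsilon := \inf_{x_0} \Theta_{x_0}^{+}$. That $v_\varepsilon \leq w_\varepsilon$ follows either from \Cref{thm_CP} applied on $\Omega$ (both have boundary data $g$), or directly from the construction. The uniform bound $-c \leq v_\varepsilon \leq w_\varepsilon \leq c$ independent of $\varepsilon \in (0,1)$ comes from the fact that the barrier constants $C_1, C_2$ can be chosen independently of $\varepsilon$ (the term $-\varepsilon u$ only helps the supersolution inequality and only hurts the subsolution inequality by an amount $\lesssim \varepsilon \|v_\varepsilon\|_\infty$, which is absorbed since $\varepsilon < 1$ by slightly enlarging the constants), or alternatively from a direct application of the ABP estimate \Cref{thm_ABP} to $v_\varepsilon$ and $w_\varepsilon$ viewed as sub/supersolutions with the lower-order term moved to the right-hand side.

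The main obstacle I anticipate is the explicit barrier computation in the degenerate/singular setting: one must choose the barrier so that its gradient never vanishes in the region where the differential inequality is tested (otherwise $\Phi(y,|Dw|)$ could degenerate or blow up and the sign of the expression becomes uncontrolled), while simultaneously dominating $g$ on the boundary and matching the value $g(x_0)$ at the contact point. The $C^{1,1}$-regularity of $\partial\Omega$ — equivalently, the uniform ball condition of \Cref{ballcond} — is exactly what makes this possible: the exterior/interior touching balls give a concrete center $x_0^e$ and radius $r$ so that a radial profile in $|y - x_0^e|$ has gradient of size comparable to $1/r$ near $\partial\Omega$, hence bounded away from $0$ and $\infty$ by constants depending only on $r$ and the neighborhood size. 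A secondary technical point is verifying the viscosity (rather than classical) sub/supersolution property of the sup/inf of the barriers; this is routine given the standard stability theorems for viscosity solutions and the fact that each $\Theta_{x_0}^{\pm}$ is $C^2$ where it matters. Once these barriers are in hand, continuity of $v_\varepsilon, w_\varepsilon$ on $\overline{\Omega}$ and the equality on $\partial\Omega$ are immediate, and the $\varepsilon$-uniform bound follows by tracking constants.
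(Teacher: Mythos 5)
Your overall strategy (radial barriers built from the exterior ball condition, an infimum over boundary points, comparison to order the two envelopes) is the same in spirit as the paper's, but two steps in your construction have genuine gaps. First, the requirement that each barrier satisfy $\Theta^{-}_{x_0}(x_0)=g(x_0)$ \emph{and} $\Theta^{-}_{x_0}\leq g$ on all of $\partial\Omega$ is not achievable for a fixed radial profile in $|y-x_0^{e}|$: at the tangency point the gradient of such a barrier is purely normal, so along the $C^{1,1}$ boundary it deviates from its value at $x_0$ only quadratically in $|y-x_0|$, whereas $g$ (merely Lipschitz, or $C^{1,\beta_g}$ with nonzero tangential gradient) varies linearly; no choice of $C_1,C_2$ fixes this. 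The paper circumvents exactly this by using the shifted family $v_{z,\delta}=g(z)+\delta+M_{\delta}v_{z}$, where the slack $\delta>0$ absorbs the modulus of continuity of $g$ near $z$ and $M_{\delta}$ handles the rest of $\partial\Omega$; the exact boundary value $g$ is only recovered after taking the infimum over $\delta\in(0,1)$ as well as over $z$. Second, your patching ``together with a large constant'' is incompatible with the claimed $\varepsilon$-uniform bound: a constant $c$ is a viscosity supersolution of \eqref{AE:1} only if $\varepsilon c\geq \|f^{-}\|_{L^{\infty}}$ (test with a function having vanishing gradient and Hessian at the touching point, so the left-hand side is $0$), i.e. $c\gtrsim 1/\varepsilon$. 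The patching is forced on you because your exponential barrier is a supersolution only near $x_0$: away from $x_0$ its gradient and Hessian decay together, so $\Phi(y,|D\psi|)F(D^{2}\psi)$ cannot stay below $-\|f\|_{\infty}$ there. The paper needs no patching: the barrier $v_z(x)=K\left(r^{-\kappa_0}-|x-x_z|^{-\kappa_0}\right)$ with $\kappa_0=(n\Lambda+1)/\lambda$ has $F(D^{2}v_z)\leq -K\kappa_0|x-x_z|^{-\kappa_0-2}$ and gradient bounded below by a positive multiple of $K$ on \emph{all} of $\Omega$, and the decisive computation is the scaling $\Phi(x,|Dv_{z,\delta}|)\,|F(D^{2}v_{z,\delta})|\gtrsim K^{1+i(\Phi)}$ with $1+i(\Phi)>0$, so one single choice of $K$ (independent of $\varepsilon<1$, since $-\varepsilon v_{z,\delta}\leq \varepsilon\|g\|_{L^{\infty}(\partial\Omega)}$) beats $\|f\|_{\infty}+\|g\|_{\infty}$ throughout $\Omega$.

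Your ``gradient in a fixed annulus'' heuristic does not substitute for this last computation: in the singular range $-1<i(\Phi)<0$ a large gradient makes $\Phi$ small, and the whole point is that the Hessian term grows in $K$ faster than the coefficient decays, which is precisely the exponent $1+i(\Phi)>0$ from \ref{a2}; if instead you keep the gradient bounded above, the barrier cannot be a global supersolution in $\Omega$ and you are back to the patching problem. Also, your fallback of deducing the uniform bound from \Cref{thm_ABP} with $\varepsilon u$ moved to the right-hand side is circular in the singular case: the ABP bound involves $\|\cdot\|^{1/(1+i(\Phi))}$ with exponent larger than $1$, so the term $\varepsilon\|u\|_{\infty}$ cannot be absorbed. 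The remaining ingredients of your plan (inf/sup of the barrier families, continuity and boundary values, ordering $v_{\varepsilon}\leq w_{\varepsilon}$ via \Cref{thm_CP}) do match the paper once the barrier family is corrected as above.
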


\begin{proof}
	 Let $z\in\partial\Omega$ be a fixed point. There exists a point $x_{z}\in\R^{n}\setminus\overline{\Omega}$ such that $\overline{B_{r}(x_{z})}\cap \overline{\Omega} = \{z\}$ with $r = |z-x_{z}|$ since $\Omega$ satisfies a uniform exterior sphere property; see \Cref{ballcond}. We now consider a function $v_{z} : \overline{\Omega}\rightarrow [0,\infty)$ defined by 
	\begin{align*}
		v_{z}(x) := K \left( r^{-\kappa_{0}} - |x-x_{z}|^{-\kappa_{0}} \right) 
	\end{align*}
	for positive constants $\kappa_{0}:=\frac{n\Lambda+1}{\lambda}$ and $K \geq \min\left\{1, \frac{R^{\kappa_0+1}}{\kappa_0}\right\}$ to be determined later, with $R:=r+\mathrm{diam}(\Omega)$. Note that $v_{z}(z)=0$, $v_{z}>0$ in $\Omega$, and direct calculations yield that
	\begin{align*}
		Dv_{z}(x) = K\kappa_{0}\frac{x-x_{z}}{|x-x_{z}|^{\kappa_{0}+2}}
	\end{align*}
	and 
	\begin{align*}
		D^{2}v_{z}(x) = K\kappa_{0}\frac{I}{|x-x_{z}|^{\kappa_{0}+2}} - K\kappa_{0}(\kappa_{0}+2)\frac{(x-x_{z})\otimes (x-x_{z})}{|x-x_{z}|^{\kappa_{0}+4}}.
	\end{align*}
	Due to the choice of $\kappa_0$, we have
	\begin{align*}
		F(D^2v_z(x)) \leq  \frac{K\kappa_0}{|x-x_z|^{\kappa_0+2}}\left((n-1)\Lambda-(\kappa_0+1)\lambda\right) \leq -\frac{K\kappa_0}{|x-x_z|^{\kappa_0+2}}.
	\end{align*}
	On the other hand, for a fixed $\delta \in (0,1)$, we further define 
	\begin{align*}
		v_{z, \delta}(x):=g(z)+\delta+M_{\delta}v_{z}(x),
	\end{align*}
	where the constant $M_{\delta} \geq 1$ can be chosen so that $v_{z, \delta} \geq g$ on $\partial \Omega$. This is possible because $K \geq 1$ and $g$ is continuous on $\partial \Omega$. Indeed, $M_{\delta}$ depends only on the modulus of continuity of $g$, and is independent of $z$. Then we see that 
	\begin{align*}
		&\Phi(x, |Dv_{z,\delta}(x)|)F(D^{2}v_{z,\delta}(x))- \varepsilon v_{z,\delta}(x) \notag\\&
		\leq
		\frac{\nu_0}{L}\min\left\{ \left( \frac{K\kappa_{0}}{|x-x_{z}|^{\kappa_0 +1}} \right)^{i(\Phi)}, \left(\frac{K\kappa_{0}}{|x-x_{z}|^{\kappa_0 +1}} \right)^{s(\Phi)} \right\} \cdot \left(-M_{\delta}\frac{K\kappa_{0}}{ |x-x_{z}|^{\kappa_0 + 2}}\right)+\varepsilon \norm{g}_{L^{\infty}(\partial\Omega)}.
	\end{align*}
Here if we let $A:=\frac{K\kappa_{0}}{|x-x_{z}|^{\kappa_0 +1}}$, then 
	\begin{align*}
		\min\left\{A^{i(\Phi)}, A^{s(\Phi)} \right\}&=A^{-1} \min\left\{A^{i(\Phi)+1}, A^{s(\Phi)+1} \right\}\\
		&\geq \left(\frac{K\kappa_0}{r^{\kappa_0+1}}\right)^{-1} \min\left\{\left(\frac{K\kappa_0}{R^{\kappa_0+1}}\right)^{i(\Phi)+1}, \left(\frac{K\kappa_0}{R^{\kappa_0+1}}\right)^{s(\Phi)+1} \right\}.
	\end{align*}
	Note that we need to select $K$ so that $\frac{K\kappa_0}{R^{\kappa_0+1}} \geq 1$. Since $0 < i(\Phi)+1 \leq s(\Phi)+1$, we conclude that
	\begin{align*}
		\min\left\{A^{i(\Phi)}, A^{s(\Phi)} \right\} \geq \left(\frac{\kappa_0}{r^{\kappa_0+1}}\right)^{-1} \left(\frac{\kappa_0}{R^{\kappa_0+1}}\right)^{i(\Phi)+1}K^{i(\Phi)}.
	\end{align*}
Hence, we deduce that
\begin{align*}
	\Phi(x, |Dv_{z,\delta}(x)|)&F(D^{2}v_{z,\delta}(x))- \varepsilon v_{z,\delta}(x)\\
	&\leq -\frac{\nu_0}{L} \left(\frac{\kappa_0}{r^{\kappa_0+1}}\right)^{-1} \left(\frac{\kappa_0}{R^{\kappa_0+1}}\right)^{i(\Phi)+1}\frac{\kappa_0}{R^{\kappa_0+2}}K^{i(\Phi)+1}+ \norm{g}_{L^{\infty}(\partial\Omega)}. 
\end{align*}
Therefore, we can choose $K=K(n, \lambda, \Lambda, \nu_0, L, r, \mathrm{diam}(\Omega), \|f\|_{L^{\infty}(\Omega)}, \|g\|_{L^{\infty}(\partial \Omega)})$ large enough so that
\begin{align*}
	\Phi(x, |Dv_{z,\delta}(x)|)&F(D^{2}v_{z,\delta}(x))- \varepsilon v_{z,\delta}(x) \leq -\|f\|_{L^{\infty}(\Omega)} \quad \text{in $\Omega$},
\end{align*}
i.e., $v_{z, \delta}$ is a viscosity supersolution to \eqref{AE:1}.

Finally, we define 
\begin{align*}
	w_{\varepsilon}(x):=\inf\left\{v_{z, \delta}(x) \text{: $z \in \partial \Omega$ and $\delta \in (0,1)$} \right\}.
\end{align*}
It is easy to check that $w_{\varepsilon}$ is a viscosity supersolution to \eqref{AE:1} in $\Omega$ and enjoys the boundary condition $w_{\varepsilon}=g$ on $\partial \Omega$. Moreover, it immediately follows from the construction of $w_{ \varepsilon}$ that 
\begin{align*}
w_{\varepsilon} \leq C(n, \lambda, \Lambda, \nu_0, L, \mathrm{diam}(\Omega), \|f\|_{L^{\infty}(\Omega)}, \|g\|_{L^{\infty}(\partial \Omega)}) \quad \text{in $\Omega$},
\end{align*}
for any $ \varepsilon \in (0,1)$. The existence of a viscosity subsolution $v_{\varepsilon}$ and its lower bound can be shown in a similar argument. Finally, since $v_{ \varepsilon}=w_{ \varepsilon}=g$ on $\partial \Omega$, \Cref{thm_CP} implies that $v_{ \varepsilon} \leq w_{ \varepsilon}$ in $\Omega$.
\end{proof}

\begin{proof}[Proof of \Cref{thm:mthm1}]
	An application of Perron's method \cite[Theorem 4.1]{CIL1} together with \Cref{thm_CP} and \Cref{thm_exist} yields the existence of a viscosity solution $u_{\varepsilon}$ to the approximated equation \eqref{AE:1} with the boundary condition $u_{\varepsilon}=g$.
	
We now understand $u_{ \varepsilon}$ as a viscosity solution of 
\begin{align*}
		\left\{\begin{array}{rclcc}
		\Phi(x,|Du_{ \varepsilon}|)F(D^2u_{ \varepsilon}) &=& f_{ \varepsilon}(x) & \mbox{in }& \O, \\
		u_{ \varepsilon}(x)&=&g(x) & \mbox{on }& \partial\O,
		\end{array}\right.	  
\end{align*}
where $f_{ \varepsilon}(x):=f(x)+ \varepsilon u_{ \varepsilon}(x)$. Here note that $\{f_{ \varepsilon}\}_{ \varepsilon \in (0,1)}$ is uniformly bounded in $L^{\infty}(\Omega)$ by \Cref{thm_exist}. Then by applying \cite[Lemma 3.1]{BBLL1} and \Cref{basicreg} (or just by applying the stronger result \Cref{thm:main}), we have that $\{u_{ \varepsilon}\}_{ \varepsilon \in (0,1)}$ is uniformly bounded in $C^{0, \gamma}(\overline{\Omega})$ for some $\gamma \in (0,1)$. Therefore, we can extract a uniformly converging subsequence such that $u_{ \varepsilon_{j}} \to u_{\infty}$ when $ \varepsilon_{j} \to 0$, and by the standard stability argument, we conclude that $u_{\infty}$ solves \eqref{me}.
\end{proof}
%%%%%%%%%%%%%%%%%%%%%%%%%%%%%%%%%%%%%%%%%%%%%%%%%%%%%%%%%%%%%%%%%%%%%%%%%%%%%%%%%%%%%
%%%%%%%%%%%%%%%%%%%%%%%%%%%%%%%%%%%%%%%%%%%%%%%%%%%%%%%%%%%%%%%%%%%%%%%%%%%%%%%%%%%%%%%%%%%%%%%%%%%%%%%%%%%%%%%%%%%%%%%%%%%%%%%%%%%%%%%%%%%%%%%%%%%%%%%%%%%%%%%%%%%%%%%%
%%%%%%%%%%%%%%%%%%%%%%%%%%%%%%%%%%%%%%%%%%%%%%%%%%%%%%%%%%%%%%%%%%%%%%%%%%%%%%%%%%%%%
%%%%%%%%%%%%%%%%%%%%%%%%%%%%%%%%%%%%%%%%%%%%%%%%%%%%%%%%%%%%%%%%%%%%%%%%%%%%%%%%%%%%%
%%%%%%%%%%%%%%%%%%%%%%%%%%%%%%%%%%%%%%%%%%%%%%%%%%%%%%%%%%%%%%%%%%%%%%%%%%%%%%%%%%%%

\vspace{0.2cm}

\noindent
{\bf Conflict of interest.}
The authors declare that they have no conflict of interest.

\vspace{0.2cm}

\noindent
{\bf Acknowledgments.} This work is supported by the National Research Foundation of Korea (NRF) grant funded by the Korea government (MSIP): NRF-2021R1A4A1027378. Se-Chan Lee is supported by Basic Science Research Program through the National Research Foundation of Korea (NRF) funded by the Ministry of Education (2022R1A6A3A01086546).

\bibliographystyle{amsplain}

\end{document}